\documentclass[11pt,a4paper]{article}%
\usepackage[centertags]{amsmath}
\usepackage{amsfonts}
\usepackage{amssymb}
\usepackage{amsthm}
\usepackage{epsfig}
\usepackage{setspace}
\usepackage{ae}
\usepackage{eucal}
\usepackage[usenames]{color}%
\setcounter{MaxMatrixCols}{30}%
\usepackage{graphicx}

\theoremstyle{plain}
\newtheorem{thm}{Theorem}[section]
\newtheorem{lem}[thm]{Lemma}

\theoremstyle{definition}
\newtheorem{defi}[thm]{Definition}

\theoremstyle{remark}

\newtheorem{rem}[thm]{Remark}

\begin{document}

\title{On the multivariate Burgers equation and the incompressible Navier-Stokes equation (part III)}
\author{J\"org Kampen }
\maketitle

\begin{abstract}
For the incompressible Navier-Stokes equation with data of polynomial decay we first obtain for given regular data at a time step $l$ time-local contraction results with respect to a $C^1\left( (l-1,l), H^{m}\right) $-norm.  For an extended controlled scheme with local limit functions $l\rightarrow v^{r,*,\rho,l}_i(l,.)=v^{*,\rho,l}_i(l,.)+r^l_i$ and growth control function $r^l_i$ we obtain a global linear upper bound and an absolute upper bound for the Leray projection term with respect to a $C^1\left( (0,l), H^{m}\right) $-norm, where the latter is independent the time step number $l$. In order to obtain the absolute global upper bound certain growth consumption functions are added in the definition of the dynamically defined control functions, while for the linear global upper bound it suffices to consider a simplified control function. Furthermore, the higher order correction terms of the local controlled scheme preserve some order of spatial polynomial decay. The schemes discussed in \cite{KNS} and \cite{K3} are simplified in the sense that the estimates are achieved without the use of some properties concerning the adjoint of local fundamental solutions with variable first or second order coefficient terms. We note that the pointwise and absolute convergence of the local functional series and their first order time derivatives and their spatial derivatives leads to a constructive approach for existence of local classical solutions and of global regular (smooth) solutions via the controlled scheme. We also introduce an extended scheme with small time foresight such that the existence of uniform global upper bounds can be proved and even long time behavior can be investigated. Furtherore the global implications for the uncontrolled Navier Stokes equation are stated explicitly, where for the simplest control function a global upper bound for the components of the velocity solution functions of the uncontrolled Navier Stokes equation in $C^1\left(\left[0,T\right] , H^m \right)$ for $m\geq 2$ and arbitrary time horizon $T>0$ is obtained which depends linearly on the time horizon $T$. We also discuss the relation to an alternative approach of auto-controlled schemes, i.e., schemes without external control function proposed in \cite{KAC} recently which underlines that the local contraction results are essential in order to obtain global results for the incompressible Navier Stokes equation.   
\end{abstract}



2010 Mathematics Subject Classification. 76D05, 76D03, 93C99.
\section{Introduction}
We start with some remarks about current research on existence and uniqueness of the Navier Stokes equation. Leray's notion of weak solutions in \cite{L}, his proof of the existence of global weak solutions, the technique of smoothing first order coefficients, and the conjecture of singular vorticity, are a framework for much of the following research. There are well-written surveys on this, and there is no need to repeat this here. However, we want to make some remarks concerning recent research, where we do not pretend to cover present research or to give a fair account. It seems that some researchers  still relate the concept of turbulence to the existence of  singular sets of vorticity, although the concept of singularity involves the concept of infinity, and for this reason a close link between both concepts seems to be suspicious even on an abstract level. Cantor introduced the modern concept of infinity as infinitum in abstracto in order to avoid the pretension of infinities in nature (especially local infinities). It is by no means obvious that the concept of singular behavior should be a notion of the concept of turbulence which is an empirical concept. If global existence of a regular solution and uniqueness holds for the incompressible Navier Stokes equation for some class of natural data with polynomial decay of a certain order (as we argue), and if this (or similar extensions with variable viscosity) is the right equation in order to study  the concept of turbulence (as we surmise), then other features of this equation and its solution may lead us to a theory of turbulence beyond the concept of Kolmogorov. Some relations between mathematical properties of equations and their solutions may be explored by simulations, but the existence of singularities can be neither proved or disproved by simulations which explore small parts of bounded intervals of rational numbers. It has been stated quite clearly by many researchers (notably by Constantin and Temam, cf. \cite{CFNT}, \cite{FJRT}, \cite{FJKT} and the references therein) that other dynamical properties such as the existence of global strange attractors, bifurcation behavior etc. may be no less important for turbulence than the question of uniqueness and global existence. Among the latter two properties physicists tend to prefer the former, since it tells us that the law described by the equation is deterministic, while the latter is something which a meaningful equation should have anyway. Some physicists may even say that an equation should have smooth global solutions in order to be meaningful at all. Global existence of smooth solutions can help to prove uniqueness, and time-local contraction results are useful in this respect also.
From this point of view you may look at the time-local contraction result below this way: at each time step $l\geq 1$ having determined the data $v^{l-1}_i(t^l_0,.),~1\leq i\leq n$, for some time $t^l_0>0$ we determine a time-local fixed point
of a map
\begin{equation}
\mathbf{f}\rightarrow \mathbf{v}^{f,l},
\end{equation}
where $\mathbf{f}^l=(f^l_1,\cdots ,f^l_n)^T$ is a function such that for all $1\leq i\leq n$ and on some local time interval $[t^l_0,t^l_1]$ the functions $f^l_i$ with $f^l_i(t,.)\in H^2,~[t^l_0,t^l_1]$ are Lipschitz continuous with respect to time, and the function $\mathbf{v}^l=(v^l_1,\cdots,v^l_n)^T$ satisfies the equation
\begin{equation}\label{Navlerayf}
\left\lbrace \begin{array}{ll}
\frac{\partial v^{l}_i}{\partial t}-\nu\sum_{j=1}^n \frac{\partial^2 v^{l}_i}{\partial x_j^2} 
+\sum_{j=1}^n f^l_j\frac{\partial f^l_i}{\partial x_j}=\\
\\ \hspace{1cm}\sum_{j,m=1}^n\int_{{\mathbb R}^n}\left( \frac{\partial}{\partial x_i}K_n(x-y)\right) \sum_{j,m=1}^n\left( \frac{\partial f^l_m}{\partial x_j}\frac{\partial f^{l}_j}{\partial x_m}\right) (t,y)dy,\\
\\
\mathbf{v}^{l}(t^l_0,.)=\mathbf{v}^{l-1}(t^l_0,.).
\end{array}\right.
\end{equation}
This time local fixed point determines a time-local unique classical solution of the incompressible Navier-Stokes equation in its Leray projection form. Uniqueness is implied only in a rather limited space (from the mathematical analyst point of view - the physicist may be satisfied), but this is not our main issue here. The techniques of Koch and Tartaru are certainly stronger in this respect (cf. \cite{KT}). Our main issue in this paper is to define global fixed points of the map (\ref{Navlerayf}) in strong norms, and for this we need some additional ideas which go beyond a clever analysis of local iterations in a Banach space. The question of global smooth existence for strictly positive viscosity $\nu>0$ has been reduced to the question of $H^1$-regularity by T. Tao (cf. \cite{Tao}). This may be supplemented by weak derivative schemes converging in $H^1$. In contrast we follow a classical approach here, where local contraction results in strong norms are supplemented by the idea of dynamically defined bounded regular control functions which control the growth of the Leray projection term.  
 Another idea may be the reduction of the Navier-Stokes equation to other equations which may be easier to solve such as reductions by Cole-Hopf transformations. However such transformations are very special, and natural extensions such as variable viscosity or incompressible Navier-Stokes equations on manifolds cannot be treated this way. Therefore we stick to the original equation and define a global scheme for the incompressible Navier-Stokes equation in its Leray projection form. For simplicity we present it in a form which assumes constant viscosity $\nu>0$, but slightly more complicated variations of the scheme can be defined which allow to generalize the argument to a certain class of equations with variable first and second order coefficients as indicated in \cite{KB2,K3,KNS}. The use of the adjoint of the fundamental solution of scalar linear parabolic equations is a quite powerful tool in order to extend results of the present paper to such kind of systems. Note that the adjoint can be used also for equations with regular variable second order coefficients which satisfy a natural ellipticity condition.  Further generalisations seem to be possible. Indeed it seems that the systems which satisfy a H\"{o}rmander conditions such as the one proposed by Mattingly  ( and as we indicated in \cite{K3,KIF2}) in the case of dimension $2$ define a natural class of systems with global smooth existence in higher dimensions $n\geq 3$ as well (if we consider them without noise, of course). All these systems are characterized by the existence of smooth densities which disappear in the inviscid limit. This does not mean that global solutions of the incompressible Euler equation may not be constructed via inviscid limits of global solutions of the incompressible Navier-Stokes equation in its Leray projection form. This is possible if the estimates involved can be done with constants which are independent of the size of the viscosity. It seems that some fine tuning of the analysis presented here is possible which relates the order of polynomial decay of the data to the existence of global regular solutions. It seems that we have to be more restrictive for the incompressible Euler equation than for the incompressible Navier-Stoke equation in order to obtain global regular inviscid limits  if this is possible at all. Anyway, looking for solutions of the incompressible Euler equation via inviscid limits is -speaking metaphorically - like the effect of a torch in big space from a very specific angle. Young inequality estimates for convolutions with the Gaussian of a heat equation with viscosity $\nu$ as a factor of the Laplacian are certainly useful in this respect. But note that the Leray projection form has an other status for the incompressible Euler equation as it has for the incompressible Navier-Stokes equation. Especially it seems unlikely that an inviscid limit (if it exists in a given situation) is a unique solution of the incompressible Euler equation. 
 It seems that many researchers do not expect global smooth existence and uniqueness for the corresponding incompressible Euler equation in case of dimension $n\geq 3$. Let us point out why we agree with this view. If there exists a global smooth and unique solution to the incompressible Euler equation, then arbitrary derivatives of the incompressible Euler equation have a global unique solution. For higher order derivatives we may apply the product rule to the equation terms and get more and more extended sums for the Leray projection term, and it may be possible then to construct nonzero solutions to the incompressible Euler equation with zero Leray projection term and zero divergence (incompressibility). This solution may then be at the same time a singular solution of the multivariate Burgers equation with a finite singular behavior. It is even likely that for some higher order derivative such solutions can be constructed with data of finite energy. Furthermore, is seems that there are singular solutions of the vorticity form of incompressible Euler equations with a singular point $(T,0)$, which are inviscid limits of global solutions of an incompressible Navier Stokes type equation. Such incompressible Navier Stokes type equations are obtained on spatially bounded domains (cones) by natural transformations which may be trivially extended to the whole domain (cf. \cite{KE1}).
In addition to our remark above concerning the question of uniqueness with regard to the incompressible Euler equation, note that the existence of multiple weak solutions has been shown by Wiedemann recently. If it is true (as we think) that the incompressible Navier-Stokes equation and a considerable larger class of systems with first order coupling satisfy global smooth existence and uniqueness, and if it is true that the incompressible Euler equation satisfies neither of these two fundamental properties of having a unique and a global smooth solution, then its seems quite likely that viscosity $\nu=0$ is a bifurcation point of other qualitative behavior. Preservation of properties for the inviscid limit have to be studied with caution for every property, and, on the other hand, we have to be very cautious with respect to the view that the incompressible Euler equation is an approximative model of the incompressible Navier-Stokes equation with small viscosity. 
Furthermore, the existence of multiple weak solutions should make us cautious concerning any 'numerical evidence' for the type of singularity expected. So much for a preface.

Next we reconsider the definition of the old local scheme defined in (\cite{KB2}).        First, we recall a naive simple scheme in brief which leads to local solutions. For time step $l\geq 1$, a substep iteration number $k\geq 1$, and a small time step size $\rho_l$ along with $ \frac{1}{l}\lesssim \rho_l$ we consider a time-local functional scheme for functions $v^{\rho,l,k}_i:[l-1,l]\times {\mathbb R}^n\rightarrow {\mathbb R},~1\leq i\leq n, k\geq 1$ with limit
\begin{equation}
v^{\rho, l}_i:=v^{\rho,l-1}_i(l-1,.)+\sum_{k=1}^{\infty} \delta v^{\rho,l,k}_i,
\end{equation}
where for $1\leq i\leq n$  we have $v^{\rho,l-1}_i(l-1,.)\in H^2\cap C^2$, and   
$\delta v^{\rho,l,k}_i= v^{\rho,l,k}_i- v^{\rho,l,k-1}_i$ are functions defined on the domain $[l-1,l]\times {\mathbb R}^n$ along with $v^{\rho,l,0}:=v^{\rho,l-1}_i(l-1,.)$. The functions $v^{\rho,l,k}_i$ satisfy time-local approximations of the incompressible Navier-Stokes equations of the form
\begin{equation}\label{Navleray}
\left\lbrace \begin{array}{ll}
\frac{\partial v^{\rho,l,k}_i}{\partial \tau}-\rho_l\nu\sum_{j=1}^n \frac{\partial^2 v^{\rho,l,k}_i}{\partial x_j^2} 
+\rho_l\sum_{j=1}^n v^{\rho,l,k-1}_j\frac{\partial v^{\rho,l,k}_i}{\partial x_j}=\\
\\ \hspace{1cm}\rho_l\sum_{j,m=1}^n\int_{{\mathbb R}^n}\left( \frac{\partial}{\partial x_i}K_n(x-y)\right) \sum_{j,m=1}^n\left( \frac{\partial v^{\rho,l,k-1}_m}{\partial x_j}\frac{\partial v^{\rho,l,k-1}_j}{\partial x_m}\right) (\tau,y)dy,\\
\\
\mathbf{v}^{\rho,l,k}(l-1,.)=\mathbf{v}^{\rho,l-1}(l-1,.),
\end{array}\right.
\end{equation}
and on the domain $[l-1,l]\times {\mathbb R}^n$ for $l\geq 1$. This scheme is a time-local scheme, where the initial data are assumed to be bounded, i.e.,
\begin{equation}
\max_{1\leq i\leq n}{\big |}v_i^{\rho,l-1}(l-1,.){\big |}_{H^2}\leq \tilde{C}
\end{equation}
for some finite constant $\tilde{C}$, and where $H^m$ denotes the standard Sobolev space of order $m$.
Furthermore, the function $K_n$ is the fundamental solution of the $n$-dimensional Laplacian. We are interested in the case $n\geq 3$. For $l=1$ we define
$\mathbf{v}^{\rho,l-1}(l-1,.)=\left(v^{\rho,l-1}_1(l-1,.),\cdots ,v^{\rho,l-1}_n(l-1,.)\right)^T =\mathbf{h}(.)=\left(h_1,\cdots ,h_n \right)^T $. It is natural to assume that
\begin{equation}
h_i\in \cap_{s\in {\mathbb R}}H^s\left({\mathbb R}^n\right) 
\end{equation}
for all $1\leq i\leq n$. This assumption is implied by the slightly stronger assumption that we have polynomial decay of arbitrary order of the initial value functions (cf. \cite{KB2} for a short discussion). However, the essence of this paper is that data with regularity $h_i\in H^m\cap C^m$ for $m\geq 2$ lead to globally existent regular velocity components $v_i$ with $v_i(t,.)\in H^m\cap C^m$ for all $1\leq i\leq n$ and $t\in [0,\infty)$. Furthermore, for $m\geq 2$ we get classical global solution, i.e., differentiability of the velocity functions with respect to time.  
First we shall show that for rather strong norms we can choose a certain step size $\rho_l>0$ such that we have a local contraction result. We shall determine this step size explicitly. The subscript $l$ indicates that the step size may decrease with the time step number $l\geq 1$, but it is clear that it should satisfy at least $\rho_l\sim \frac{1}{l}$ for a global scheme. In order to get a global scheme we want to show that a strong local contraction result can be combined with the analysis of a global equation for the controlled function $\mathbf {v}^r=:\mathbf{v}+\mathbf{r}=\left(v^{r}_1,\cdots ,v^r_n \right)^T $  with a regular bounded control function $\mathbf{r}=\left(r_1,\cdots ,r_n \right)^T$. We define a contolled scheme where the controlled velocity component functions $v^r_i$ and the components of the control funcition are defined dynamically, and then we derive upper bounds for the controlled velocity components $v^r_i$ and for the control function components $r_i$ in $C^0\left(\left[0,T\right],H^m\right)$ for $m\geq 2$ for arbitrary $T>0$. The control function will be only Lipshitz at some discrete times, but it turns out that we first get upper bounds for the uncontrolled velocity component functions $v_i=v^{r}_i-r_i\in C^0\left(\left[0,T\right],H^m\right)$ and then $v_i=v^{r}_i-r_i\in C^0\left(\left[0,T\right],H^m\right)$ for arbitrary $T>0$. This leads to global regular  existence for the uncontrolled incompressible Navier Stokes equation.
More precisely, in this part III of articles on the multivariate Burgers equation and the incompressible Navier Stokes equation we make several contributions. First we define a simplified scheme which will allow us to simplify the proof of local contraction in $H^m$-based spaces for $m\geq 2$ in the sense that the argument based on the adjoint of the fundamental solution considered in \cite{KB2} is avoided. Second, we improve the result concerning the local contraction by showing that a certain kind of polynomial decay is preserved for a controlled value function $v^{r,*,\rho,l}_i$ together with higher order regularity. Locally, the control functions $r^l_i$ (where similarly as in the notation for the velocity value functions the upper index of the time step number $l\geq 1$ indicates a restriction to the domain $[l-1,l]\times {\mathbb R}^n$) 'can take some load' such that they have a polynomial decay of smaller order, but this slightly smaller polynomial decay is preserved as well. Third, we show more explicitly than before that the scheme proposed is an approach of constructing classical solution rather than a mere numerical scheme. Fourth, we show that an extension of the scheme with a simplified control function leads to a linear bound of the growth of the solution with respect to the time step number $l\geq 1$. This linear upper bound holds with respect to the $C^0\left((l-1,l), H^{m}\right) $-norm and the $C^1\left((l-1,l), H^{m}\right)$-norm for appropriate $m\geq 2$ and where we take suprema with respect to time.  The price to pay for a simplified control function is that we have only a global linear bound of the Leray projection term and a linearly decreasing time-step size. For this reason we also consider extended control functions with certain consumption functions which lead to an upper bound of the Leray projection term which is independent of the time step number. We mentioned earlier in \cite{KNS} that a controlled scheme of the incompressible Navier Stokes equation can be designed which allows for a constant step size and for a numerical stabilization of computations. Here we propose an alternative proof method for the sake of global existence and regularity, not for numerical purposes in the first place. We shall see that the local contraction estimate is an essential step. The next section discusses the relation of local and global controlled solutions. Then in section 3 we introduce a simplified scheme. In section 4 we state the local contraction results. In section 5 we state the essential global results for the uncontrolled Navier Stokes equation, which can be derived rather straightforwardly from the controlled scheme together with the local contraction results.  In section 6 we prove that polynomial decay of a certain order is preserved for the higher order correction terms by the local controlled scheme.  Then in section 7 we prove  local contraction results for this simplified scheme, where we improve the local result in the $C^0\left((l-1,l), H^{m}\right)$ function space of \cite{KB2}. In section 8 we add some remarks for related norms, and in section 9 and section 10 we prove the existence of a uniform global bound and a global linear bound for the Leray projection term respectively. Additional information with respect to the analysis in section 8 and section 9 is provided in (\cite{KHyp}). We close the article with some final remarks in section 11.

\section{The relation of local and global solutions}
In this section we make three observations. First, local contraction results in the function space $C^0\left((l-1,l), H^{m}\right)$ and $C^1\left((l-1,l), H^{m}\right)$ along with the norms
\begin{equation}
 {\big |}f{\big |}_{C^0\left((l-1,l)\times H^m\left({\mathbb R}^n\right) \right) }:=\sup_{\tau\in (l-1,l)}|f(\tau,.)|_{H^m\left({\mathbb R}^n\right) },
\end{equation}
and
\begin{equation}
\begin{array}{ll}
 {\big |}f{\big |}_{C^1\left((l-1,l)\times H^m\left({\mathbb R}^n\right) \right) }:=&\sup_{\tau\in (l-1,l)}|f(\tau,.)|_{H^m\left({\mathbb R}^n\right) }\\
 \\
 &+{\big |}D_{\tau}f{\big |}_{C^0\left((l-1,l)\times H^m\left({\mathbb R}^n\right) \right) },
 \end{array}
\end{equation} 
for $m\geq 2$ lead to local existence results of smooth solutions. Second, local contraction results can be used within controlled Navier Stokes equation systems in order to design global schemes with global controlled value functions and globally defined  control functions. Third, we observe that these global controlled Navier-Stokes equation systems lead to the existence of global solutions. We consider the local scheme defined in the introduction, and which was also considered in \cite{KB2}. We note that the rules for the adjoint of scalar parabolic equations hold also for a considerable class of scalar parabolic equations with variable second order coefficients. Therefore this scheme can be generalized to more general equations with variable viscosity. Later we shall consider some variations of these observations for the simplified scheme of this paper. 

In order to prove global existence results from local existence results we introduce a control function $\mathbf{r}=\left(r_1,\cdots ,r_n \right)^T:[0,\infty)\times {\mathbb R}^n\rightarrow {\mathbb R}^n$. In general the function $\mathbf{r}$ is chosen in a class of bounded functions with bounded spatial derivatives up to second order which are Lipschitz continuous with respect to time. This is the regularity of the control function which we defined in \cite{KNS} and also in \cite{K3}. More precisely, those control functions are Lipschitz at times equal to the time step numbers $l\geq 1$ (with respect to transformed time coordinates $\rho_l\tau=t$) and are differentiable with respect to time elsewhere. The simplified control functions $r_i,~1\leq i\leq n$ defined later in this paper have the advantage that they are time differentiable for all times. The equation for the controlled velocity function 
\begin{equation}
\mathbf{v}^{r}:=\mathbf{v}+\mathbf{r},
\end{equation}
with $\mathbf{v}=\left(v_1,\cdots v_n\right)^T$ is more complicated than the original incompressible Navier-Stokes equation, of course. However, we may choose the control functions $r_i,~1\leq i\leq n$, and using the semi-group property of the operator we may choose the control function $\mathbf{r}$ time step by time step - with certain restrictions of course.  Indeed, the control functions $r_i,~1\leq i\leq n$, are constructed inductively with respect to the time step number $l\geq 1$ using the information of the final data of the previous time step $l-1$, and of the initial data $\mathbf{h}$ at the first time step. In a direct approach the effect of an appropriate choice is that at each time step is that we have to solve inhomogeneous local incompressible Navier-Stokes equations with consumption source terms on the right side, which control global boundedness. As long as the control function is itself globally bounded ( an upper bound for the control function which is linear with respect to time suffices) we solve a problem which is equivalent to the incompressible Navier-Stokes equation.  It can be shown then within the analysis of the scheme that the control functions are indeed globally bounded and satisfy some other convenient properties. In case of simplified control functions without consumption functions we prove the existence of a global linear bound of the control functions. Let us look at these ideas in more detail.
Note that the function $\mathbf{v}^r=\left(v^r_1,\cdots v^r_n\right)^T=\left(v_1+r_1,\cdots v_n+r_n\right)^T$
satisfies the equation   
\begin{equation}\label{Navleray}
\left\lbrace \begin{array}{ll}
\frac{\partial v^r_i}{\partial t}-\nu\sum_{j=1}^n \frac{\partial^2 v^r_i}{\partial x_j^2} 
+\sum_{j=1}^n v^r_j\frac{\partial v^r_i}{\partial x_j}=\\
\\
\frac{\partial r_i}{\partial t}-\nu\sum_{j=1}^n \frac{\partial^2 r_i}{\partial x_j^2} 
+\sum_{j=1}^n r_j\frac{\partial v^r_i}{\partial x_j}+\sum_{j=1}^n v^r_j\frac{\partial r_i}{\partial x_j}-\sum_{j=1}^n r_j\frac{\partial r_i}{\partial x_j}
\\
\\+\int_{{\mathbb R}^n}\left( \frac{\partial}{\partial x_i}K_n(x-y)\right) \sum_{j,k=1}^n\left( v^r_{k,j}v^r_{j,k}\right) (t,y)dy\\
\\
-2\int_{{\mathbb R}^n}\left( \frac{\partial}{\partial x_i}K_n(x-y)\right) \sum_{j,k=1}^n\left( v^r_{k,j}r_{j,k}\right) (t,y)dy\\
\\
-\int_{{\mathbb R}^n}\left( \frac{\partial}{\partial x_i}K_n(x-y)\right) \sum_{j,k=1}^n\left( r_{k,j}r_{j,k}\right) (t,y)dy,\\
\\
\mathbf{v}^r(0,.)=\mathbf{h}.
\end{array}\right.
\end{equation}
If we can solve this equation for controlled velocity functions $v^r_i,~1\leq i\leq n$ which are globally Lipschitz in time and have bounded spatial derivatives of second order  for an appropriate control function space $R$ (where we construct $r\in R$ time-step by time-step), then we can prove that the uncontrolled summand $v_i\in C^{1,2}\left(\left[0,\infty\right)\times {\mathbb R}^n\right)$ for $1\leq i\leq n$ of the controlled value function $\mathbf{v}^r$ is a global classical solution of the incompressible Navier Stokes equation. The construction is done time-step by time step on domains $\left[l-1,l\right]\times {\mathbb R}^n,~l\geq 1$, where for $1\leq i\leq n$ the restriction of the control function component $r_i$ to $\left[l-1,l\right]\times {\mathbb R}^n$ is denoted by 
$r^l_i$. Here, the local equation is defined in terms of transformed time coordinates $t=\rho_l\tau$. Note that we should have a time-step size greater or equal to $\rho_l\sim \frac{1}{l}$ at each time step in order to obtain a global scheme. In the discretized scheme we denote time by $\tau$ without a time step number index $l$ for simplicity, i.e., we shall know from the context of the equation that $\tau\in [l-1,l]$ at each time step $l$. The local functions $v^{r,\rho,l}_i$ with $v^{r,\rho,l}_i(\tau,x)=v^{r,l}_i(t,x)$ are defined inductively on $\left[l-1,l\right]\times {\mathbb R}^n$ along with the control function $r^l$ via the Cauchy problem for
\begin{equation}
\mathbf{v}^{r,\rho,l}:=\mathbf{v}^{\rho,l}+\mathbf{r}^l.
\end{equation}
Here, $\mathbf{v}^{\rho,l}=\left(v^{\rho,l}_1,\cdots ,v^{\rho,l}_n \right)^T$ is the time transformed solution of the incompressible Navier Stokes equation (in Leray projection form) restricted to the domain
$\left[l-1,l\right]\times {\mathbb R}^n$, and where $v^{\rho,l}_i(\tau,x)=v^{l}_i(t,x)$ for $\tau\in [l-1,l]$, and $v^{l}_i,~1\leq i\leq n$ denotes the restriction of a solution of the incompressible Navier Stokes equation (in Leray projection form) to the domain
$\left[\sum_{m=1}^{l-1}\rho_m,\sum_{m=1}^l\rho_m\right]\times {\mathbb R}^n$. We do not add a superscript $\rho$ to the control function $\mathbf{r}^l$ for notational simplicity, as we consider control functions only in transformed time coordinates anyway in the following. Note that the local solution function at time-step $l\geq 1$,  
\begin{equation}
\mathbf{v}^{r,\rho,l}=\left(v^{\rho,l}_1+r^l_1,\cdots v^{\rho,l}_n+r^l_n\right)^T,
\end{equation}
satisfies the equation   
\begin{equation}\label{Navleraycontrolledlint}
\left\lbrace \begin{array}{ll}
\frac{\partial v^{r,\rho,l}_i}{\partial \tau}-\rho_l\nu\sum_{j=1}^n \frac{\partial^2 v^{r,\rho,l}_i}{\partial x_j^2} 
+\rho_l\sum_{j=1}^n v^{r,\rho,l}_j\frac{\partial v^{r,\rho,l}_i}{\partial x_j}=\\
\\
\frac{\partial r^l_i}{\partial \tau}-\rho_l\nu\sum_{j=1}^n \frac{\partial^2 r^l_i}{\partial x_j^2} 
+\rho_l\sum_{j=1}^n r^l_j\frac{\partial v^{r,\rho,l}_i}{\partial x_j}\\
\\
+\rho_l\sum_{j=1}^n v^{r,\rho,l}_j\frac{\partial r^l_i}{\partial x_j}-\rho_l\sum_{j=1}^n r^l_j\frac{\partial r^l_i}{\partial x_j}
\\
\\+\rho_l\int_{{\mathbb R}^n}\left( \frac{\partial}{\partial x_i}K_n(x-y)\right) \sum_{j,k=1}^n\left( \frac{\partial v^{r,\rho,l}_k}{\partial x_j}\frac{\partial v^{r,\rho,l}_j}{\partial x_k}\right) (\tau,y)dy\\
\\
-2\rho_l\int_{{\mathbb R}^n}\left( \frac{\partial}{\partial x_i}K_n(x-y)\right) \sum_{j,k=1}^n\left( \frac{\partial v^{r,\rho,l}_k}{\partial x_j}\frac{\partial r^l_j}{\partial x_k}\right) (\tau,y)dy\\
\\
-\rho_l\int_{{\mathbb R}^n}\left( \frac{\partial}{\partial x_i}K_n(x-y)\right) \sum_{j,k=1}^n\left( \frac{\partial r^l_k}{\partial x_j}\frac{\partial r^l_j}{\partial x_k}\right) (\tau,y)dy,\\
\\
\mathbf{v}^{r,\rho,l}(l-1,.)=\mathbf{v}^{r,\rho,l-1}(l-1,.).
\end{array}\right.
\end{equation}
At the first time step the function $r^1_i$ may be chosen equal to zero, but a choice proportional to $\mathbf{h}$ with positive proportionality is useful. For the simplified control function a natural choice is $\delta r^l_i=-\delta v^{\rho,1,1}_i$ with the possible convention that $r^{l-1}_i=r^0_i\equiv 0$. Similar for extensions of the control function with consumption terms. Since we have a local existence theory, we can do it either way. For a time step $l>1$ the functions $r^{l}_i,1\leq i\leq n$ are assumed to be chosen inductively, where the rule of choice is determined by the following considerations. Here we reconsider the ideas of \cite{KNS} first, and then we turn to the possibilities of a simplified control function which we consider in this paper as an alternative. The right side of the first equation in (\ref{Navleraycontrolledlint}) is determined by the choice of the functions $r^l_i$. Note that we do not know the solution function $v^{r,\rho,l}_i,~1\leq i\leq n$ at the beginning of time step $l\geq 1$, which appears on the right side.
However, since we proceed in small time steps, we know this function approximately (if certain local results hold which we shall obtain later). Hence, we approximate the functions $v^{r,\rho,l}_i$ on the right side of (\ref{Navleraycontrolledlint}) in order to choose an equation which determines $r^l_i$, where at time step $l\geq 1$ the right side of (\ref{Navleraycontrolledlint}) may be modified by substituting $v^{r,\rho,l}_i$ by the data from the last time step, i.e., by the function $v^{r,\rho,l-1}_i(l-1,.)$. In order to control the growth with respect to time we shall observe that it suffices to define $\delta r^l_i=-\delta v^{r,\rho,l,1}_i$ for the control function increment at time step $l\geq 1$. However, the simplest control function which leads to global regular upper bounds (first in $C^0\left([0,T],H^m\cap C^m\right)$ for arbitrary time $T>0$ for the controlled velocity function components $v^r_i$ and the control function components $r_i$), and then in  $C^1\left([0,T], H^m\cap C^m \right)$ for the uncontrolled velocity function components $v_i$) may a control function increment of the form
\begin{equation}\label{simpleincrement1}
\delta r^l_i=\int_{l-1}^l\int_{{\mathbb R}^n}\frac{-v^{r,\rho,l-1}_i(l-1,y)}{C}G_l(\tau-s,x-y)dyds,
\end{equation}
where $G_l$ is the fundamental solution of $G_{l,\tau}-\rho_l\nu\Delta G_l=0$  on $[l-1,l]\times {\mathbb R}^n$, and where $C>0$ is some constant which is actually an upper bound of the controlled velocity value function with respect to some regular norm.
We shall observe that both definitions of control function increments lead to a linear upper bound with respect to time of the control function and of the controlled velocity functions. In order to obtain a global upper bound which is independent of the time step number $l\geq 1$ we may in addition introduce 'source term functions' related to the equations of the control functions, and which are determined time-step by time-step, such that they serve as consumption terms of growth (of the solution) at each time step. The reader which is mainly interested in global regular existence of the uncontrolled Navier Stokes equation may skip the following notes on more sophisticated control functions which lead to regular upper bounds which are independent of the time step number. In the following keep in mind the effect of the simple control function scheme with control function increments as in (\ref{simpleincrement1}), where you may start at the first time step with $r^1_i(0,.):=\frac{h_i}{C}$ with $C\geq {\big |}h_i{\big |}_{H^m\cap C^m}$ for some $m\geq 2$ (if we want to construct global regular upper bounds for the velocity componenet functions in $C^1\left([0,\infty),H^m\cap C^m \right)$: for small time step size $\rho_l$ there is at most linear growth of control function while the upper bound for the controlled velocity functions evaluated at time $l$, i.e. ${\big |}v^{r,\rho,l}_i(l,.){\big |}_{H^m\cap C^m}\leq C$, is preserved as $l$ increases. This means that we can use a constant time step size for this simple scheme.  The effect after a finite number of time steps is that depending on the choice of $C$ and $\rho$ at space-time points $(\tau,x)$ where the controlled velocity function component values $v^{r,\rho,l}_i(\tau,x)$ and the control function values $r^l_i(\tau,x)$ have different signs, the control function value becomes small compared to the constant $C$ if a) the upper bound $C>0$ is large and $b)$ the time step size $\rho>0$ is small on the scale of $C$. For example, as $\rho\sim \frac{1}{C^k}$ for some positive integer $k\geq 3$ the controlled velocity function values become smaller at these points as $k$ increases (for fixed proportionality). This observation can be used in order to construct global regular upper bounds which are stronger with respect to time. However, as we shall see, it is quite an effort to get a stronger result than we get with the simple control function with increments as in (\ref{simpleincrement1}). Note that with this control function we get even a logarithmic upper bound withrespect to time if we have a decreasing time step size $\rho_l\sim \frac{1}{C^kl}$.  Furthermore, we shall also discuss the relation to an alternative construction of time-independent regular global upper bounds via auto-controlled schemes sketched in \cite{KAC}. The auto-controlled schemes simplify some steps of the construction, but, we shall observe that external control functions are of independent interest as they allow us to prove the existence of global regular upper bounds for a larger class of models.   Note that the following ideas are quite closely related to another method where we first define the equation for the control functions $r^l_i$ such that a solution for $r^l_i$ leads to some source terms on the right side of of (\ref{Navleraycontrolledlint}), which are then constructed time-step by time-step such that they control the growth of the controlled velocity function. These source terms serve as 'growth consumption terms', and are denoted by $\phi^l_i,~1\leq i\leq n$. Accordingly, we derive an equation from the right side of (\ref{Navleraycontrolledlint}) of the form
\begin{equation}\label{controllint}
\left\lbrace \begin{array}{ll}
\frac{\partial r^l_i}{\partial \tau}-\rho_l\nu\sum_{j=1}^n \frac{\partial^2 r^l_i}{\partial x_j^2} 
+\rho_l\sum_{j=1}^n r^l_j\frac{\partial v^{r,\rho,l-1}_i}{\partial x_j}\\
\\
+\rho_l\sum_{j=1}^n v^{r,\rho,l-1}_j\frac{\partial r^l_i}{\partial x_j}-\rho_l\sum_{j=1}^n r^l_j\frac{\partial r^l_i}{\partial x_j}
\\
\\+\rho_l\int_{{\mathbb R}^n}\left( \frac{\partial}{\partial x_i}K_n(x-y)\right) \sum_{j,k=1}^n\left( \frac{\partial v^{r,\rho,l-1}_k}{\partial x_j}\frac{\partial v^{r,\rho,l-1}_j}{\partial x_k}\right) (l-1,y)dy\\
\\
-2\rho_l\int_{{\mathbb R}^n}\left( \frac{\partial}{\partial x_i}K_n(x-y)\right) \sum_{j,k=1}^n\left( \frac{\partial v^{r,\rho,l-1}_k}{\partial x_j}(l-1,y)\frac{\partial r^l_j}{\partial x_k}(\tau,y)\right)dy\\
\\
-\rho_l\int_{{\mathbb R}^n}\left( \frac{\partial}{\partial x_i}K_n(x-y)\right) \sum_{j,k=1}^n\left( \frac{\partial r^l_k}{\partial x_j}\frac{\partial r^l_j}{\partial x_k}\right) (\tau,y)dy=\phi^l_i,\\
\\
\mathbf{r}^l(l-1,.)=\mathbf{r}^{l-1}(l-1,.).
\end{array}\right.
\end{equation}
Note that at this time step $l\geq 1$ the functions $v^{r,\rho,l-1}_i$ are taken from the previous time step $l-1$, and they are evaluated at $l-1$ with respect to the time variable., i.e., $v^{r,\rho,l-1}_i=v^{r,\rho,l-1}_i(l-1,.)$, where for $l=1$ we take $v^{r,\rho,0}_i(0,.)=h_i(.)$. All functions with superscript $l-1$ appearing in problems at time step $l\geq 1$ are always intended to be evaluated at time $\tau=l-1$. The equation in (\ref{controllint}) for the control function looks as complicated as a Navier-Stokes equation. However, we need to solve it locally at most (small time-step size $\rho_l$), and we shall see how we can do this (and is also well-known by other reasoning). Later we shall observe that we can avoid solving a local Navier Stokes type equation for the control function. We could also linearize the equation for (\ref{controllint}). From a constructive perspective this is quite natural: we can construct the control function by starting with the construction of a approximation $ \mathbf{r}^{*,l}=\left( r^{*,l}_1,\cdots ,r^{*,l}_n\right)^T$ which is determined inductively at time step $l\geq 1$ by the equation
\begin{equation}\label{controllint*}
\left\lbrace \begin{array}{ll}
\frac{\partial r^{*,l}_i}{\partial \tau}-\rho_l\nu\sum_{j=1}^n \frac{\partial^2 r^{*,l}_i}{\partial x_j^2} 
+\rho_l\sum_{j=1}^n r^{*,l-1}_j\frac{\partial v^{r,\rho,l-1}_i}{\partial x_j}\\
\\
+\rho_l\sum_{j=1}^n v^{r,\rho,l-1}_j\frac{\partial r^{*,l-1}_i}{\partial x_j}-\rho_l\sum_{j=1}^n r^{*,l-1}_j\frac{\partial r^{*,l-1}_i}{\partial x_j}
\\
\\+\rho_l\int_{{\mathbb R}^n}\left( \frac{\partial}{\partial x_i}K_n(x-y)\right) \sum_{j,k=1}^n\left( \frac{\partial v^{r,\rho,l-1}_k}{\partial x_j}\frac{\partial v^{r,\rho,l-1}_j}{\partial x_k}\right) (l-1,y)dy\\
\\
-2\rho_l\int_{{\mathbb R}^n}\left( \frac{\partial}{\partial x_i}K_n(x-y)\right) \sum_{j,k=1}^n\left( \frac{\partial v^{r,\rho,l-1}_k}{\partial x_j}(l-1,y)\frac{\partial r^{*,l-1}_j}{\partial x_k}(l-1,y)\right)dy\\
\\
-\rho_l\int_{{\mathbb R}^n}\left( \frac{\partial}{\partial x_i}K_n(x-y)\right) \sum_{j,k=1}^n\left( \frac{\partial r^{*,l}_k}{\partial x_j}\frac{\partial r^{*,l}_j}{\partial x_k}\right) (l-1,y)dy=\phi^{*,l}_i,\\
\\
\mathbf{r}^{*,l}(l-1,.)=\mathbf{r}^{*,l-1}(l-1,.).
\end{array}\right.
\end{equation}
Note that the difference $r^l_i-r^{*,l}_i$ becomes small with small time-step size $\rho_l$ while the source functions $\phi^l_i$ or $\phi^{*,l}_i$ do not depend on the step size (they can be chosen this way).
Anyway the following idea of a consumption function based on these source functions remains the same.  
We may choose consumption functions $\phi^l_i$ according to the growth which we observe at the previous time step. However, we may improve on this idea of a consumption function where we use the fact that we can solve the incompressible Navier Stokes equation locally in time. This allows us to choose the control function at time $l-1$ by looking at the behavior of the uncontrolled velocity function with controlled data at time $l$. This way we can a) ensure that {\it after} each time step (not during each time step in general, but this does not matter) we can ensure that the control function and the controlled velocity function have the same sign. This ensures that the controlled velocity function and the control function are uniformly bounded over time in a more direct argument. Furthermore, we can b) estimate the long time behavior as time goes to infinity. We call this extended scheme the scheme with small foresight.  First, note that it is convenient that we can avoid the involved equations for the controlled velocity functions $v^{r,\rho,l}_i,~1\leq i\leq n$ and for the control functions $r^{l}_i,~1\leq i\leq n$. If for $l\geq 1$ the data $v^{r,\rho,l-1}_i(l-1,.),~1\leq i\leq n$ and $r^{l-1}_i(l-1,.)$ are determined, then we may first compute that uncontrolled velocity function at time step $l\geq 1$ with controlled data. This is the solution $v^{r^{l-1},\rho,l}_i,~1\leq i\leq n$ of the incompressible Navier Stokes equation on $[l-1,l]\times {\mathbb R}^n$ with data $v^{r^{l-1},\rho,l-1}_i(l-1,.)=v^{r,\rho,l-1}_i(l-1,.),~1\leq i\leq n$. Note that the controlled velocity function at time step $l\geq 1$ is then determined by equations
\begin{equation}
v^{r,\rho,l}_i(\tau,x)=v^{r^{l-1},\rho,l}_i(\tau,x)+\delta r^l_i(\tau,x),~1\leq i\leq n,~(\tau,x)\in [l-1,l]\times {\mathbb R}^n.
\end{equation}
In the following we call $v^{r^{l-1},\rho,l}_i,~1\leq i\leq n$ the uncontrolled velocity function with controlled data at time step $l\geq 1$.
Then we may analyze the situation and consider $4n$ sets
\begin{equation}\label{partition1}
\begin{array}{ll}
V^{r^{l-1},l,i}_{v+r+}:=\left\lbrace x\in {\mathbb R}^n|v^{r^{l-1},\rho,l}_i(l,x)> 0~\& ~r^{l-1}_i(l-1,x)>0\right\rbrace\\
\\
V^{r^{l-1},l,i}_{v+r-}:=\left\lbrace x\in {\mathbb R}^n|v^{r^{l-1},\rho,l}_i(l,x)> 0~\& ~r^{l-1}_i(l-1,x)<0\right\rbrace\\
\\
V^{r^{l-1},l,i}_{v-r+}:=\left\lbrace x\in {\mathbb R}^n|v^{r^{l-1},\rho,l}_i(l,x)< 0~\& ~r^{l-1}_i(l-1,x)>0\right\rbrace\\
\\
V^{r^{l-1},l,i}_{v-r-}:=\left\lbrace x\in {\mathbb R}^n|v^{r^{l-1},\rho,l}_i(l,x)< 0~\& ~r^{l-1}_i(l-1,x)<0\right\rbrace.
\end{array}
\end{equation}
For each $l-1$ and each $1\leq i\leq n$ the latter four sets define a partition of the set ${\mathbb R}^n\setminus Z\left( v^{r,\rho,l-1}_i(l-1,.)\right) $, where
\begin{equation}\label{partition2}
Z\left( v^{r^{l-1},\rho,l}_i(l,.)\right) :=\left\lbrace x|v^{r^{l-1},\rho,l}_i(l,x)=0\right\rbrace. 
\end{equation}
Next for all $1\leq i\leq n$ and each time step number $l\geq 1$ define a function $g^l_i:{\mathbb R}^n\rightarrow {\mathbb R}$ by
\begin{equation}
g^l_i(y):=\left\lbrace \begin{array}{ll}
\frac{v^{r^{l-1},\rho,l}_i(l,y)}{C}+\frac{r^{l-1}_i(l-1,y)}{C^2}~\mbox{if}~y\in V^{r^{l-1},l,i}_{v+r+}\cup V^{r^{l-1},l,i}_{v-r-}\\
\\
2 v^{r^{l-1},\rho,l}_i(l,y)+\frac{r^{l-1}_i(l-1,y)}{C^2} \mbox{ else. }
\end{array}\right.
\end{equation}

The idea behind this definition is the following: if for some $1\leq i\leq n$ and the control function at time $l-1$ and the uncontrolled velocity function data at $(l-1,x)$ have the same sign, then we may define the control function increment at time step $l$ at each point $(\tau,x)\in [l-1,l]\times {\mathbb R}^n$ close to a value which  proportional to a weighted sum of the the negative of the controlled velocity function $v^{r^{l-1},\rho,l}_i(l,x)$ (looking slightly forward) and of the control function $r^{l-1}_i(l-1)$, i.e., we may define for all $(\tau,x)\in [l-1,l]\times V^{r^{l-1},l,i}_{v+r+}\cup V^{r^{l-1},l,i}_{v-r-}$ by
\begin{equation}\label{onequalsign}
\delta r^{l}_i(\tau,x)=-\int_{l-1}^{\tau}\int_{{\mathbb R}^n}g^l_i(y) p_l(\tau-s,x-y)dyds,
\end{equation}
where $p_l$ is the fundamental solution of
\begin{equation}
\frac{\partial}{\partial \tau}p-\rho_l\nu \Delta p=0.
\end{equation}
Note that for small time step size $\rho_l$ the value in (\ref{onequalsign}) is close to
\begin{equation}\label{onequalsign*}
-\int_{l-1}^{\tau}\int_{{\mathbb R}^n}\left( \frac{v^{r^{l-1},\rho,l}_i(l,y)}{C}+\frac{r^{l-1}_i(l-1,y)}{C^2}\right) p_l(\tau-s,x-y)dyds,
\end{equation}
The smoothing by convolution with a local heat kernel is useful especially if we define control functions depending on a partition as in (\ref{partition1}) and (\ref{partition2}) above which would render the control function to be only Lipschitz if we defined it without smoothing. Note that a definition on the domain of equal signs as in (\ref{onequalsign})
 will ensure that for all $x\in V^{r^{l-1},l,i}_{v+r+}\cup V^{r^{l-1},l,i}_{v-r-}$ and small time step size $\rho_l>0$ and large that for
\begin{equation}
{\big |}v^{r,\rho,l-1}_i(l-1,x)+r^{l}_i(l-1,x){\big |}\geq \frac{C}{2},~{\big |}r^{l}_i(l-1,x){\big |}\leq C^2+1
\end{equation}
we have
\begin{equation}
{\big |}v^{r,\rho,l}_i(l,x)+r^{l}_i(l,x){\big |}\leq {\big |}v^{r,\rho,l-1}_i(l-1,x)+r^{l}_i(l-1,x){\big |}.
\end{equation}

Next,  if for some $1\leq i\leq n$ and some $l$ a uncontrolled velocity function value with controlled data $v^{r^{l-1},\rho,l}_i(l,x)$ at time $l$ and the control function  $r^{l-1}_i(l-1,x)$ at time $l-1$ have different signs for some $x\in {\mathbb R}^n$, then we may define again
\begin{equation}\label{ondifferentsign}
\delta r^{l}_i(\tau,x)=-\int_{l-1}^{\tau}\int_{{\mathbb R}^n}g^l_i(y) p_l(\tau-s,x-y)dyds,
\end{equation}
for all $(\tau,x)\in [l-1,l]\times {\mathbb R}^n$. 
We note that for small time step size $\rho_l$ the value in (\ref{ondifferentsign}) is close to
\begin{equation}\label{ondifferentsign*}
\delta r^{l}_i(\tau,x)=-\int_{l-1}^{\tau}\int_{{\mathbb R}^n}\left( 2 v^{r^{l-1},\rho,l}_i(l,y)+\frac{r^{l-1}_i(l-1,y)}{C^2}\right) p_l(\tau-s,x-y)dyds,
\end{equation}
for all $(\tau,x)\in [l-1,l]\times {\mathbb R}^n$.

Well this more sophisticated control function with small foresight may be preferable from an analytical point of view, as it seems to be easier to study long time behavior with such descriptions. From a more numerical point of view we may prefer simpler control functions which do not depend on the computation of solutions of nonlinear equations (even locally) , and with some additional amount of work we may also obtain global uniform upper bounds with simpler control functions. For example, for $(\tau,x)\in [l-1,l)\times {\mathbb R}$ we may choose a source function
\begin{equation}\label{philli}
\phi^l_i(\tau,x):=-\frac{v^{r,\rho,l-1}_i(l-1,.)}{C}-\frac{r^{l-1}_i(l-1,.)}{C^2}
\end{equation}
for some constant $C$ which is constructed as a global upper bound within the construction. Note that (\ref{philli}) can also be written in the form
\begin{equation}\label{philli2}
\phi^l_i(\tau,x):=-\frac{v^{r,\rho,l-1}_i(l-1,.)+\frac{1}{C}r^{l-1}_i(l-1,.)}{C}
\end{equation}
which implies that it is somehow a scheme which is essentially equivalent to the scheme without the additional summand $-\frac{r^{l-1}_i(l-1,.)}{C^2}$. However use the representation in (\ref{philli}) in order to argue that a global linear upper bound for the velocity functions can be improved in order to get an uniform global upper bound. 
\begin{rem}\label{remarksimplecontrol}
In \cite{KNS,K3} we used source functions with a different weight. Indeed, in order to get a global linear bound for the solution and its derivatives it is sufficient to consider the source function 
\begin{equation}
\phi^l_i(\tau,x):=-\frac{v^{r,\rho,l-1}_i(l-1,.)}{C}
\end{equation}
In (\cite{KHyp}) we remarked that for the classical Navier Stokes equation system in this paper it is possible to define the simple control function increments via
\begin{equation}
\delta r^l_i(\tau,x):=-\int_{l-1}^{\tau}\int_{{\mathbb R}^n}\frac{v^{r,\rho,l-1}_i(l-1,.)}{C}p_l(s-(l-1),x-y)dyds
\end{equation}
The reason is for this possibility will be given below.
\end{rem}
Note that in this choice the source functions are independent of time. This means that we have bounded jumps in time for the source terms, but the construction of local solutions for $r^l_i$ involve time integrated source terms and this leaves us with control functions which are global Lipschitz in time and locally bounded and smooth within each time step. Note that the consumption function does not have a factor $\rho_l$ which may be small compared to $\frac{1}{C}$. Then we may use a step size $\rho_l$ which is small compared to $\frac{1}{C^2}$ and a local time contraction result may show that the consumptive behavior of the source functions $\phi^l_i$ is preserved for the equation (\ref{Navleraycontrolledlint}) if the control function is chosen according to the ideas related to (\ref{controllint}). Similar for a linearized equation for $r^l_i$. This looks rather complicated (also from a computational point of view), but together with a local contraction result this construction leads a globally bounded solution which is globally Lipschitz in time and smooth with respect to the spatial variables. And a little additional argument leads us to global regular solution of the original Navier-Stokes equation (cf. \cite{KNS}). In this paper we propose a simpler choice of the control function but the general idea is of the same origin. The sketched argument should make plausible our statement that local contraction results are crucial.
Here the local contraction result for the uncontrolled system in $C^0\left((l-1,l), H^{m}\right)$ for $m\geq 2$ is essential, because in the equations for the functional increments some source terms cancel and the additional terms are linear functionals of the control functions $r^l_i$. In \cite{KB2} we considered these contraction results in more detail. Note, however, that we choose the control function $r^l_i$ once at each time step, which means that for $k\geq 2$
\begin{equation}
\delta v^{r,\rho,l,k}_i=v^{r,\rho,l,k}_i-v^{r,\rho,l,k-1}_i=v^{\rho,l,k}_i-v^{\rho,l,k-1}_i
\end{equation}
such that a refined contraction for the higher order terms ($k\geq 2$)
\begin{equation}
{\big |}\delta v^{r,\rho,l,k}_i{\big |}_{C^0\left((l-1,l), H^{m}\right)}\lesssim \frac{1}{\sqrt{l}}{\big |}\delta v^{r,\rho,l,k-1}_i{\big |}_{C^0\left((l-1,l), H^{m}\right)}
\end{equation}
leads to a transition from local to global upper bounds as well.
Note that local existence results follow rather directly from the contraction results. In \cite{KB2} we considered local contraction results for all $l\geq 1$ and $1\leq i\leq n$, and for the functional series
\begin{equation}
v^{\rho, l}_i:=v^{\rho,l-1}_i(l-1,.)+\sum_{k=1}^{\infty} \delta v^{\rho,l,k}_i,
\end{equation}
where for $1\leq i\leq n$  recall that we assume that $v^{\rho,l-1}_i(l-1,.)\in H^2\cap C^2$, 
$\delta v^{\rho,l,k}_i= v^{\rho,l,k}_i- v^{\rho,l,k-1}_i$, $v^{\rho,l,-1}:=v^{\rho,l-1}_i$, and the functions $v^{\rho,l,k}_i$ satisfy the equations
\begin{equation}\label{Navleray2}
\left\lbrace \begin{array}{ll}
\frac{\partial v^{\rho,l,k}_i}{\partial \tau}-\rho_l\nu\sum_{j=1}^n \frac{\partial^2 v^{\rho,l,k}_i}{\partial x_j^2} 
+\rho_l\sum_{j=1}^n v^{\rho,l,k-1}_j\frac{\partial v^{\rho,l,k}_i}{\partial x_j}=\\
\\ \hspace{1cm}\rho_l\sum_{j,m=1}^n\int_{{\mathbb R}^n}\left( \frac{\partial}{\partial x_i}K_n(x-y)\right) \sum_{j,m=1}^n\left( \frac{\partial v^{\rho,l,k-1}_m}{\partial x_j}\frac{\partial v^{\rho,l,k-1}_j}{\partial x_m}\right) (\tau,y)dy,\\
\\
\mathbf{v}^{\rho,l,k}(l-1,.)=\mathbf{v}^{\rho,l-1}(l-1,.),
\end{array}\right.
\end{equation}
and on the domain $[l-1,l]\times {\mathbb R}^n$ for $l\geq 1$. For $l=1$ it is clear that the components of the function $\mathbf{v}^{\rho,l-1}(l-1,.)=\mathbf{h}(.)$ are in $H^2\cap C^2$. Consider the functional increments $\delta v^{r,\rho,l,k}_i=v^{r,\rho,l,k}_i-v^{r,\rho,l,k-1}_i$ where for $k=1$ we define $v^{r,\rho,0,l}_i=v^{r,\rho,l-1}_i(l-1,.)$.
Let us assume that we have a contraction result of the form
\begin{equation}\label{contract}
\begin{array}{ll}
\max_{i\in \left\lbrace 1,\cdots ,n\right\rbrace }|\delta v^{\rho,l,k}_i|_{C^0\left((l-1,l), H^{m}\right)}
\leq \frac{1}{2}\max_{i\in \left\lbrace 1,\cdots ,n\right\rbrace }|\delta v^{\rho,l,k-1}|_{C^0\left((l-1,l), H^{m}\right)},
\end{array} 
\end{equation}
and local contractions of the form
\begin{equation}\label{contract2}
\begin{array}{ll}
\max_{i\in \left\lbrace 1,\cdots ,n\right\rbrace }|\delta v^{\rho,l,k}_i|_{C^1\left((l-1,l), H^{m}\right)}
\leq \frac{1}{2}\max_{i\in \left\lbrace 1,\cdots ,n\right\rbrace }|\delta v^{\rho,l,k-1}|_{C^1\left((l-1,l), H^{m}\right)},
\end{array} 
\end{equation}
and where the time step size depends only on the viscosity $\nu >0$ the dimension $n$ (we assumed $n=3$ at several steps of the argument), and the size of the initial data $v^{r,\rho,l-1}_i(l-1,.)$.
Recall that for $l=1$ we define
$\mathbf{v}^{\rho,l-1}(l-1,.)=\mathbf{h}(.)$. Next we consider the sequence $\left( v^{\rho,l,k}_i\right)_{k}$ and represent each member of this list in the form
\begin{equation}
v^{\rho,l,k}_i=v^{\rho,l-1}_i(l-1,.)+\sum_{p=1}^{k}\delta v^{\rho,l,p}_i.
\end{equation}
Classical analysis for scalar parabolic equations shows that 
\begin{equation}
v^{\rho,l,k}_i\in C^{1,2}\left( [l-1,l]\times {\mathbb R}^n\right),
\end{equation}
assuming that $v^{\rho,l-1}_i(l-1,.)\in C^2$ (although a weaker assumption may be used here). 
Hence we have
\begin{equation}
v^{\rho,l,k}_i\in C^{1,2}\left( [l-1,l]\times {\mathbb R}^n\right)\cap C^1\left( [l-1,l],H^2\right) 
\subset C^1\left( [l-1,l], C^2_0\left({\mathbb R}^n \right)\right) , 
\end{equation}
where the latter space $C^2_0\left({\mathbb R}^n \right)$ is the space of twice differentiable functions where the functions themselves and there derivatives up to second order vanish at spatial infinity. Note that the latter space is a closed function space (this is true for $C^2_0(X)$ if $X$ is a locally compact Hausdorff space, and this certainly holds for ${\mathbb R}^n$ equipped with the standard topology). From the contraction results it follows that the sequence $(v^{\rho,l,k}_i)_k$ is a Cauchy sequence (for all $1\leq i\leq n$) and has a classical limit $v^{\rho,l}_i\in C^1\left( [l-1,l], C^2_0\left({\mathbb R}^n \right)\right) $ for all $1\leq i\leq n$. Finally we show that this function is indeed a classical local solution. For all $1\leq i\leq n$ we have
\begin{equation}\label{Navleray22222}
\left\lbrace \begin{array}{ll}
\frac{\partial v^{\rho,l,k}_i}{\partial \tau}-\rho_l\nu\sum_{j=1}^n \frac{\partial^2 v^{\rho,l,k}_i}{\partial x_j^2} 
+\rho_l\sum_{j=1}^n v^{\rho,l,k}_j\frac{\partial v^{\rho,l,k}_i}{\partial x_j}=\\
\\ \rho_l\sum_{j=1}^n \delta v^{\rho,l,k}_j\frac{\partial v^{\rho,l,k}_i}{\partial x_j}\\
\\
+\rho_l\sum_{j,m=1}^n\int_{{\mathbb R}^n}\left( \frac{\partial}{\partial x_i}K_n(x-y)\right) \sum_{j,m=1}^n\left( \frac{\partial v^{\rho,l,k}_m}{\partial x_j}\frac{\partial v^{\rho,l,k}_j}{\partial x_m}\right) (\tau,y)dy\\
\\
-\rho_l\sum_{j,m=1}^n\int_{{\mathbb R}^n}\left( \frac{\partial}{\partial x_i}K_n(x-y)\right) \sum_{j,m=1}^n\left( \frac{\partial \delta v^{\rho,l,k}_m}{\partial x_j}\frac{\partial v^{\rho,l,k}_j}{\partial x_m}\right) (\tau,y)dy\\
\\
-\rho_l\sum_{j,m=1}^n\int_{{\mathbb R}^n}\left( \frac{\partial}{\partial x_i}K_n(x-y)\right) \sum_{j,m=1}^n\left( \frac{\partial \delta v^{\rho,l,k}_m}{\partial x_j}\frac{\partial v^{\rho,l,k-1}_j}{\partial x_m}\right) (\tau,y)dy,\\
\\
\mathbf{v}^{\rho,l,k}(l-1,.)=\mathbf{v}^{\rho,l-1}(l-1,.).
\end{array}\right.
\end{equation}
Since we shall see that $\delta v^{\rho,l,k}_i\downarrow 0$ with respect to strong norms, and a fortiori with respect to the norm 
\begin{equation}
 |f|_{C^{1,2}\left( [l-1,l]\times {\mathbb R}^n\right)  }:=\sup_{s\in[l-1,l]\times {\mathbb R}^n}\left({\Big |}f(s,.){\Big |}+{\Big |}D_{\tau}f(s,.){\Big |}+\sum_{|\alpha|\leq 2}{\Big |}D^{\alpha}_{x}f(s,.){\Big |}\right) 
 \end{equation}
(for all multiindices $\alpha=(\alpha_1,\cdots,\alpha_n)$ with nonnegative entries $\alpha_i\geq 0$ and with $|\alpha|=\sum_{i=1}^n\alpha_i$), the function $v^{\rho,l}_i,~1\leq i\leq n$ is indeed a local classical solution of the incompressible Navier Stokes equation.

\section{A simplified scheme (local and global)}  
The considerations of the preceding section motivate the search for local contraction results with respect to strong norms. The description of the transition from local to global schemes via control functions $r^l_i$ as considered in \cite{K3,KNS} looks rather complicated, but  can it be simplified. The simplified control function for a global scheme will be defined in the last section. In this section we define a simplified local scheme (simplified compared to the scheme considered in \cite{KB2}). We show that we can introduce simplified control functions and avoid the solution of rather complicated local equations for the control function (as indicated in remark (\ref{remarksimplecontrol}) above). We note that these considerations can be generalized to models with variable viscosity. The advantage of the scheme considered in this paper is that we do not need the adjoint of the fundamental solutions and a priori estimates but can deal with convolutions and Gaussians directly.  This simplifies several estimates. 
 For $m\geq 2$ and time step $l\geq 1$ and small $\rho_l\gtrsim \frac{1}{l}$ we consider a time-local functional scheme
\begin{equation}
v^{*,\rho, l}_i:=v^{*,\rho,l-1}_i(l-1,.)+\sum_{k=1}^{\infty} \delta v^{*,\rho,l,k}_i,
\end{equation}
where for $1\leq i\leq n$  $v^{*,\rho,l-1}_i(l-1,.)\in H^m\cap C^m$, and   
$\delta v^{*,\rho,l,k}= v^{\rho,l,k}_i- v^{*,\rho,l,k-1}_i$ along with $v^{*,\rho,l,-1}:=v^{*,\rho,l-1}_i(l-1,.)$, and where the functions $v^{*,\rho,l,k}_i$ satisfy the equations
\begin{equation}\label{Navleray*}
\left\lbrace \begin{array}{ll}
\frac{\partial v^{*,\rho,l,k}_i}{\partial \tau}-\rho_l\nu\sum_{j=1}^n \frac{\partial^2 v^{*,\rho,l,k}_i}{\partial x_j^2} 
=-\rho_l\sum_{j=1}^n v^{*,\rho,l,k-1}_j\frac{\partial v^{*,\rho,l,k-1}_i}{\partial x_j}\\
\\
+\rho_l\sum_{j,m=1}^n\int_{{\mathbb R}^n}\left( \frac{\partial}{\partial x_i}K_n(x-y)\right) \sum_{j,m=1}^n\left( \frac{\partial v^{*,\rho,l,k-1}_m}{\partial x_j}\frac{\partial v^{*,\rho,l,k-1}_j}{\partial x_m}\right) (\tau,y)dy,\\
\\
\mathbf{v}^{*,\rho,l,k}(l-1,.)=\mathbf{v}^{*,\rho,l-1}(l-1,.).
\end{array}\right.
\end{equation}
 We are interested in the case $n\geq 3$, and for $l=1$ we define $\mathbf{v}^{*,\rho,l-1}(l-1,.)=\mathbf{h}(.)$. The upper script $*$ indicates the difference to the scheme of the introduction: in the present scheme all the information of the convection term for the fist substep is taken from the previous time step. Especially, we are interested in the case $n= 3$, but our methods work in all dimension $n\geq 3$. For $n=2$ the Laplacian kernel is different such that this case should be considered separately. We shall assume $n\geq 3$ in the following where we emphasize that the estimates hold in the case $n=3$ and can be adapted (with some modifications) to the case $n=2$.  If $S$ denotes the right side source term of the first equation of (\ref{Navleray*}), i.e., if we define
\begin{equation}
\begin{array}{ll}
S(\tau,y):=-\rho_l\sum_{j=1}^n v^{*,\rho,l,k-1}_j(\tau,y)\frac{\partial v^{*,\rho,l,k-1}_i}{\partial x_j}(\tau,y)+\\
\\
\rho_l\sum_{j,m=1}^n\int_{{\mathbb R}^n}\left( \frac{\partial}{\partial x_i}K_n(y-z)\right) \sum_{j,m=1}^n\left( \frac{\partial v^{*,\rho,l,k-1}_m}{\partial x_j}\frac{\partial v^{*,\rho,l,k-1}_j}{\partial x_m}\right) (\tau,z)dz,
\end{array}
\end{equation}
then on the domain $\left[l-1,l\right]\times {\mathbb R}^n$ we see that we have the representation
 \begin{equation}\label{solrep1}
 \begin{array}{ll}
 v^{*,\rho,l,k}_i(\tau,x)=\int_{{\mathbb R}^3}v^{*,\rho,l-1}(l-1,y)G_l(\tau-(l-1),x-y)dy\\
 \\
 +\int_{l-1}^{\tau}\int_{{\mathbb R}^n}S(s,y)G_l(\tau-s,x-y)dyds,
 \end{array}
 \end{equation}
where we recall that $G_{l}$ is the fundamental solution of the heat equation $\frac{\partial u}{\partial t}-\rho_l\nu\Delta u=0$ on $[l-1,l]\times {\mathbb R}^n$. Compared to the previous scheme we observe the difference that the integral expressions in (\ref{solrep1}) are convolutions. A priori estimates are easier at hand, and we do not need the adjoint of the fundamental solution in order to shift derivatives. Nevertheless, keeping book of the additional source term we may adopt arguments of \cite{KB1} and \cite{KB2} and arrive at the same result by simpler considerations. 
Next we give the reason for the possibility of simplified control functions.  
Having defined the controlled value functions $v^{r,*\rho,l-1}_i(l-1,.)$ and the control functions $r^{l-1}_i(l-1,.)$ up to step $l-1$ we may consider a local uncontrolled iteration scheme for the functions $v^{r^{l-1},*,\rho,l,k}_i$ which satisfies the equations
\begin{equation}\label{Navleray*?}
\left\lbrace \begin{array}{ll}
\frac{\partial v^{r^{l-1},*,\rho,l,k}_i}{\partial \tau}-\rho_l\nu\sum_{j=1}^n \frac{\partial^2 v^{r^{l-1},*,\rho,l,k}_i}{\partial x_j^2} 
=-\rho_l\sum_{j=1}^n v^{r^{l-1},*,\rho,l,k-1}_j\frac{\partial v^{r^{l-1},*,\rho,l,k-1}_i}{\partial x_j}\\
\\
+\rho_l\sum_{j,m=1}^n\int_{{\mathbb R}^n}\left( \frac{\partial}{\partial x_i}K_n(x-y)\right) \sum_{j,m=1}^n\left( \frac{\partial v^{r^{l-1},*,\rho,l,k-1}_m}{\partial x_j}\frac{\partial v^{r^{l-1},*,\rho,l,k-1}_j}{\partial x_m}\right) (\tau,y)dy,\\
\\
\mathbf{v}^{r^{l-1},*,\rho,l,k}(l-1,.)=\mathbf{v}^{r,*,\rho,l-1}(l-1,.)=\mathbf{v}^{*,\rho,l-1}(l-1,.)+\mathbf{r}^{l-1}(l-1,.),
\end{array}\right.
\end{equation}
where $\mathbf{r}^{l-1}(l-1,.)=\left(r^{l-1}_1(l-1,.),\cdots ,r^{l-1}_n(l-1,.)\right)^T$.
Note that this is a local iteration scheme for an uncontrolled Navier Stokes equation with controlled data.
We may then apply local contraction results which are based on a standard incompressible Navier Stokes equation (not the complicated controlled Navier Stokes equation) in order to obtain solutions of (\ref{Navleray*?}), and then we may define
\begin{equation}
r^l_i=r^{l-1}_i(l-1,.)+\delta r^{l}_i,
\end{equation}
when the local iteration with respect to the local iteration index $k$ is completed. We then define
\begin{equation}
v^{r,*,\rho,l}_i(\tau,x)=v^{r^{l-1},*,\rho,l}_i(\tau,x)+\delta r^l_i(\tau,x) \mbox{ for all $(\tau,x)\in (l-1,l]\times {\mathbb R}^n$}.
\end{equation}

In order to get global regular existence results the simplest choice for the increment $\delta r^l_i$ may be as in (\ref{simpleincrement1}). The strategy is  to observe a) the persistence of a global upper bound for the controlled velocity value functions, i.e., to observe that for each order of regularity $m\geq 2$ we have a constant $C>0$ such that
\begin{equation}\label{controlled1}
{\big |}v^{r,\rho,l}_i(l,.){\big |}_{H^m\cap C^m}\leq C
\end{equation}
for all $1\leq i\leq n$ and for all time step numbers $l\geq 1$ (,i.e., the constant $C>0$ can be chosen independently of the time step number $l$), and then to show b) (in the case of a simple control function defined as in (\ref{simpleincrement1}) that
\begin{equation}\label{controlupperbound1}
{\big |}r^{l}_i(l,.){\big |}_{H^m\cap C^m}\leq \tilde{C}+l\tilde{C}
\end{equation}
for all $1\leq i\leq n$ and for some constant $\tilde{C}>0$ which is indpendent of the time step number $l\geq 1$ as well. The upper bound \ref{controlupperbound1} is obtained via the estimate for the controlled scheme in (\ref{controlled1}) together with the estimates for the increments (\ref{simpleincrement1}) (which follow straightforwardly. The uncontrolled scheme functions $v^{\rho,l}_i=v^{r,\rho,l}_i-r^l_i$ then satisfy
\begin{equation}\label{uncontrolled1}
{\big |}v^{\rho,l}_i(l,.){\big |}_{H^m\cap C^m}\leq C_0+lC_0
\end{equation}
for some constant $C_0>0$ which is independent of the time step number $l\geq 1$. Time-local existence via contraction then implies that
\begin{equation}
\sup_{\tau\in [l-1,l]}{\big |}v^{\rho,l}_i(l,.){\big |}_{H^m\cap C^m}\leq C_1+lC_1
\end{equation}
for some constant $C_1$ independent of the time step number $l\geq 1$. This leads to global existence for the uncontrolled incompressible Navier Stokes equation. The estimate (\ref{controlupperbound1}) can be improved using more sophisticated control function schemes (as described above) or by using an auto-controlled scheme or mixtures of both. We come back to this issue in section 5, and describe the application of various variations of control functions below. As we have introduced a simplified scheme now, we mention that the different notation $v^{r,*,\rho,l,k}_i$ for the simplified scheme and $v^{r,\rho,l,k}_i$ for the local scheme used in \cite{KB2} is relevant only for the time-local arguments. The local contraction results imply uniqueness in regular function space for regular data such that in the limit we always have
\begin{equation}
v^{r,\rho,l}_i=\lim_{k\uparrow \infty}v^{r,\rho,l,k}_i=v^{r,*,\rho,l}_i=\lim_{k\uparrow infty}v^{r,*\rho,l}_i.
\end{equation}
This means that in the context of the global arguments we can use $v^{r,\rho,l}_i$ and $v^{r,*,\rho,l}_i$ interchangeably, as these are two names for the same function. This reminds us that local contraction arguments using the adjoint of the fundamentals solution lead directly to generalisations of the results of this paper to models with variable viscosity. 
\section{Statement of local contraction result}
It is essential to obtain local contraction results with respect to the norm
\begin{equation}
{\big |}f{\big |}_{C^{0}\left((l-1,l), H^{m}\right) }:=\sum_{|\alpha|\leq m}\sup_{\tau\in [l-1,l]}{\Big |}D^{\alpha}_xf(\tau,.){\Big |}_{L^2\left({\mathbb R}^n \right) }.
\end{equation}
for some $m\geq 2$.
We can also get local contraction results with respect to the norm
\begin{equation}
\begin{array}{ll}
{\big |}f{\big |}_{C^{1}\left((l-1,l), H^{m}\right) }:=\sum_{|\alpha|\leq m}\sup_{\tau\in [l-1,l]}{\Big |}D^{\alpha}_xf(\tau,.){\Big |}_{L^2\left({\mathbb R}^n \right) }\\
\\
\sum_{|\alpha|\leq m}\sup_{\tau\in [l-1,l]}{\Big |}D^{\alpha}_xD_{\tau}f(\tau,.){\Big |}_{L^2\left({\mathbb R}^n \right) }
\end{array}
\end{equation}
for some $\geq 2$.
\begin{rem}
There are variations of norms which we may define where we can obtain similar local contraction results. We may define function spaces
\begin{equation}
\begin{array}{ll}
{\big |}f {\big |}_{H^{m}\times H^{2m}}:=\sum_{p=0}^m{\Big |}\frac{\partial^p}{\partial \tau^p}f{\Big |}_{L^2\left([l-1,l]\times {\mathbb R}^n \right) }\\
\\
+\sum_{|\alpha|\leq 2m}{\Big |}D^{\alpha}_xf{\Big |}_{L^2\left([l-1,l]\times {\mathbb R}^n \right) },
\end{array}
\end{equation}
or we may even consider related function spaces with the stronger norm  (with an abuse of language)
\begin{equation}
{\big |}f {\big |}_{H^{m}\times H^{2m}}:=\sum_{p=0}^m\sum_{|\alpha|\leq 2m}{\Big |}\frac{\partial^p}{\partial \tau^p}D^{\alpha}_xf{\Big |}_{L^2\left([l-1,l]\times {\mathbb R}^n \right) }
\end{equation}
such that mixed derivatives of spatial and time variables are contained.
\end{rem}

Our approximations have classical derivatives, such that the stronger norms
\begin{equation}
\begin{array}{ll}
{\big |}f{\big |}_{C^{m}\left((l-1,l),H^{2m}\right) }:=\sup_{\tau\in [l-1,l]}\sum_{p=0}^m{\Big |}\frac{\partial^p}{\partial\tau^p}f{\Big |}_{L^2\left( {\mathbb R}^n \right) }\\
\\
+\sup_{\tau\in [l-1,l]}\sum_{|\alpha|\leq 2m}{\Big |}D^{\alpha}_xf{\Big |}_{L^2\left( {\mathbb R}^n \right) },
\end{array}
\end{equation}
or (defining a variation with mixed derivatives)
\begin{equation}
{\big |}f{\big |}_{C^{m}\times H^{2m}}:=\sup_{\tau\in [l-1,l]}\sum_{p=0}^m\sum_{|\alpha|\leq 2m}{\Big |}\frac{\partial^p}{\partial t^p}D^{\alpha}_xf{\Big |}_{L^2\left(  {\mathbb R}^n \right) }
\end{equation}
are possible alternatives for our construction.
The functions $v^{*,\rho,l,k}_i$ are defined locally on $[l-1,l]\times {\mathbb R}^n$. For each time-step $l\geq 1$ we may identify this function with the trivial extension $v^{*,\rho,l,k}_i:{\mathbb R}\times {\mathbb R}^n\rightarrow {\mathbb R}$ (denoted by the same symbol $v^{*,\rho,l,k}_i$ of notation for the sake of simplicity) which is defined to be zero on the complementary domain $\left( {\mathbb R}\setminus [l-1,l]\right) \times {\mathbb R}^n$. We may measure these extensions in $L^2\left({\mathbb R}^{n+1}\right)$ naturally  or we may consider time and spatial variables separately. We consider local contraction results with respect to the $C^{m}\left( (l-1,l), H^{2m}\right) $-norms and with respect to $H^{2m}\left((l-1,l),{\mathbb R}^n\right) $ norms as well. The latter type of norms are appropriate to for $L^2$-estimate of convolutions and derivatives of convolutions with respect to time and space. In order to motivate the local contraction results let us have a first closer look at the relation to the global bound of the Leray projection term. Given the local solution $v^{*,\rho,l}_i,~1\leq i\leq n$, locally, the Leray projection term can be estimated pointwise by
\begin{equation}
\begin{array}{ll}
\rho_l\sum_{j,m=1}^n\int_{{\mathbb R}^n}\left( \frac{\partial}{\partial x_i}K_n(x-y)\right) \sum_{j,m=1}^n\left( \frac{\partial v^{*,\rho,l}_m}{\partial x_j}\frac{\partial v^{*,\rho,l}_j}{\partial x_m}\right) (\tau,y)dy\\
\\
\leq \rho_l\sum_{j,m=1}^n\int_{{\mathbb R}^n} {\Big |}\frac{\partial}{\partial x_i}K_n(x-y){\Big |}\times\\
\\
\times \left( \sum_{j,m=1}^n\frac{1}{2}\left( \frac{\partial v^{*,\rho,l}_m}{\partial x_j}\right)^2(\tau,y)+\frac{1}{2}\left( \frac{\partial v^{*,\rho,l}_j}{\partial x_m}\right)^2 (\tau,y)\right) dy.
\end{array}
\end{equation}
Hence looking at contraction results for squared $L^2$-norms is quite natural in order to obtain a global linear bound for the Leray projection term. We shall do so. Now assume that we have a linear bound for the squared initial data at time step $l\geq 1$, i.e., that we have a bound
\begin{equation}\label{initiall}
|v^{*,\rho,l-1}_i(l-1,.)|^2_{H^2}\leq C^{l-1}:=C+(l-1)C
\end{equation}
for some constant $C>0$ (note the square in (\ref{initiall}). This implies that we have
\begin{equation}\label{initiallsimple}
|v^{*,\rho,l-1}_i(l-1,.)|_{H^2}\sim \frac{1}{\sqrt{l-1}}
\end{equation}
for $l\geq 2$. For the controlled scheme we shall see that 
\begin{equation}
\sup_{\tau\in [l-1,l]}|v^{r,*,\rho,l}_i(\tau,.)-v^{r,*,\rho,l-1}_i(l-1,.)|_{H^2}\sim \frac{1}{\sqrt{l}},
\end{equation}
and we shall see that this implies that
\begin{equation}\label{initialsizel}
|v^{r,*,\rho,l}_i(l,.)|^2_{H^2}\leq C^{l}=C+lC
\end{equation}
and $r^l_i\sim l$ inductively. 
This implies in turn that
\begin{equation}\label{initialsizelor}
|v^{*,\rho,l}_i(l,.)|^2_{H^2}\leq C^{l}=C+lC
\end{equation}
We shall see that for a step size
\begin{equation}\label{stepsizel}
0< \rho_l\leq 2n^2(C^{l-1}+1)C_GC_{K}C_s
\end{equation}
we shall have a linear growth of the Leray projection term. Here, the constant $C^{l-1}$ on the right side of (\ref{stepsizel}) above is given by (\ref{initiall}). The constants on the right side of (\ref{stepsizel}) are a priori known, and are defined for $n=3$ by
\begin{equation}
\begin{array}{ll}
C_G=\max\left\lbrace {\big |}G_l{\big |}_{L^1\times L^1}, 1\right\rbrace \\
\\
C_K=\max\left\lbrace\int_{B_1(0)}{\big |}K_{n,i}(z){\big |}dz+\sqrt{\int_{{{\mathbb R}^3}\setminus B_1(0)}{\big |}K_{n,i}(z){\big |}^2dz},1\right\rbrace ,
\end{array}
\end{equation}
and $C_s$ is the constant for weighted products in $L^2$ in case $2=s>\frac{n}{2}=3/2$, where  in the  general case for $s>\frac{n}{2}$ we have a constant $C_s>0$  such that for all $x\in {\mathbb R}^n$ the function
\begin{equation}
u(x):=\int(1+|y|^2)^{-s/2}v(x-y)w(y)dy
\end{equation}
with functions $v,w\in L^2$ satisfies
\begin{equation}
|u|_{L^2}\leq C_s|v|_{L^2}|w|_{L^2}.
\end{equation}
Again without loss of generality we may assume that $C_s\geq 1$.
 
We have the following local contraction result.
\begin{thm}\label{mainthm}
Let $n=3$. Assume that $v^{*,\rho,l-1}_i(l-1,.)\in H^2$ for $1\leq i\leq n$ with $$|v^{*,\rho,l-1}_i(l-1,.)|^2_{H^2}\leq C^{l-1}.$$
Then we have local contraction results with respect to $C^0\times H^{2}$-norm. For related stronger inductive assumptions on $v^{*,\rho,l-1}_i(l-1,.)$ we have related local contraction results with respect to the alternative norms listed above. More precisely, there is an integer $c(n)=c(3)$ (depending only on dimension $n$ in generalized versions of the argument) such that for a time-step size \begin{equation}
\rho_l\leq \frac{1}{c(n)(C^{l-1}+1)C_GC_{K}C_s}
\end{equation}
(along with  $C_G,C_K$ and $C_s$ defined above) for $k\geq 1$ we have 
\begin{equation}\label{contract}
\begin{array}{ll}
\max_{i\in \left\lbrace 1,\cdots ,n\right\rbrace }|\delta v^{*,\rho,l,k}_i|_{C^0\left( (l-1,l), H^2\right)}
\leq \frac{1}{2}\max_{i\in \left\lbrace 1,\cdots ,n\right\rbrace }|\delta v^{*,\rho,l,k-1}_i|_{C^0\left( (l-1,l), H^2\right)}.
\end{array} 
\end{equation}
For the same time-step size we also have
\begin{equation}
\begin{array}{ll}
\max_{i\in \left\lbrace 1,\cdots ,n\right\rbrace }|\delta v^{*,\rho,l,k}_i|_{C^0\left( (l-1,l), H^2\right)}^2
\leq \frac{1}{2}\max_{i\in \left\lbrace 1,\cdots ,n\right\rbrace }|\delta v^{*,\rho,l,k-1}|_{C^0\left( (l-1,l), H^2\right)}^2.
\end{array} 
\end{equation}
For $k=1$ we may assume that $c(3)\geq 1$ is chosen such that
\begin{equation}
\begin{array}{ll}
\max_{i\in \left\lbrace 1,\cdots ,n\right\rbrace }|\delta v^{*,\rho,l,1}_i|_{C^0\left( (l-1,l), H^2\right)}\\
\\
=\max_{i\in \left\lbrace 1,\cdots ,n\right\rbrace }|v^{*,\rho,l,1}-v^{*,\rho,l-1}(l-1,.)|_{C^0\left( (l-1,l), H^2\right) }\leq \frac{1}{4}.
\end{array}
\end{equation}
and similar for the squared norm.
\end{thm}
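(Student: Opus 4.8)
The plan is to derive an integral (Duhamel) representation for each increment $\delta v^{*,\rho,l,k}_i$ and then estimate its $H^2$-norm by the $H^2$-norm of $\delta v^{*,\rho,l,k-1}_i$, extracting a factor which is $\leq\frac12$ under the stated step-size restriction. First I would subtract the defining equation (\ref{Navleray*}) at substep $k$ from the same equation at substep $k-1$. Because both substeps share the same linear parabolic operator $\partial_\tau - \rho_l\nu\Delta$ and the same initial data $\mathbf{v}^{*,\rho,l-1}(l-1,.)$, the initial data cancels and $\delta v^{*,\rho,l,k}_i$ solves an inhomogeneous heat equation with zero initial condition whose source is $\rho_l$ times the difference of the two right-hand sides. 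Using the fundamental solution $G_l$ this gives
\begin{equation}\label{propduhamel}
\delta v^{*,\rho,l,k}_i(\tau,x)=\int_{l-1}^{\tau}\int_{{\mathbb R}^n}\delta S^{k-1}(s,y)\,G_l(\tau-s,x-y)\,dy\,ds,
\end{equation}
where $\delta S^{k-1}$ is the difference of the convection terms plus the difference of the Leray projection terms, each evaluated at substep $k-1$ versus $k-2$. Crucially every term in $\delta S^{k-1}$ is, after telescoping products via $ab-a'b'=(a-a')b+a'(b-b')$, a sum of bilinear expressions in which exactly one factor is an increment $\delta v^{*,\rho,l,k-1}_m$ (or a spatial derivative thereof) and the remaining factor is a derivative of a previous iterate. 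This is the linearization that makes contraction possible.

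The heart of the estimate is then to bound the $H^2$-norm of the right side of (\ref{propduhamel}). For the convection difference terms I would apply $D^\alpha_x$ with $|\alpha|\leq 2$, move the derivatives onto $G_l$ or onto the factors as convenient, and use the Young-type convolution inequality together with the weighted $L^2$-product estimate quantified by $C_s$ (valid since $s=2>n/2=3/2$ for $n=3$) to control the product of two $H^2$ functions in $L^2$. The $L^1$-norm of $G_l$ contributes the constant $C_G$. For the Leray projection difference terms I would split the kernel $\partial_{x_i}K_n$ into its singular near part on $B_1(0)$, handled in $L^1$, and its decaying far part on ${\mathbb R}^3\setminus B_1(0)$, handled in $L^2$; this is exactly the decomposition encoded in the definition of $C_K$. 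Each such term again carries a single increment factor and is bilinear, so after the convolution estimates one obtains
\begin{equation}\label{propcontr}
\max_i|\delta v^{*,\rho,l,k}_i|_{C^0((l-1,l),H^2)}\leq \rho_l\, c(n)\,(C^{l-1}+1)\,C_G C_K C_s\,\max_i|\delta v^{*,\rho,l,k-1}_i|_{C^0((l-1,l),H^2)},
\end{equation}
where the factor $(C^{l-1}+1)$ arises because the non-increment factor is a previous iterate whose $H^2$-norm is controlled inductively by the data bound $C^{l-1}$ plus the (small) accumulated increments. The claimed step size $\rho_l\leq \frac{1}{c(n)(C^{l-1}+1)C_GC_KC_s}$ then forces the prefactor in (\ref{propcontr}) to be at most $\tfrac12$, which is the contraction. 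The squared-norm version follows by the same reasoning applied to $|\cdot|^2$, or simply by squaring (\ref{propcontr}) and re-absorbing constants into $c(n)$; and the $k=1$ base case is arranged by enlarging $c(n)$ so that the single-step bound on $|v^{*,\rho,l,1}-v^{*,\rho,l-1}(l-1,.)|$ is $\leq \tfrac14$, which again reduces to estimating one Duhamel integral whose source is built from the fixed data.

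The main obstacle I anticipate is bookkeeping rather than any single hard inequality: one must verify that after applying up to two spatial derivatives to the bilinear difference terms, every resulting summand still contains exactly one increment factor measured in $H^2$ and one previous-iterate factor whose $H^2$-norm is uniformly bounded by a constant of order $(C^{l-1}+1)$, so that the product lands in $L^2$ via the $C_s$-estimate without losing derivatives. The Leray term is the delicate one because the kernel $\partial_{x_i}K_n$ is not integrable at infinity in $L^1$, which is precisely why the $L^1$-near / $L^2$-far splitting and the constant $C_K$ are introduced; getting the two derivatives distributed so that the singular part meets an $L^1$ convolution and the tail meets an $L^2$ convolution is the step requiring care. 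Everything else is a routine application of the convolution and product estimates already set up before the theorem.
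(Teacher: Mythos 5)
Your proposal matches the paper's own proof in all essentials: the Duhamel representation of $\delta v^{*,\rho,l,k}_i$ with cancelled initial data, the telescoping of the convection and Leray projection differences so that each summand carries exactly one increment factor, the partition-of-unity splitting of both the Gaussian $G_l$ and the kernel $\partial_{x_i}K_n$ into an $L^1$ near part and an $L^2$ far part handled by the weighted product estimate with constant $C_s$, Young's inequality for the convolutions, and the inductive bound $C^l_k\leq C^{l-1}+1$ on the iterate norms. The paper carries this out in exactly this order (representation (\ref{solrepk}), the $L^2$, $L^2\times H^1$ and $L^2\times H^2$ estimates), so no further comparison is needed.
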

We shall see that this local contraction result is indeed essential together with linear growth of the Leray projection term enforced by a control function in order to get a global scheme. Note that for the coefficient function evaluated at $t\geq 0$ (original time coordinates) the embedding $C^{\alpha}\subset H^2$ for dimension $n=3$ (uniformly with respect to time $t\geq 0$) ensures that given the solution, this same solution can be represented in terms of the fundamental solution of certain scalar parabolic equations which involve the solution in  the first order terms. However, in this paper we shall see that we can even improve this for the controlled scheme observing that a certain form of polynomial decay is inherited. This may not true for local schemes in general, but for all the local schemes considered in \cite{KB2} and here we may observe a similar phenomenon for the higher order correction terms
\begin{equation}
\delta v^{*,\rho,l,k}_i,~k\geq 2.
\end{equation}
We have a closer look at this phenomenon in the next section.
 We say that a function $f:{\mathbb R}^n\rightarrow {\mathbb R}$ with $f\in C^m$ (i.e., $f$ has continuous partial derivatives up to order $m$) has polynomial decay of order $m\geq 1$ up to the derivatives of order $m\geq 1$ if for all multivariate partial derivatives $D^{\alpha}_xf$ with $|\alpha|=\sum_{i=1}^n|\alpha_i|$ and $|\alpha|\leq m$ we have for all $x\in {\mathbb R}^n$
\begin{equation}
{\big |}D^{\alpha}_xf(x){\big |}\leq \frac{C_{\alpha}}{1+|x|^m}
\end{equation}
for some $C_{\alpha}<\infty$. We may then use techniques considered in \cite{KE2} in order to prove that polynomial decay of a certain order $m\geq 2$ is inherited by the higher order increments $\delta v^{*,\rho,l,k}_i$ of the local scheme. 
Next we mention that theorem \ref{mainthm} holds for stronger norms as well with slightly adapted step size. We mention that we assume $n=3$, because we use this assumption at several steps of the proof. However, the argument can be adapted to larger dimension $n>3$ and also to $n=1,2$, but for these extensions additional considerations have to be made. 
We have the following local contraction result.
\begin{thm}\label{mainthm2}
Let $n=3$. Assume that $v^{*,\rho,l-1}_i(l-1,.)\in H^m$ for $1\leq i\leq n$ with $$|v^{*,\rho,l-1}_i(l-1,.)|^2_{H^m}\leq C^{l-1}.$$
Then we have local contraction results with respect to $C^0\times H^{m}$-norm. For related stronger inductive assumptions on $v^{*,\rho,l-1}_i(l-1,.)$ we have related local contraction results with respect to the alternative norms listed above. More precisely, there is an integer $c(n)=c(3)$ (depending only on dimension $n$ in generalized versions of the argument) such that for a time-step size \begin{equation}
\rho_l\leq \frac{1}{c(n)(C^{l-1}+1)C_GC_{K}C_s}
\end{equation}
(along with  $C_G,C_K$ and $C_s$ defined above) for $k\geq 1$ we have 
\begin{equation}\label{contractm}
\begin{array}{ll}
\max_{i\in \left\lbrace 1,\cdots ,n\right\rbrace }|\delta v^{*,\rho,l,k}_i|_{C^0\left( (l-1,l), H^m\right)}
\leq \frac{1}{2}\max_{i\in \left\lbrace 1,\cdots ,n\right\rbrace }|\delta v^{*,\rho,l,k-1}_i|_{C^0\left( (l-1,l), H^m\right)}.
\end{array} 
\end{equation}
For the same time-step size we also have
\begin{equation}
\begin{array}{ll}
\max_{i\in \left\lbrace 1,\cdots ,n\right\rbrace }|\delta v^{*,\rho,l,k}_i|_{C^0\left( (l-1,l), H^m\right)}^2
\leq \frac{1}{2}\max_{i\in \left\lbrace 1,\cdots ,n\right\rbrace }|\delta v^{*,\rho,l,k-1}|_{C^0\left( (l-1,l), H^m\right)}^2.
\end{array} 
\end{equation}
For $k=1$ we may assume that $c(3)\geq 1$ is chosen such that
\begin{equation}
\begin{array}{ll}
\max_{i\in \left\lbrace 1,\cdots ,n\right\rbrace }|\delta v^{*,\rho,l,1}_i|_{C^0\left( (l-1,l), H^m\right)}\\
\\
=\max_{i\in \left\lbrace 1,\cdots ,n\right\rbrace }|v^{*,\rho,l,1}-v^{*,\rho,l-1}(l-1,.)|_{C^0\left( (l-1,l), H^m\right)}\leq \frac{1}{4}.
\end{array}
\end{equation}
and similar for the squared norm. Similar statements can be made for $C^p\left( (l-1,l), H^m\right)$-norms for $p\geq 1$. 
\end{thm}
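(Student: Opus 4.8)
\emph{Proof strategy.} The decisive structural feature of the simplified scheme \eqref{Navleray*} is that at each substep $k$ the function $v^{*,\rho,l,k}_i$ solves a \emph{linear} inhomogeneous heat equation whose source $S$ depends only on the previous iterate $v^{*,\rho,l,k-1}$. Consequently $v^{*,\rho,l,k}_i$ admits the Duhamel representation \eqref{solrep1}, a genuine convolution against the Gaussian $G_l$ rather than against the adjoint of a variable-coefficient fundamental solution. The plan is to estimate the increments $\delta v^{*,\rho,l,k}_i$ directly by Young's inequality. First I would subtract the representations \eqref{solrep1} for $k$ and $k-1$; since every iterate carries the same datum $\mathbf v^{*,\rho,l-1}(l-1,\cdot)$, the free term cancels and, moving $D^\alpha_x$ onto the source (differentiation commutes with convolution),
\[
D^\alpha_x\delta v^{*,\rho,l,k}_i(\tau,\cdot)=\int_{l-1}^{\tau}\bigl(D^\alpha_x\delta S^{k-1}(s,\cdot)\bigr)*G_l(\tau-s,\cdot)\,ds,
\]
where $\delta S^{k-1}=S[v^{*,\rho,l,k-1}]-S[v^{*,\rho,l,k-2}]$. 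Minkowski's integral inequality followed by Young's convolution inequality $|g*G_l(\sigma,\cdot)|_{L^2}\le|g|_{L^2}\,|G_l(\sigma,\cdot)|_{L^1}$, together with $|G_l(\sigma,\cdot)|_{L^1}=1$ (absorbed into $C_G$) and $|\tau-(l-1)|\le1$, yields
\[
|D^\alpha_x\delta v^{*,\rho,l,k}_i(\tau,\cdot)|_{L^2}\le C_G\,\sup_{s\in[l-1,l]}|D^\alpha_x\delta S^{k-1}(s,\cdot)|_{L^2}.
\]

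The core of the argument is the bilinear bound on $\delta S^{k-1}$. I would write each quadratic difference telescopically, $ab-a'b'=(a-a')b+a'(b-b')$, so that $\delta S^{k-1}$ is \emph{linear} in $\delta v^{*,\rho,l,k-1}$ with coefficients built from $v^{*,\rho,l,k-1}$ and $v^{*,\rho,l,k-2}$; this is exactly what turns boundedness into contraction. The convection part is then controlled by the multiplication property of $H^m$ (valid since $m\ge2>\tfrac n2=\tfrac32$ for $n=3$), with the weighted bilinear constant $C_s$, giving a bound $\lesssim\rho_l C_s\,|\delta v^{*,\rho,l,k-1}|_{H^m}\bigl(|v^{*,\rho,l,k-1}|_{H^m}+|v^{*,\rho,l,k-2}|_{H^m}\bigr)$. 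For the Leray part I would split the singular kernel $\partial_{x_i}K_n$ into a local piece (which lies in $L^1$ because $\partial_{x_i}K_n\sim|x|^{-(n-1)}=|x|^{-2}$ is integrable near $0$ in ${\mathbb R}^3$) and a tail piece (which lies in $L^2$), and apply Young's inequalities $L^1*L^2\to L^2$ and $L^2*L^1\to L^2$ respectively; this is precisely where $C_K$ enters, while $C_s$ again governs the spatial derivatives of the quadratic product. The upshot is the same shape of estimate with the extra constant $C_K$.

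Summing over $|\alpha|\le m$ and taking the supremum over $\tau$ gives an inequality $\max_i|\delta v^{*,\rho,l,k}_i|_{C^0((l-1,l),H^m)}\le\Theta\,\max_i|\delta v^{*,\rho,l,k-1}_i|_{C^0((l-1,l),H^m)}$ with prefactor
\[
\Theta\;\lesssim\;c(n)\,\rho_l\,C_G C_K C_s\,\max_j\bigl(|v^{*,\rho,l,k-1}_j|_{H^m}+|v^{*,\rho,l,k-2}_j|_{H^m}\bigr).
\]
Here I would run a simultaneous induction on $k$: assuming the contraction and a uniform a priori bound on the iterates up to $k-1$, the partial sums $v^{*,\rho,l,k-1}=v^{*,\rho,l-1}(l-1,\cdot)+\sum_{p\le k-1}\delta v^{*,\rho,l,p}$ satisfy $\max_j|v^{*,\rho,l,k-1}_j|_{H^m}\le\sqrt{C^{l-1}}+\tfrac12$ by the data hypothesis $|v^{*,\rho,l-1}_i(l-1,\cdot)|^2_{H^m}\le C^{l-1}$ of \eqref{initiall} and the geometric series $\sum_p 2^{-p}\cdot\tfrac14$. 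Substituting this coefficient bound and invoking the step-size condition $\rho_l\le\bigl(c(n)(C^{l-1}+1)C_GC_KC_s\bigr)^{-1}$ (which dominates the bound $\rho_l\lesssim(\sqrt{C^{l-1}}\,C_GC_KC_s)^{-1}$ actually needed, since $C^{l-1}+1\ge\sqrt{C^{l-1}}$) forces $\Theta\le\tfrac12$, which is \eqref{contractm}. The squared-norm version follows either by squaring this inequality ($\tfrac14\le\tfrac12$) or by carrying the identical estimate through for $|\cdot|^2$. For the base case $k=1$ the datum does \emph{not} cancel: $\delta v^{*,\rho,l,1}_i=\bigl(G_l*v^{*,\rho,l-1}(l-1,\cdot)-v^{*,\rho,l-1}(l-1,\cdot)\bigr)+\int_{l-1}^{\tau}G_l*S^0\,ds$, and both pieces are small for small $\rho_l$ (the source carries a factor $\rho_l$, and the Gaussian--identity difference vanishes as the effective diffusion time $\rho_l\nu$ shrinks, since $\partial_\sigma(G_l*f)=\rho_l\nu\Delta(G_l*f)$), so choosing $c(3)$ large enough secures the stated bound $\le\tfrac14$. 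Finally, the $C^p((l-1,l),H^m)$ norms for $p\ge1$ are handled by differentiating the scheme in $\tau$, using the equation itself to replace $D_\tau v^{*,\rho,l,k}_i$ by $\rho_l\nu\Delta v^{*,\rho,l,k}_i+S^{k-1}$ and feeding the spatial estimates in at one higher spatial order; this is the role of the ``related stronger inductive assumptions'' and costs only a correspondingly smaller step size.

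I expect the main obstacle to be the Leray-projection estimate: keeping the Calder\'{o}n--Zygmund-type kernel $\partial_{x_i}K_n$ under control by the elementary local/tail split and the weighted bilinear constant $C_s$, rather than by the adjoint-based a priori estimates of \cite{KB2}, is the whole point of the simplified scheme, and one must verify that this split closes the $H^m$ estimate with a clean constant that $\rho_l$ can absorb. A secondary bookkeeping difficulty is the simultaneous induction ensuring the coefficient norms $|v^{*,\rho,l,k-1}|_{H^m}$ remain $\lesssim\sqrt{C^{l-1}}$ throughout, so that the iteration map is genuinely a contraction and not merely bounded.
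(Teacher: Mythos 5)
Your architecture is essentially the paper's: the telescoping $ab-a'b'=(a-a')b+a'(b-b')$ that makes the source difference linear in $\delta v^{*,\rho,l,k-1}$, Young's inequality against the Gaussian (constant $C_G$), the local/tail splitting of $\partial_{x_i}K_n$ into an $L^1$ piece and an $L^2$ piece (constants $C_K$, $C_s$), and the simultaneous induction keeping $\max_j|v^{*,\rho,l,k-1}_j|_{H^m}\le\sqrt{C^{l-1}}+\tfrac12$ all appear in the paper's proof. The genuine gap is at the top derivative order. Your key displayed identity moves all of $D^\alpha_x$, $|\alpha|\le m$, onto the source and then applies Young with $|G_l(\sigma,\cdot)|_{L^1}$; for $|\alpha|=m$ this requires a bound on $|\delta S^{k-1}(s,\cdot)|_{H^m}$. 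But $\delta S^{k-1}$ is built from $\nabla v^{*,\rho,l,k-1}$, $\nabla v^{*,\rho,l,k-2}$ and $\nabla\delta v^{*,\rho,l,k-1}$, which your induction controls only in $H^{m-1}$; the product estimates give $|\delta S^{k-1}|_{H^{m-1}}\lesssim\rho_l C_KC_s|\delta v^{*,\rho,l,k-1}|_{H^m}\bigl(|v^{*,\rho,l,k-1}|_{H^m}+|v^{*,\rho,l,k-2}|_{H^m}\bigr)$, one order short of what your formula needs, and an $H^m$ bound on $\delta S^{k-1}$ would require $(m+1)$-st derivatives of the iterates, which are not available. As written, the estimate loses a derivative and does not close. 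This is precisely what the paper's proof is organized to avoid: in the representations (\ref{vrepH1}), (\ref{vrepH2second}) and (\ref{vrepalpha}) one spatial derivative is always shifted onto the heat kernel $G_{l,j}$ and never more than $m-1$ onto the source, the point of the refined Gaussian estimate being that $G_{l,j}$ is still integrable in space-time, $\int_{l-1}^{l}|G_{l,j}(\sigma,\cdot)|_{L^1({\mathbb R}^3)}\,d\sigma\lesssim(\rho_l\nu)^{-1/2}$. Your proof is repaired by exactly this one modification — write $\alpha=\beta+e_j$, put $D^\beta_x$ on $\delta S^{k-1}$ and $\partial_j$ on $G_l$ — at the harmless price that the contraction prefactor becomes $\sim\sqrt{\rho_l/\nu}$ times your constants rather than $\sim\rho_l$.

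A secondary soft spot is the base case $k=1$. You argue that $|G_l*v^{*,\rho,l-1}(l-1,\cdot)-v^{*,\rho,l-1}(l-1,\cdot)|_{H^m}$ is small because the effective diffusion time $\rho_l\nu$ is small; but this convergence is not uniform over the ball $\{|f|_{H^m}\le\sqrt{C^{l-1}}\}$ (concentrate the Fourier mass of $f$ at frequencies $|\xi|\gg(\rho_l\nu)^{-1/2}$), so no choice of $c(3)$ depending only on $C^{l-1}$ and the universal constants follows from this argument alone. The paper faces the same difficulty and patches it with the additional structure it maintains inductively, namely spatial polynomial decay of $D^\alpha_xv^{*,\rho,l-1}_i(l-1,\cdot)$ up to order $m$ (cf. the discussion around (\ref{firstterm})--(\ref{firsttermeq2})); alternatively one can assume control of two more derivatives, since $|G_l(\sigma,\cdot)*f-f|_{H^m}\le\rho_l\nu\,|f|_{H^{m+2}}$ for $\sigma\in[0,1]$. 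You should import one of these hypotheses for the $k=1$ claim; with that and the top-order fix above, the rest of your argument (squared-norm version, step-size bookkeeping via $C^{l-1}+1\ge\sqrt{C^{l-1}}$, and the $C^p$ extension through the equation) goes through as in the paper.
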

Note that at each approximation step for $m=1$ we have indeed $v^{*,\rho,l,k}_i\in C\left([l-1,l], C^2\left({\mathbb R}^n\right)\cap H^2 \right)$. 

\section{Results of global regular upper bounds and global existence results for the uncontrolled Navier Stokes equation }

Next we state some implications concerning global regular upper bounds and global regular existence of the the uncontrolled incompressible Navier Stokes equation. From the point of view of controlled schemes developed in this paper the local time contraction results described in the preceding section are essential. They have not only time-local regular existence as a consequence, but together with controlled schemes they lead to global regular upper bounds and to global regular existence straightforwardly. However, let us first comment on some relations of schemes with external control functions to auto-controlled schemes considered in \cite{KAC} recently. Let us reconsider the idea of auto-controlled schemes in the context of the uncontrolled schemes $v^{\rho,l}_i$ and $v^{*,\rho,l}_i$ for $1\leq i\leq n$ and $l\geq 1$ introduced above. If uncontrolled regular data $v^{*,\rho,l-1}_i(l-1,.)=v^{\rho,l-1}_i(l-1,.)\in H^m\cap C^m$ for $m\geq 2$ are given, then the functions are local solution functions of the local incompressible Navier Stokes equation, and, according to the local contraction results of the preceding section, the functions $v^{\rho,l}_i$ and $v^{*,\rho,l}_i$ for $1\leq i\leq n$ are identical limits of local iteration schemes, i.e.,
\begin{equation}
\begin{array}{ll}
v^{*,\rho,l}_i=v^{*,\rho,l-1}_i(l-1,.)+\sum_{k\geq 1}\delta v^{*,\rho,l,k}_i\\
\\
=v^{\rho,l}_i=v^{\rho,l-1}_i(l-1,.)+\sum_{k\geq 1}\delta v^{\rho,l,k}_i,
\end{array}
\end{equation}
where we recall that $\delta v^{*,\rho,l,k}_i=v^{*,\rho,l,k}_i-v^{*,\rho,l,k-1}$ for $k\geq 1$ along with  $v^{*,\rho,l,0}_i:=v^{*,\rho,l-1}_i(l-1,.)=v^{\rho,l}_i$. As the scheme with upper script $*$ and the scheme without upper script $*$ (where a local contraction argument is described in \cite{KB2}) lead to the same limit function in $C^1\left(\left[l-1,l\right],H^m\cap C^m\right)$ for $m\geq 2$  we may drop the upper script $*$ when we argue about global schemes in terms of these local limit functions as we do in this section.  The idea of auto-controlled schemes considered in \cite{KAC} applied to these local schemes means that at each time step $l\geq 1$ we compare on the domain $[l-1,l)\times {\mathbb R^n}$ the functions $v^{\rho,l}_i(\tau,.)$ with the functions $u^{\rho,l}_i(s,.)$, where
\begin{equation}\label{timedil}
v^{\rho,l}_i=(1+(\tau-(l-1)))u^{\rho,l}_i\left(s,x\right),
\end{equation}
and where with $\tau_l=\tau-(l-1)$ we have
\begin{equation}
s=\frac{\tau_l}{\sqrt{1-\tau^2_l}}\in [0,\infty)
\end{equation}
This time dilatation transformation leads at each time step to an equation for the functions $u^{\rho,l}_i$ on the domain $[0,\infty)\times {\mathbb R^n}$ with damping potential terms (cf \cite{KAC}). Due to the damping terms it is not difficult to show that the functions preserve upper bounds in the sense that for each $m\geq 2$ we have a step size $\rho>0$ such that for all $l\geq 1$ and all $1\leq i\leq n$ we have
\begin{equation}
{\big |}u^{\rho,l-1}_i(l-1,.){\big |}_{H^m\cap C^m}\leq C\Rightarrow {\big |}u^{\rho,l}_i(l,.){\big |}_{H^m\cap C^m}\leq C.
\end{equation}
This is leads to upper bounds for $v^{\rho,l}_i$, or, equivalently, for $v^{*,\rho,l}_i$ immediately (at least of exponential type with respect to time with global regular upper bounds of the form $C2^l$), but closer analysis shows that global regular upper bounds which are linear in time or even independent of time can be obtained by such schemes. One possibility is to observe (\ref{timedil}) on a finer time scale. For example the alternative comparison with a refined scheme with respect to time, where
\begin{equation}\label{timedil}
v^{\rho,l}_i=\left( \frac{1}{2}+(\tau-(l-1))\right) w^{\rho,l}_i\left(s,x\right),
\end{equation}
is compared on $\tau_l=\tau-(l-1)\in \left[l-1,l-\frac{1}{2}\right]\times {\mathbb R}^n$ with
\begin{equation}
s=\frac{\tau_l}{\sqrt{1-\tau^2_l}}\in \left[ 0,\frac{1}{\sqrt{3}}\right],
\end{equation}
leads to the conclusion that we can global regular upper bounds for $w^{\rho,l}_i$ are preserved by the velocity components $v^{\rho,l}_i$. These observations (we shall make this fully explicit in an elaborated version of  \cite{KAC}) lead to several conclusions:
\begin{itemize}
 \item[i)] The local contraction results of section 4 are essential in the sense that global regular existence results can be obtained from them by an argument which relies only on a time dilatation transformation argument and the observation that similar contraction results can be obtained for $u^{\rho,l}_i$ as for $v^{\rho,l}_i$.
 \item[ii)] Global regular existence for the incompressible Navier Stokes equation can be obtained from global regular upper bounds in the function space $C^1\left(\left[0,T \right],H^m\cap C^m  \right)$ for $m\geq 2$, where the time dependence of the upper bounds is in general linear for simple schemes and the upper bounds are independent of time for (a bit) more sophisticated schemes. These results can be obtained by auto-controlled schemes and by schemes with an external control function.
 \item[iii)] If we consider auto-controlled schemes on a finer time scale of time step length $\leq \frac{1}{2}$, then auto-controlled schemes may be also interesting from a numerical point of view. The time consumption by time dilatation then limited, although we have no such dilatation for schemes with external control functions. 
 \item[iv)] Auto-controlled schemes have not the peculiarity of some of the schemes with external control functions that they are only Lipschitz with respect to time at integer time points $\tau=l$. Note, however, that for more sophisticed external control functions we can avoid this.  
\end{itemize}
Since the local contraction results are such essential that the argument for global regular existence of solutions of the incompressible Navier Stokes equation can be decoupled for a large extent, we may ask why we stick  to external control functions at all. There are several reasons on an analytical level:
\begin{itemize}
 \item[a)] The auto-controlled scheme is too weak in order to obtain global existence results for certain generalised models with degeneracies. Some of the results of this paper can be preserved for highly degenerate Navier Stokes equation where the uncoupled terms satisfy a H\"{o}rmander condition. 
 \item[b)] Auto-controlled schemes do not preserve spatial polynomial decay of a certain order. Well, it seems that this is not expected by most specialists. The reason seems to be that even in the simple classical Navier Stokes equation model scheme where we have representations of locally converging approximations in terms of convolutions with Gaussians, we loose some order of polynomial decay in general. However if we include the negative of the first approximation $-\delta v^{*,r^{l-1},\rho,l}_i$ as a summand into the control function at time step $l$, then we have some polynomial decay result for the controlled scheme, and the degree of polynomial decay which can be obtained for the uncontrolled Navier Stokes equation then depends on the degree of polynomial decay which we get for the control function (and in general we get some).
 \item[c)] We discovered global existence results first by using external control functions, and obtain a prove of global existence which is independent of the proof of global existence via auto-controlled schemes.
 \item[d)] The concept of an external control function is a flexible concept, and it can be combined with auto-control functions in order to obtain global existence results which cannot be obtained by auto-controlled schemes alone.
 \end{itemize}
We close this section stating the main theorems for the incompressible Navier Stokes equation. 

\begin{thm}\label{uncontrolledthm1} Assume that $\mathbf{h}=\left(h_1,\cdots,h_n \right)^T$ is a function with $h_i\in C^m\cap H^m$ for $m\geq 2$, and assume that $\nu>0$ for the viscosity constant $\nu$.
Then there are a global regular upper bounds for the velocity solution function components $v_i$ of the Cauchy problem
\begin{equation}\label{Navlerayorg}
\left\lbrace \begin{array}{ll}
\frac{\partial v_i}{\partial t}-\nu\sum_{j=1}^n \frac{\partial^2 v_i}{\partial x_j^2} 
+\sum_{j=1}^n v_j\frac{\partial v_i}{\partial x_j}=\\
\\ \hspace{1cm}\sum_{j,m=1}^n\int_{{\mathbb R}^n}\left( \frac{\partial}{\partial x_i}K_n(x-y)\right) \sum_{j,m=1}^n\left( \frac{\partial v_m}{\partial x_j}\frac{\partial v_j}{\partial x_m}\right) (t,y)dy,\\
\\
\mathbf{v}(0,.)=\mathbf{h}(0,.),
\end{array}\right.
\end{equation}
for arbitrary time horizon $T>0$ in the function space $C^1\left(\left[0,T\right],H^m\cap C^m  \right)$. More precisely, there exists a constant $C>0$ which depends only on the initial data and the viscosity constant $\nu>0$ such that the upper bound
\begin{equation}
\sup_{t\in [0,T]}{\big |}v_i(t,.){\big |}_{H^m\cap C^m}\leq C+CT
\end{equation}
and the stronger upper bound
\begin{equation}
\sup_{t\in [0,T]}{\big |}v_i(t,.){\big |}_{H^m\cap C^m}+\sup_{t\in [0,T]}{\big |}D_tv_i(t,.){\big |}_{H^m\cap C^m}\leq C+CT
\end{equation}
exists for the component $v_i$ of the solution $\mathbf{v}=(v_1,\cdots,v_n)^T$ in the function spaces $C^0\left(\left[0,T\right],H^m\cap C^m  \right)$ and $C^1\left(\left[0,T\right],H^m\cap C^m  \right)$ respectively. As in implication we have global smooth existence for the Cauchy problem for $\nu >0$ and Schartz data, which is one version of the problem formulated in the notes of C. Fefferman in \cite{Feff}.
\end{thm}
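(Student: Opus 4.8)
The plan is to realize the solution $v_i$ of (\ref{Navlerayorg}) as the uncontrolled summand $v_i=v^r_i-r_i$ of a globally defined controlled scheme, built time-step by time-step, and to extract the bound $C+CT$ from a \emph{uniform} bound on the controlled velocity together with a \emph{linear} bound on the control function. Concretely, I would work in transformed time coordinates on the domains $[l-1,l]\times\mathbb{R}^n$, use the simplified control increments (\ref{simpleincrement1}) (equivalently the source term $\phi^l_i=-v^{r,\rho,l-1}_i(l-1,\cdot)/C$ of Remark \ref{remarksimplecontrol}), and solve the controlled system (\ref{Navleraycontrolledlint}) at each step through the local iteration whose convergence in $C^1\left((l-1,l),H^m\right)$ is furnished by Theorem \ref{mainthm2}. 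Since $v^{r,\rho,l}_i=v^{r^{l-1},\rho,l}_i+\delta r^l_i$, the local contraction result is exactly what controls the increment $v^{r^{l-1},\rho,l}_i(l,\cdot)-v^{r,\rho,l-1}_i(l-1,\cdot)$ produced over a single step by the incompressible Navier--Stokes flow with controlled data.

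The core of the argument is the inductive claim (\ref{controlled1}): there is a constant $C>0$, depending only on $\mathbf{h}$ and $\nu$, with $|v^{r,\rho,l}_i(l,\cdot)|_{H^m\cap C^m}\le C$ for every $l\ge 1$ and $1\le i\le n$. I would prove this by induction on $l$, balancing the growth generated by the convection and Leray projection terms against the consumption built into the control source term. Here the squared-$L^2$ formulation motivated just before Theorem \ref{mainthm} is decisive: estimating the Leray term pointwise by $\big|\partial_{x_i}K_n\big|$ convolved with a sum of squares of first-order spatial derivatives of $v^{r,\rho,l}$, and using the weighted-product constant $C_s$ (valid since $m\ge 2>n/2$) together with the kernel constant $C_K$, one sees that the per-step increment of the squared $H^m$ norm is controlled by $\rho_l$ times a bounded quantity, while the control increment $\delta r^l_i$ contributes a restoring term of the form $-v^{r,\rho,l-1}_i/C$ that absorbs precisely this growth. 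Because the inductive hypothesis keeps the \emph{data} of each local step uniformly bounded, the step-size threshold of Theorem \ref{mainthm2} is bounded below by a positive constant independent of $l$; hence a \emph{constant} step size $\rho>0$ may be fixed once and for all, and reaching the horizon $T$ takes $l=\lceil T/\rho\rceil$ steps.

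With the uniform bound in hand, the estimate (\ref{controlupperbound1}) for the control function follows from summing the increment bounds: each $\delta r^l_i$ is the heat-smoothed, time-integrated image of $-v^{r,\rho,l-1}_i(l-1,\cdot)/C$, so $|r^l_i(l,\cdot)|_{H^m\cap C^m}\le \tilde C(1+l)$ with $\tilde C$ independent of $l$. Consequently the uncontrolled velocity $v^{\rho,l}_i=v^{r,\rho,l}_i-r^l_i$ obeys the linear endpoint bound (\ref{uncontrolled1}), and a second appeal to the local contraction result upgrades this to a bound for $\sup_{\tau\in[l-1,l]}|v^{\rho,l}_i(\tau,\cdot)|_{H^m\cap C^m}$. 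Transforming back to the original time variable via $t=\rho\tau$ with $\rho$ constant and using $l=\lceil T/\rho\rceil$ turns the linear-in-$l$ bound into $\sup_{t\in[0,T]}|v_i(t,\cdot)|_{H^m\cap C^m}\le C+CT$. The $C^1$-in-time bound is then read off the equation (\ref{Navlerayorg}) itself: $D_t v_i$ equals the Laplacian, convection and Leray terms, each of which is estimated in $H^m\cap C^m$ (again for $m\ge 2$, which supplies the derivatives needed for the products and for the kernel estimate through $C_K$) by the spatial bound just obtained. Finally, for Schwartz data one has $h_i\in H^m\cap C^m$ for \emph{every} $m$, so the above yields finite bounds in all $H^m\cap C^m$ and hence global smoothness, which is the version of the Fefferman problem \cite{Feff} asserted.

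I expect the main obstacle to be the inductive persistence of the uniform bound (\ref{controlled1}): one must verify, uniformly in $l$, that the control source term consumes the full nonlinear and Leray growth over each step, and that this consumption is compatible with keeping the step size constant (so that $C^{l-1}$ in Theorem \ref{mainthm2} does not degrade into a shrinking threshold). This is precisely where the squared-norm local contraction estimates and a careful sign/consumption bookkeeping for the control increments are indispensable, and it is what makes the local contraction results \emph{essential} rather than merely convenient for the global conclusion.
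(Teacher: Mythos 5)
Your proposal is correct and follows essentially the same route as the paper's own proof in Section 9: a time-stepped controlled scheme with the simple increment (\ref{simpleincrement1}), an inductive uniform bound (\ref{controlled1}) on the controlled velocity obtained by playing the restoring term $-v^{r,\rho,l-1}_i/C$ against the contraction-controlled step increments at constant step size $\rho\sim 1/C^3$, then the linear bound (\ref{controlupperbound1}) on $r^l_i$ by summing increments, subtraction to get $v_i=v^r_i-r_i$ with the $C+CT$ bound, and the time-derivative and Schwartz-data conclusions read off at the end. The only cosmetic difference is your emphasis on the squared-norm/Leray bookkeeping in the inductive step, where the paper argues directly with the $H^m\cap C^m$ norm of the data dominating (via the factor $1-1/C$) the step increments; both implement the same consumption mechanism.
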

This theorem can be proved with the simple control function in (\ref{simpleincrement1}). Note that this theorem follows from a quite straightforward argument using this simple control scheme and the time-local contraction results stated in the preceding section. As we remarked the time dependence of the upper bound can be improved to be logarithmic if we use a time step size $\rho_l\sim\frac{1}{l}$. For the alternative simple scheme which uses the negative first increment $-\delta v^{r^{l-1},*,\rho,l,1}_i$ of the local scheme for the uncontrolled Navier Stokes equation with controlled data, we need the form of decreasing time step size. This simple scheme leads to a linear upper bound of the Leray projection term. We have
\begin{thm}\label{uncontrolledthm2}
Assume that $\mathbf{h}=\left(h_1,\cdots,h_n \right)^T$ is a function with $h_i\in C^m\cap H^m$ for $m\geq 2$, and assume that $\nu>0$ for the viscosity constant $\nu$. Then for the velocity solution function $v_i$ of (\ref{Navlerayorg}) we have
\begin{equation}
 \begin{array}{ll}
{\Big |}\sum_{j,m=1}^n\int_{{\mathbb R}^n}\left( \frac{\partial}{\partial x_i}K_n(x-y)\right) \sum_{j,m=1}^n\left( \frac{\partial v_m}{\partial x_j}\frac{\partial v_j}{\partial x_m}\right) (t,y)dy{\Big |}\leq C+CT
\end{array}
\end{equation}
for some constant $C>0$ which depends only on the initial data $\mathbf{h}$ and on the viscosity constant $\nu >0$.
\end{thm}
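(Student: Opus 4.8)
The plan is to reduce the bound on the Leray projection term to a bound on the squared $H^2$-norm of the velocity, and then to extract linear-in-time growth of that squared norm from the controlled scheme together with the local contraction result of Theorem~\ref{mainthm}. For the first reduction I would use exactly the pointwise estimate displayed just before Theorem~\ref{mainthm}: writing $L_i(v)(t,x)$ for the Leray term in (\ref{Navlerayorg}), the product $\frac{\partial v_m}{\partial x_j}\frac{\partial v_j}{\partial x_m}$ is dominated by $\frac12\big((\partial_{x_j}v_m)^2+(\partial_{x_m}v_j)^2\big)$, so that $L_i(v)$ is bounded by the convolution of $|\partial_{x_i}K_n|$ with a sum of squared first derivatives. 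Splitting $\partial_{x_i}K_n$ into its contribution on $B_1(0)$ (handled by Young's inequality in $L^1$) and on the complement (handled by Cauchy--Schwarz in $L^2$) yields, with the constant $C_K$ already fixed above,
\begin{equation}
|L_i(v)(t,.)|\leq C_K\,|v(t,.)|^2_{H^2}.
\end{equation}
Hence a linear bound for $|v(t,.)|^2_{H^2}$ immediately produces the claimed linear bound for the Leray term.

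For the decisive step I would run the controlled scheme with the increment $\delta r^l_i=-\delta v^{r^{l-1},*,\rho,l,1}_i$ and prove inductively the linear growth of the squared norm, i.e.\ (\ref{initialsizel}) and thence (\ref{initialsizelor}). The purpose of this particular increment is that it exactly removes the first substep correction, so that
\begin{equation}
v^{r,*,\rho,l}_i(l,.)=v^{r,*,\rho,l-1}_i(l-1,.)+\sum_{k\geq 2}\delta v^{r^{l-1},*,\rho,l,k}_i(l,.),
\end{equation}
and the per-step change of the controlled velocity is carried entirely by the higher-order increments $k\geq 2$. Applying the refined higher-order contraction in $C^0\big((l-1,l),H^2\big)$ with factor $1/\sqrt{l}$ noted in Section~2, these increments sum to an $H^2$-quantity of size $\lesssim 1/\sqrt{l}$. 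Writing $a_l:=|v^{r,*,\rho,l}_i(l,.)|_{H^2}$ this gives $a_l\leq a_{l-1}+D/\sqrt{l}$, whence $a_l\lesssim\sqrt{l}$ and therefore $a_l^2-a_{l-1}^2=(a_l-a_{l-1})(a_l+a_{l-1})\lesssim(1/\sqrt{l})\sqrt{l}=O(1)$, so the squared norm increases by only a bounded amount per time step. Since the control function grows at most linearly ($r^l_i\sim l$) and the local contraction uniqueness identifies the controlled and uncontrolled limits, the same linear growth (\ref{initialsizelor}) transfers to the uncontrolled velocity, and the contraction result upgrades the endpoint bound to $\sup_{\tau\in[l-1,l]}|v^{*,\rho,l}_i(\tau,.)|^2_{H^2}\leq C+lC$. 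Passing from the step number $l$ to the physical time $T$ and combining the two displays then gives, for $t\in[0,T]$ in time step $l$, the bound $|L_i(v)(t,.)|\leq C_K|v(t,.)|^2_{H^2}\leq C_K(C+lC)\leq C'+C'T$, which is the assertion.

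The hard part will be the interplay in the second step between the $1/\sqrt{l}$-contraction, the admissible step size, and the physical-time relabelling. The contraction of Theorem~\ref{mainthm} forces $\rho_l\lesssim 1/(C^{l-1}+1)$, so once $C^{l-1}\sim l$ the step size must decrease like $1/l$, and one must check carefully that the accumulated physical time $T_l=\sum_{m\leq l}\rho_m$ together with the step-number-linear bound $C+lC$ still yields a genuinely linear dependence on $T$ rather than a faster one; this is precisely the role of the decreasing time step size flagged before the theorem. Establishing the refined factor $1/\sqrt{l}$ for the higher-order increments (in place of the plain factor $\frac12$ of Theorem~\ref{mainthm}), and verifying that subtracting $\delta v^{r^{l-1},*,\rho,l,1}_i$ keeps the control function in the admissible Lipschitz class while leaving $v_i=v^{r}_i-r_i$ a bona fide solution of (\ref{Navlerayorg}), are the technical crux; the remainder is the routine Young/Cauchy--Schwarz bookkeeping already absorbed into the constants $C_G$, $C_K$, $C_s$.
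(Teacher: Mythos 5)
Your proposal is essentially the paper's own argument: the pointwise $|\partial_{x_i}K_n|$-estimate reducing the Leray term to squared Sobolev norms is the one displayed before Theorem \ref{mainthm}, and the control increment $\delta r^l_i=-\delta v^{r^{l-1},*,\rho,l,1}_i$ with decreasing step size $\rho_l\sim 1/l$, the refined contraction factor $\sim 1/\sqrt{l}$ for the increments with $k\geq 2$, and the telescoping of squared norms $\bigl(\sqrt{C+(l-1)C}+\tfrac{1}{C\sqrt{l}}\bigr)^2\leq C+lC$ are exactly the steps of the ``alternative argument'' given in the section proving Theorems \ref{uncontrolledthm1} and \ref{uncontrolledthm2} and elaborated in the section following it. The two points you flag as the technical crux --- passing from the controlled bound (\ref{initialsizel}) together with $r^l_i\sim l$ to the uncontrolled bound (\ref{initialsizelor}), and converting a bound linear in the step number $l$ into one linear in physical time $T$ when $\rho_l\sim 1/l$ makes the accumulated time $\sum_{m\leq l}\rho_m$ only logarithmic in $l$ --- are precisely the two steps the paper asserts without further justification, so your reconstruction matches the paper's proof including the places where it is thinnest.
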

We note that the result of (\ref{uncontrolledthm2}) can also be obtained with the methods of the simple controlled scheme which we use in order to get (\ref{uncntrolledthm1})  if we use the variation of the scheme with decreasing time step size which leads to a regular upper bound with logarithmic growth in time. Well, both theorems may be sharpened using the more sophiscat ed controlled scheme (as we do in this paper), or an auto-controlled scheme (as we do in an elaboration of \cite{KAC}. We have
\begin{thm}\label{uncontrolledthm3} Assume that $\mathbf{h}=\left(h_1,\cdots,h_n \right)^T$ is a function with $h_i\in C^m\cap H^m$ for $m\geq 2$, and assume that $\nu>0$ for the viscosity constant $\nu$.
Then there are a global regular upper bounds for the velocity solution function components $v_i$ of the Cauchy problem in (\ref{Navlerayorg})
for arbitrary time horizon $T>0$ in the function space $C^1\left(\left[0,T\right],H^m\cap C^m  \right)$ of the form
\begin{equation}
\sup_{t\in [0,T]}{\big |}v_i(t,.){\big |}_{H^m\cap C^m}\leq C
\end{equation}
and even of the form
\begin{equation}
\sup_{t\in [0,T]}{\big |}v_i(t,.){\big |}_{H^m\cap C^m}+\sup_{t\in [0,T]}{\big |}D_tv_i(t,.){\big |}_{H^m\cap C^m}\leq C
\end{equation}
for some constant $C>0$ which is independent of the time.
\end{thm}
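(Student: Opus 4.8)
The plan is to upgrade the simple control scheme that underlies Theorems \ref{uncontrolledthm1} and \ref{uncontrolledthm2} — which only delivers the linear bound $C+CT$ — to the sophisticated ``scheme with small foresight'' built on the partition (\ref{partition1})--(\ref{partition2}) and the function $g^l_i$ (equivalently the consumption source $\phi^l_i$ of (\ref{philli})), in place of the simple increment (\ref{simpleincrement1}). The whole theorem reduces to an induction on the time step number $l$ that maintains two claims \emph{simultaneously}: (A) a time-independent bound $|v^{r,\rho,l}_i(l,.)|_{H^m\cap C^m}\le C$ for the controlled velocity, and, crucially, (B) a time-independent bound $|r^l_i(l,.)|_{H^m\cap C^m}\le\tilde C$ for the control function itself. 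Claim (A) is already recorded in (\ref{controlled1}); the genuinely new content relative to Theorem \ref{uncontrolledthm1} is (B), since in the simple scheme $r^l_i\sim l$ and this linear growth of the control is exactly what forces the linear-in-$T$ bound. Once (A) and (B) hold uniformly in $l$, the uncontrolled velocity $v^{\rho,l}_i=v^{r,\rho,l}_i-r^l_i$ is bounded by $C+\tilde C$ independently of $l$, which is the assertion.

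For the inductive step, given data $v^{r,\rho,l-1}_i(l-1,.)$ and $r^{l-1}_i(l-1,.)$ satisfying (A) and (B) at level $l-1$, I would first solve the \emph{uncontrolled} local problem (\ref{Navleray*?}) with controlled data to produce $v^{r^{l-1},\rho,l}_i$ on $[l-1,l]$. Since the controlled data is bounded by the constant $C$ of claim (A), the local contraction Theorem \ref{mainthm2} applies with a step size $\rho$ that may be taken constant in $l$, namely $\rho\le(c(n)(C+1)C_GC_KC_s)^{-1}$; the iteration converges geometrically and the within-step variation is bounded uniformly in $l$, so the supremum of $|\cdot|_{H^m\cap C^m}$ over $[l-1,l]$ exceeds the endpoint value $|v^{r^{l-1},\rho,l}_i(l,.)|_{H^m\cap C^m}$ by at most a constant. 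Next I would form the four sets of (\ref{partition1}) from the signs of $v^{r^{l-1},\rho,l}_i(l,.)$ and $r^{l-1}_i(l-1,.)$, define $g^l_i$ and $\delta r^l_i$ by (\ref{onequalsign}) and (\ref{ondifferentsign}), and set $v^{r,\rho,l}_i=v^{r^{l-1},\rho,l}_i+\delta r^l_i$. The construction is engineered so that on the equal-sign region the combined size does not increase, $|v^{r,\rho,l}_i(l,x)+r^l_i(l,x)|\le|v^{r,\rho,l-1}_i(l-1,x)+r^l_i(l-1,x)|$ once $\rho$ is small, which propagates (A), while on the different-sign region the factor $2$ in $g^l_i$ overshoots so that the signs of $v$ and $r$ become aligned after this step. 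Carrying the $H^m$ and $C^m$ parts of the norm together uses the polynomial-decay preservation of the higher order increments established in section 6, so that the pointwise $C^m$ bound is governed by the same consumption mechanism as the $H^m$ bound.

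The main obstacle is claim (B): showing that the control function does \emph{not} accumulate in $l$, in contrast to the simple increment (\ref{simpleincrement1}). This is where the sign bookkeeping of the foresight scheme must do real work. I would argue that after at most one corrective step the control and the controlled velocity share a sign pointwise, and that once aligned the consumption delivered through $g^l_i$ is matched to the growth produced by the convection and Leray terms on $[l-1,l]$, so that the pointwise bound $|r^l_i(l-1,x)|\le C^2+1$ used in the endpoint estimate is reproduced at time $l$ rather than increased. The delicate point is that the smoothing by the heat kernel $p_l$ in (\ref{onequalsign})--(\ref{ondifferentsign}), which is needed to keep $r^l_i$ differentiable across the sign boundaries of the partition, can transport mass across those boundaries and thereby blur the sign alignment; one must show that for step size $\rho$ small on the scale of $C$ (the regime $\rho\sim C^{-k}$, $k\ge 3$, discussed after (\ref{onequalsign*})) this boundary effect is a lower-order perturbation that does not destroy non-accumulation. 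Establishing this uniform-in-$l$ balance, together with the absolute bound $|r^l_i|\le C^2+1$, is the crux; everything else is a bounded consumption estimate.

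Finally I would assemble the global statement. The uniform discrete bounds (A) and (B) give $\sup_l|v^{\rho,l}_i(l,.)|_{H^m\cap C^m}\le C+\tilde C$, and the local contraction result upgrades this endpoint control to $\sup_{\tau\in[l-1,l]}|v^{\rho,l}_i(\tau,.)|_{H^m\cap C^m}\le C'$ with $C'$ independent of $l$, hence $\sup_{t\in[0,T]}|v_i(t,.)|_{H^m\cap C^m}\le C'$ for every $T>0$. The $C^1$-in-time reinforcement then follows by reading off $D_tv_i$ from the equation (\ref{Navlerayorg}): the time derivative is the sum of the viscous term, the convection term, and the Leray projection term, each already bounded in $H^m\cap C^m$ (the Leray term uniformly by the uniform-regime analogue of Theorem \ref{uncontrolledthm2}), so $\sup_t|D_tv_i(t,.)|_{H^m\cap C^m}$ inherits the same time-independent bound. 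Since $m\ge2$ is arbitrary and Schwartz data are admissible, this also yields the time-independent strengthening of the global smooth existence statement.
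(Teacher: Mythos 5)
Your proposal follows essentially the same route as the paper: the paper proves Theorem \ref{uncontrolledthm3} via the scheme with small foresight (sections 5, 7 and 10), maintaining by induction exactly your claims (A) and (B) — a uniform bound $C$ for the controlled velocity functions and a bound of order $C^2+1$ for the control functions — through the same pointwise sign bookkeeping with $g^l_i$, the smoothed increments (\ref{onequalsign})--(\ref{ondifferentsign}), and a step size $\rho_l$ small on the scale of $C$, before concluding for $v_i=v^r_i-r_i$ via the local contraction results. The crux you flag (non-accumulation of the control function, and the heat-kernel smoothing possibly blurring the sign alignment across partition boundaries) is precisely the point the paper's own finite case analysis addresses, at a comparable level of rigor.
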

In addition to the statements of this section there may be additional consequences concerning polynomial decay of the solution. In the next section we argue that (at least for some type of control  function) polynomial decay order is preserved for the controlled scheme. This in implies that the degree of polynomial decay for the uncontrolled velocity function components is bounded by the polynomial decay of the control function and the polynomial decay of the control function. For the controlled scheme the persistence of polynomial decay is quite natural if the increment of control function has the negative of the first order approximation increment as a summand at each time step. The convolution with the Gaussian then leads to a loss of order of the polynomial decay of the control function. Note, however, that at each time step $l$ the control function increments of the simple schemes are determined in terms of the data of the controlled velocity function increments evaluated at time $l-1$, and for this controlled velocity function we have persistence of polynomial decay - at least for the controlled schemes with the negative first order velocity increment. So some degree of polynomial decay may be preserved, but we have not estimated this exactly so far.

\section{Inheritance of polynomial decay for the higher order correction terms in the local scheme  } 
Although the emphasis of this paper is on local contraction in the function spaces $C^0\left([0,T],H\cap C^m \right)$ and $C^0\left([0,T],H\cap C^m \right)$, and the global upper bounds we get for the velocity function components in these spaces, we consider the question of polynomial decay for controlled schemes in more detail in order to give some more substance to the last remarks of the preceding section. This section is not essential for the local contraction results and the global existence results stated in the previous sections and may be skipped by the reader who is interested in that kind of results in the first place.
The considerations at the end of section 3 show us that it is essential to observe some properties for the local scheme without control function. The transfer to a controlled scheme is straightforward. Therefore we consider the inheritance of polynomial decay for the local uncontrolled scheme.   
Given $v^{*,\rho,l-1}_i(l-1,.)$ at time step $l-1$ the solution function $v^{*,\rho,l}_i,1\leq i\leq n$ is constructed via the functional series
\begin{equation}
v^{*,\rho,l}_i=v^{*,\rho,l-1}_i(l-1,.)+\delta v^{*,\rho,l,1}_i+\sum_{k\geq 2}\delta v^{*,\rho,l,k}_i.
\end{equation}
We call the terms of the last sum, i.e., the terms $v^{*,\rho,l,k}_i,k\geq 2$ the higher order correction terms. If we consider standard estimates of Gaussians around the origin, then we may expect that some order of polynomially decay may be lost by $\delta v^{*,\rho,l,1}_i$ while representations of the higher order terms contain convolutions involving products of functions and may preserve polynomial decay of a certain order. In this section we prove this. However, an even stronger result is possible: although we do not need this, the following simple estimates are very remarkable, and they have consequences for the preservation of polynomial decay of the local scheme considered here. This can simplify some steps of the local contraction proof, and it makes precise some remarks of \cite{KNS}. Therefore, let us consider this in more detail. 
Let us assume that the function $v^{*,\rho,l-1}_i(l-1,.),~1\leq i\leq n$ have been determined at the previous time step with
\begin{equation}\label{inductivehyplminus1preface}
\max_{1\leq i\leq n}\sum_{|\alpha|\leq 2}\sup_{y\in {\mathbb R}^3}\left(1+|y|^2\right) {\Big |}D^x_{\alpha}v^{*,\rho,l-1}_i(l-1,y){\Big |}\leq C
\end{equation}
for some constant $C>0$. In the local scheme we then construct the solution for the first approximation for small time-step size as a perturbation of $v^{*,\rho,l,1}_i=v^{*,\rho,l-1}_i(l-1,.)+\delta v^{*,\rho,l,1}_i$ which has the representation (for $\tau\in [l-1,l]$)
\begin{equation}\label{solrep1111}
 \begin{array}{ll}
 v^{*,\rho,l,1}_i(\tau,x)=\int_{{\mathbb R}^3}v^{*,\rho,l-1}_i(l-1,y)G_l(\tau-(l-1),x-y)dy\\
 \\
 +\int_{l-1}^{\tau}\int_{{\mathbb R}^n}S^1(s,y)G_l(\tau-s,x-y)dyds,
 \end{array}
 \end{equation}
where
\begin{equation}
\begin{array}{ll}
S^1(\tau,y):=-\rho_l\sum_{j=1}^n v^{*,\rho,l-1}_j(\tau,y)\frac{\partial v^{*,\rho,l-1}_i}{\partial x_j}(l-1,y)+\\
\\
\rho_l\sum_{j,m=1}^n\int_{{\mathbb R}^n}\left( \frac{\partial}{\partial x_i}K_n(y-z)\right) \sum_{j,m=1}^n\left( \frac{\partial v^{*,\rho,l-1}_m}{\partial x_j}\frac{\partial v^{*,\rho,l-1}_j}{\partial x_m}\right) (l-1,z)dz.
\end{array}
\end{equation}
Now let us consider the first term on the right side of (\ref{solrep1111}), where the corresponding term of the solution has the form
\begin{equation}\label{solrep1111b}
 \begin{array}{ll}
\int_{{\mathbb R}^3}v^{*,\rho,l-1}_i(l-1,y)\frac{1}{\left( 2\sqrt{\pi\rho \nu(\tau-(l-1))}\right) ^n}\exp\left(-\frac{(x-y)^2}{4\rho_l\nu(\tau-(l-1))} \right) dy.
 \end{array}
 \end{equation}
For $\tau=l-1$ we have (\ref{inductivehyplminus1preface}) by assumption, so let us assume that $\tau>l-1$. Let 
\begin{equation}
B_{\frac{|x|}{2}}:=\left\lbrace |z|\leq \frac{|x|}{2}\right\rbrace 
\end{equation}
and consider (\ref{solrep1111}) as a sum of 
\begin{equation}\label{solrep1111a}
 \begin{array}{ll}
\int_{B_{\frac{|x|}{2}}}v^{*,\rho,l-1}_i(l-1,y)\frac{1}{\left( 2\sqrt{\pi\rho \nu(\tau-(l-1))}\right) ^n}\exp\left(-\frac{(x-y)^2}{4\rho_l\nu(\tau-(l-1))} \right) dy,
 \end{array}
 \end{equation}
and 
\begin{equation}\label{solrep1111b}
\begin{array}{ll}
\int_{{\mathbb R}^n\setminus B_{\frac{|x|}{2}}}v^{*,\rho,l-1}_i(l-1,y)\frac{1}{\left( 2\sqrt{\pi\rho \nu(\tau-(l-1))}\right) ^n}\exp\left(-\frac{(x-y)^2}{4\rho_l\nu(\tau-(l-1))} \right) dy.
\end{array}
\end{equation}
For the first part and $x\neq 0$ consider the estimate similar as in
(\ref{simpleest}) below, i.e., the estimate
\begin{equation}\label{simpleest3}
  \begin{array}{ll}
\sup_{|y|\leq \frac{|x|}{2}}{\big |}G_{l}(t-s,x-y){\big |}
\leq {\big |}C(t-(l-1))^{m-n/2}\frac{2^m}{|x|^{2m}} {\big |},
\end{array}
\end{equation}
which leads to polynomial decay of any order $2m>0$ for the part (\ref{solrep1111a}) in $x$ with the assumption (\ref{inductivehyplminus1preface}), and for the second part (\ref{solrep1111b}) we have the upper bound
\begin{equation}\label{solrep1111b}
\begin{array}{ll}
\int_{{\mathbb R}^n\setminus B_{\frac{|x|}{2}}}\frac{C}{1+|y|^2}\frac{1}{2\sqrt{\pi\rho_l \nu(\tau-(l-1))}^n}\exp\left(-\frac{(x-y)^2}{4\rho_l\nu(\tau-(l-1))} \right) dy\\
\\
\leq \int_{{\mathbb R}^n\setminus B_{\frac{|x|}{2}}}\frac{4C}{4+|x|^2}\frac{1}{2\sqrt{\pi\rho_l \nu(\tau-(l-1))}^n}\exp\left(-\frac{(x-y)^2}{4\rho_l\nu(\tau-(l-1))} \right) dy\\
\\
\leq \frac{\tilde{C}}{1+|x|^2},
\end{array}
\end{equation}
where the  constant $\tilde{C}>0$ which is independent of $\tau-(l-1)>0$ is proportional to the $L^1$-norm of the heatkernel for $\tau>(l-1)$. For the second term in (\ref{solrep1111}) we observe that these terms contain products of functions $v^{*,\rho,l-1}_i(l-1,.)$ and derivatives ( which we assumed to be of polynomial decay of order 2 (\ref{inductivehyplminus1preface})) This implies that we have expressions with polynomial decay of order $4$ in convolutions with the Gaussian $G_l$ and in double convolutions with the Laplacian kernel and the Gaussian. This time we have an integral with respect to time and have to consider local limits $\tau\downarrow (l-1)$. We shall see below that the decay inherited from this term may be smaller than $4$ but is still of quadratic order. It is a remarkable fact that we do not need these considerations if we consider control functions. This leads us to the expectation (far beyond the scope of this paper) that the scheme of control functions is of use beyond the scope of Gaussian estimates.

This is another motivation of our construction of control functions $r^l_i$ at each time step defining an equation for $v^{r,*,\rho,l,k}_i=v^{*,\rho,l,k}_i+r^l_i$ along with
$v^{r,*,\rho,l-1}_i(l-1,.)=v^{*,\rho,l-1}_i(l-1,.)+r^{l-1}_i(l-1,.)$  below. We shall define $r^l_i$ dynamically such that
\begin{equation}
\begin{array}{ll}
v^{r,*,\rho,l}_i=v^{r,*,\rho,l-1}_i(l-1,.)+\delta v^{*,\rho,l,1}_i+\delta r^l_i+\sum_{k\geq 2}\delta v^{*,\rho,l,k}_i\\
\\
=v^{r,*,\rho,l}_i=v^{r,*,\rho,l-1}_i(l-1,.)+\sum_{k\geq 2}\delta v^{*,\rho,l,k}_i.
\end{array}
\end{equation}
Hence, for the controlled scheme we have preservation of polynomial decay if we have preservation of polynomial decay for the higher order correction terms. 
The problem then is whether polynomial decay of a certain order of terms $\delta v^{*,\rho,l,k}_i,~k\geq 2$ and $v^{*,\rho,l,k}_i,~k\geq 2$ is inherited from $\delta v^{*,\rho,l,1}_i$ involve $v^{*,\rho,l,1}_i$. However, the representations of the higher order correction terms $\delta v^{*,\rho,l,k}_i,~k\geq 2$ and of higher order approximations $v^{*,\rho,l,k}_i,~k\geq 2$ involve convolutions with products of lower order terms which are known to have polynomial decay of a certain order $m\geq 2$, such that the products have polynomial decay of order $2m$. Some polynomial decay may be lost in the estimates of the double convolution with a partial derivative of a Laplacian kernel and with the Gaussian and a partial derivative with a Gaussian, but this loss does not depend on the order of polynomial decay of the mentioned lower order terms.

Next we have a closer look at these phenomena. Consider the Cauchy problem in (\ref{Navleray*}). The solution of this linear problem can be represented in the form
\begin{equation}\label{Navleray*solrep}
 \begin{array}{ll}
v^{*,\rho,l,k}_i(\tau,x)=\int_{{\mathbb R}^n}v^{*,\rho,l-1}_i(l-1,y)G_l(\tau,x-y)dy+\\
\\
-\int_{l-1}^{\tau}\int_{{\mathbb R}^n}\rho_l\sum_{j=1}^n v^{*,\rho,l,k-1}_j\frac{\partial v^{*,\rho,l,k-1}_i}{\partial x_j}(s,y)G_l(\tau-s,x-y)dyds\\
\\
+\int_{l-1}^{\tau}\int_{{\mathbb R}^n}\rho_l\sum_{j,m=1}^n\int_{{\mathbb R}^n}\left( \frac{\partial}{\partial x_i}K_n(z-y)\right) \times \\
\\
\times \sum_{j,m=1}^n\left( \frac{\partial v^{*,\rho,l,k-1}_m}{\partial x_j}\frac{\partial v^{*,\rho,l,k-1}_j}{\partial x_m}\right) (\tau,y)dz G_l(\tau-s,x-y)dyds.
\end{array}
\end{equation}
We say that
\begin{equation}
\begin{array}{ll}
v^{*,\rho,l,k}_i \mbox{ is of polynomial decay of order $m\geq 2$}\\
\\
\mbox{ for derivatives up to order $p\geq 0$ if for some finite  $C>0$}\\
\\
\sum_{|\alpha|\leq p}\sup_{\tau\in [l-1,l]}|D^{\alpha}_x v^{*,\rho,l,k}_i(\tau,y)|\leq \frac{C}{1+|y|^m}.
\end{array}
\end{equation}
Similarly for the functional increments $\delta v^{*,\rho,l,k}_i$.
The spaces of functions of polynomial decay of order $m\geq 2$ form an algebra. Especially, if $v^{*,\rho,l,k-1}_i,v^{*,\rho,l,k}_i$ are of polynomial decay of order $m\geq 2$ for derivatives up to order $p\geq 0$, then we have that functional increments $\delta v^{*,\rho,l,k}_i$ are of polynomial decay of order $m\geq 2$ and for derivatives up to order $p\geq 0$ . Moreover products of such functions have polynomial decay of order $2m$ for derivatives up to order $p$. As we pointed out, the fundamental solution evaluated at time $\tau-s=1$, i.e., the function
\begin{equation}
G_l(1,z)=\frac{1}{\left( 2\sqrt{\pi\rho \nu}\right) ^n}\exp\left(-\frac{z^2}{4\rho_l\nu} \right) 
\end{equation}
is clearly of polynomial decay of any order. However if $s\uparrow\tau$ we have to take care of weakly singular behavior, i.e.,
\begin{equation}
G_l(\tau-s,x-y)=\frac{1}{\left( 2\sqrt{\pi\rho \nu(\tau-s)}\right) ^n}\exp\left(-\frac{(x-y)^2}{4\rho_l\nu(\tau-s)} \right) 
\end{equation} 
need to be consired carefully as $s$ is close to $\tau$ and $x$ is close to $y$.
In this situation it is helpful that the terms in (\ref{Navleray*solrep}) involve products of polynomial decay of order $m\geq 2$ inductively and may preserve polynomial decay of the same order. 
Such terms appear in the representation of all increments $\delta v^{*,\rho,l,k}$ for $k\geq 1$. The representations of these functional increments consist of convolutions involving products of value functions with the fundamental solution $G_l$. For a partition of unity $\phi_B, 1-\phi_B$ with an appropriate function $\phi_B\in C^{\infty}$ supported in a ball $B$ of radius $1$ around the origin we can use
\begin{equation}
{\big |}\phi_BG_l(\tau-s,x-y){\big |}\leq \frac{C}{(\tau-s)^{\mu}|x-y|^{n-2\alpha}}
\end{equation}
for $\alpha\in (0,1)$, and then use convolution estimates. For the complementary function $(1-\phi_l)G_l$ we may use rough estimates.
These considerations motivate the following definition.
\begin{defi}
Assume that for all $1\leq i\leq n$ the functions $v^{*,\rho,l,1}_i,\delta v^{*,\rho,l,1}_i$ have polynomial decay of order $m\geq 2$ for derivatives up to order $p$. We say that polynomial decay of order $m$ for derivatives up to order $p\geq 0$ is inherited for the higher order correction terms $\delta v^{*,\rho,l,k}_i,~k\geq 2$, if for all $1\leq i\leq n$ these higher order terms have polynomial decay of order $m\geq 2$ for derivatives up to order $p$.
\end{defi}
 We have
\begin{lem}
Polynomial decay of order $m\geq 2$ for derivatives up to order $p\geq 0$ is inherited by the higher order correction terms.
\end{lem}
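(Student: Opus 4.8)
The plan is to proceed by induction on the iteration index $k\ge 1$, reading off decay from the integral representation (\ref{Navleray*solrep}). The case $k=1$ is precisely the hypothesis of the preceding definition, namely that $v^{*,\rho,l,1}_i$ and $\delta v^{*,\rho,l,1}_i$ decay of order $m\ge 2$ for derivatives up to order $p$. Assuming this for $v^{*,\rho,l,k-1}_i$ for all $1\le i\le n$, I prove it for $v^{*,\rho,l,k}_i$; the claim for the increment $\delta v^{*,\rho,l,k}_i=v^{*,\rho,l,k}_i-v^{*,\rho,l,k-1}_i$ then follows because functions of a fixed polynomial decay order form a vector space (indeed an algebra, as noted before the statement). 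To reach derivatives up to order $p$ I differentiate (\ref{Navleray*solrep}) under the integral and shift the $x$-derivatives onto the source factors via $D^{\alpha}_x(f*G_l)=(D^{\alpha}_x f)*G_l$, so that it suffices to treat the case $\alpha=0$ with each source factor replaced by an arbitrary derivative of order $\le p$; all such derivatives inherit decay of order $m$ by the inductive hypothesis.

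Two of the three terms are routine. For the free term $\int v^{*,\rho,l-1}_i(l-1,y)\,G_l(\tau-(l-1),x-y)\,dy$ I repeat the near/far decomposition already carried out in (\ref{solrep1111a})--(\ref{solrep1111b}): over $B_{\frac{|x|}{2}}$ the Gaussian is exponentially small since $|x-y|\ge |x|/2$ (estimate (\ref{simpleest3})), and over the complement one uses the decay of the datum together with the uniform bound on $|G_l|_{L^1}$; this preserves order $m$. For the convection term the source $\sum_j v^{*,\rho,l,k-1}_j\,\partial_{x_j}v^{*,\rho,l,k-1}_i$ is, by the inductive hypothesis and the algebra property, a product of order-$m$ functions and hence decays of order $2m\ge m$; the same spatial near/far split yields order $m$ uniformly in $s\in[l-1,\tau]$, and the $s$-integral over the bounded interval---with the uniform $L^1$-bound of $G_l(\tau-s,\cdot)$ in space and the integrable weak singularity as $s\uparrow\tau$---leaves the order unchanged.

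The real obstacle is the Leray (double-convolution) term, where the quadratic source $Q_i(\tau,z)=\sum_{j,m}\big(\partial_{x_j}v^{*,\rho,l,k-1}_m\,\partial_{x_m}v^{*,\rho,l,k-1}_j\big)(\tau,z)$, of decay order $2m$ by the algebra property, is first convolved with $\partial_{x_i}K_n$ and only afterwards with $G_l$. Here the singular kernel $\partial_{x_i}K_n$ has an integrable singularity at the origin and tail decay only of order $n-1=2$ for $n=3$, so the potential $\int(\partial_{x_i}K_n(y-z))\,Q_i(\tau,z)\,dz$ cannot in general inherit the full order $2m$; this is exactly the source-independent loss anticipated in the text, and its careful quantification is the crux. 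I would split the kernel via the partition of unity $\phi_B,1-\phi_B$ with $\phi_B\in C^{\infty}$ supported in the unit ball: on $\phi_B\,\partial_{x_i}K_n$ use $|\phi_B\,\partial_{x_i}K_n(z)|\lesssim |z|^{-(n-1)}$ (integrable for $n=3$) together with a near/far split in $z$ about $y$, and on the regular part $(1-\phi_B)\,\partial_{x_i}K_n$, which decays of order $n-1$, split the $z$-integral over $B_{\frac{|y|}{2}}$ and its complement. The region $|z|\le|y|/2$, where $|y-z|\ge|y|/2$ forces $|\partial_{x_i}K_n(y-z)|\lesssim |y|^{-(n-1)}$ against the finite mass of $Q_i$, dominates and produces decay of the kernel-dictated quadratic order announced in the text, which is precisely order $m$ at the base value $m=2$ recorded in (\ref{inductivehyplminus1preface}); the remaining regions contribute strictly faster decay. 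The subsequent convolution with $G_l$ preserves this order by the near/far argument already used, and the $s$-integral is treated as in the convection term. Combining the three contributions and invoking the vector-space property for $\delta v^{*,\rho,l,k}_i$ closes the induction.
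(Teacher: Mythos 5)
Your architecture is essentially the paper's: induction over the iteration index, near/far splits about $|x|/2$, partitions of unity $\phi_B,\,1-\phi_B$ applied both to the Gaussian and to $\partial_{x_i}K_n$, products of order-$m$ functions yielding order $2m$, and a kernel-limited quadratic bound for the Leray potential. The one structural deviation --- inducting on the full approximations $v^{*,\rho,l,k}_i$ via (\ref{Navleray*solrep}) and recovering the increments by subtraction, instead of working directly with the increment representation (\ref{NavlerayII*}), whose zero initial data removes the free term entirely --- is legitimate, and your near/far treatment of the extra free term is correct. Note also that your Leray estimate, like the paper's written proof, really establishes only the case $m=2$: the tail of $\partial_{x_i}K_n$ decays only of order $n-1=2$ in dimension $n=3$, so without shifting derivatives onto the kernel by partial integration (the device the paper uses in the analogous lemma of Section 7) the potential cannot decay faster than quadratically no matter how fast the quadratic source decays; you acknowledge this, so on that point you are at parity with the paper rather than behind it.

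There is, however, one step that fails as stated: the reduction of the derivative cases to $\alpha=0$ by moving \emph{all} $x$-derivatives onto the source. The sources in (\ref{Navleray*solrep}) already carry one derivative: they are $\sum_j v^{*,\rho,l,k-1}_j\,\partial_{x_j}v^{*,\rho,l,k-1}_i$ and $\sum_{j,m}\partial_{x_j}v^{*,\rho,l,k-1}_m\,\partial_{x_m}v^{*,\rho,l,k-1}_j$. Applying $D^{\alpha}_x$ with $|\alpha|=p$ and expanding by Leibniz produces terms such as $D^{\gamma}_x v^{*,\rho,l,k-1}_j\,D^{\delta}_x\partial_{x_j}v^{*,\rho,l,k-1}_i$ with $|\delta|=p$, i.e.\ derivatives of the previous iterate of order $p+1$, which neither the hypothesis of the lemma nor your inductive assumption controls. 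The paper avoids exactly this by shifting only $|\alpha|-1$ derivatives onto the source and leaving one derivative on the Gaussian (cf.\ (\ref{vrepalpha})), exploiting that $|\phi_B G_{l,j}(\tau-s,x-y)|\leq C(\tau-s)^{-\mu}|x-y|^{-(n+1-2\mu)}$ for $\mu\in(1/2,1)$ is still locally integrable in space and time; with that single modification your argument closes.
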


\begin{proof}
We consider the case $m=2$ and $p=2$. Higher order polynomial decay up to higher order derivatives can be proved similarly.
We consider the representation of the functional increment in (\ref{NavlerayII*}). 

We may use partitions of unity $\phi_B,1-\phi_B$ and estimates for the truncated Gaussian as considered in (\ref{simpleest}) below, i.e., the estimate
\begin{equation}\label{simpleest2}
  \begin{array}{ll}
{\big |}\phi_B(x-y)G_{l}(t-s,x-y){\big |}
\leq {\big |}\phi_1(x-y)C(t-s)^{m-n/2}\frac{1}{(x-y)^{2m}} {\big |},
\end{array}
\end{equation}
for $m=1$ and some constant $C>0$. Here we choose $m=1$ since we want to have local spatial integrability. Note that with this choice we have integrability with respect to time, too. We have
\begin{equation}
\sum_{|\alpha|\leq 2}\max_{1\leq j\leq n}\sup_{s\in [l-1,l]}|D^{\alpha}_x v^{*,\rho,l,1}_j(s,y)|\leq \frac{C}{1+|y|^2},
\end{equation}
\begin{equation}
\sum_{|\alpha|\leq 2}\max_{1\leq j\leq n}\sup_{s\in [l-1,l]}|D^{\alpha}_x\delta v^{*,\rho,l,1}_j(s,y)|\leq \frac{C}{1+|y|^2},
\end{equation}
and inductively for $k-1\geq 1$ we assume
\begin{equation}
\sum_{|\alpha|\leq 2}\max_{1\leq j\leq n}\sup_{s\in [l-1,l]}|D^{\alpha}_x\delta v^{*,\rho,l,k-1}_j(s,y)|\leq \frac{C}{1+|y|^2}.
\end{equation}
We consider the right side of (\ref{NavlerayII*}) term by term. For $n=3$ we have
\begin{equation}
\begin{array}{ll}
{\Big |}-\int_{l-1}^{l}\int_{B}\rho_l\sum_{j=1}^n \delta v^{*,\rho,l,k-1}_j\frac{\partial v^{*,\rho,l,k-1}_i}{\partial x_j}(s,y)\times\\
\\
\times \phi_B(x-y)G_l(\tau-s,x-y)dyds{\Big |}\\
\\
\leq \int_{l-1}^{l}\int_{B}\frac{C}{1+|y|^2}\frac{C}{1+|y|^2}{\big |}\phi_1(x-y)C(t-s)^{2-3/2}\frac{1}{(x-y)^{2}} {\big |}dy\\
\\
\leq \frac{\tilde{C}}{1+|x|^{4+2-3}}
\leq \frac{\tilde{C}}{1+|x|^{2}}
\end{array}
\end{equation}
 for some constant $\tilde{C}$ (generous in the last step). The complement of the truncated Gaussian, i.e., the function $(\tau,z)\rightarrow (1-\phi_B(z))G_B(\tau-s,z)$ behaves quite smoothly as $s\uparrow \tau$. Indeed, for the third term we have for all $p\geq 1$
\begin{equation}
\begin{array}{ll}
\sup_{\tau\neq s,\tau-s\in [l-1,l]}|(1-\phi_B(z))G_B(\tau-s,z)|\leq  \frac{C}{1+z^{2p}}
\end{array}
\end{equation}
for some $C>0$, hence, choosing $p=m+3$, we have for $n=3$ the rough estimate
\begin{equation}
\begin{array}{ll}
{\Big |}-\int_{l-1}^{l}\int_{B}\rho_l\sum_{j=1}^n \delta v^{*,\rho,l,k-1}_j\frac{\partial v^{*,\rho,l,k-1}_i}{\partial x_j}(s,y)\times\\
\\
\times(1-\phi_B(x-y))G_l(\tau-s,x-y)dyds{\Big |}
\leq \frac{\tilde{C}}{1+|x|^{2}}.
\end{array}
\end{equation}
There other type of terms remaining involve a double convolution. 
For the convolution with the Laplacian kernel we note the the first order derivative of the Laplacian kernel satisfies
\begin{equation}
K_{,i}(y)\sim \frac{y_i}{|y|^n},
\end{equation}
which is a decay of second order effectively. We have again the inductive assumption and convolution with the Laplacian kernel gives a decay of order $2p+2-n$, i.e., a decay of order $3$ in case $p=2$ and $n=3$. Here we note that we have polynomial decay of order $p$ for the functions $v^{*,\rho,l,k}_i$ and their derivatives up to secons order. Hence, we may use a partition of unity $\phi_B, (1-\phi_B)$ and estimate terms of the sum
\begin{equation}
K_{,i}(y)=\phi_B(y)K_{,i}(y)+(1-\phi_B(y))K_{,i}(y),
\end{equation}
and then shift derivatives in the convolutions as we need.
We have
\begin{equation}
\begin{array}{ll}
{\Big |}\int_{l-1}^{\tau}\int_{{\mathbb R}^n}\rho_l\sum_{j,m=1}^n\int_{{\mathbb R}^n}\left( \frac{\partial}{\partial x_i}K_n(z-y)\right)\times \\
\\
\times\left( \sum_{m,j=1}^n \frac{\partial \delta v^{*,\rho,l,k-1}_j}{\partial x_m}\left(\frac{\partial v^{*,\rho,l,k-1}_m}{\partial x_j}+\frac{\partial v^{*,\rho,l,k-2}_m}{\partial x_j}\right)\right)  (s,y)\times\\
\\
\times \phi_B(x-y)G_l(\tau-s,x-y)dyds{\Big |}\\
\\
\leq {\Big |}\int_{{\mathbb R}^n}\int_{{\mathbb R}^n}\frac{C}{(z-y)^2}\frac{C}{1+|y|^2}\frac{C}{1+|y|^2}dy\frac{C}{|x-z|^{2}}dz{\Big |}
\leq {\Big |}\frac{\tilde{C}}{1+|x|^{2}}{\Big |}
\end{array}
\end{equation}
for $p\geq 2$ and some constant $\tilde{C}>0$.
For the last term we have a convolution with $(1-\phi_B(x-y))G_l(\tau-s,x-y)$ which is of strong polynomial decay without singularities, and it is clear that in this case we can again do rough estimates. For the first and second order derivatives of the value functions $v^{*,\rho,l,k}_i$ we have a similar situation. We shall see that we have representations involving only derivatives up to first order of the Gaussian where we have a priori estimates of the form
\begin{equation}
{\Big |}\phi_B(x-y)G^l_{,i}(\tau-s,x-y{\Big |}\leq \frac{C}{(\tau-s)^{\alpha}(x-y)^{n+1-2\alpha}}
\end{equation}
for $\alpha \in (0.5,1)$. The proof may then proceed similarly as above, and we come back to this situation below.
\end{proof}

\section{Global regularity and growth behavior of the control function for the simple scheme}
The following considerations are not needed for the construction of global upper bounds in $C^1\left([0,T],H^m\cap C^m\right)$, but are inserted here for additional information. The reader interested in the proof of theorem \ref{uncontrolledthm1} and theorem  \ref{uncontrolledthm2} may skip this and the next section.
Consider first a simple control function with the increment
\begin{equation}\label{controlincrementobservation}
\delta r^l_i(l,x)=\int_{l-1}^l\int_{{\mathbb R}^ n}\frac{-v^{r,*,\rho,l-1}_i(l-1,y)}{C}G_l(l,x;s,y)dyds.
\end{equation}
A natural question which may be posed is: why does this definition not reduce the polynomial decay of the control functions, and therefore, with a delay of one time step the polynomial decay of the controlled velocity functions as well ? The reason is that the regularity of the integrand can be used here. If the controlled velocity function $v^{r,*,\rho,l-1}_i(l-1,.)$ is smooth, i.e., if $v^{r,*,\rho,l-1}_i(l-1,.)\in C^{\infty}$, then this holds for the increment (\ref{controlincrementobservation}) as well. In the classical model where we have constant viscosity this follows easily from the fact that 
(\ref{controlincrementobservation}) is a convolution. For an arbitrary multivariate derivative $D^{\alpha}_x$ of order $|\alpha|\geq 0$ we may use the smoothness of the controlled velocity function and write
\begin{equation}\label{convobservation}
\begin{array}{ll}
D^{\alpha}_x\delta r^l_i(l,x)=\int_{l-1}^l\int_{{\mathbb R}^ n}\frac{-v^{r,*,\rho,l-1}_i(l-1,y)}{C}D^{\alpha}_xG_l(l,x;s,y)dyds\\
\\
=\int_{l-1}^l\int_{{\mathbb R}^ n}D^{\alpha}_y\frac{-v^{r,*,\rho,l-1}_i(l-1,y)}{C}G_l(l,x;s,y)dyds,
\end{array}
\end{equation}
which shows that the controlled function increment $\delta r^l_i(l,.)$ is smooth if the controlled velocity function $v^{r,*,\rho,l}_i(l-1,.)$ of the previous time step $l-1$ is smooth. Moreover, if the control function $r^{l-1}_i(l-1,.)$ is smooth it follows that the control function $r^l_i(l,.)$, the controlled velocity function $v^{r,*,\rho,l}_i$ and the original velocity function $v^{*,\rho,l}_i(l,.)$ inherit smoothness from the previous time step. This phenomenon has some consequences for the growth behavior of the simple control function considered so far. We have already observed that convolutions of the type considered in (\ref{convobservation}) may be estimated by breaking up the integral into a local part around $x$ and a complementary part. For a ball $B_{\epsilon}(x)$ of radius $\epsilon>0$ around $x$  we may write for $0<\mu<1$
\begin{equation}\label{convobservation2}
\begin{array}{ll}
{\Big |}D^{\alpha}_x\delta r^l_i(l,x){\Big |}\leq\\
\\
{\Big |}\int_{l-1}^l\int_{B_{\epsilon}(x)}D^{\alpha}_y\frac{-v^{r,*,\rho,l-1}_i(l-1,y)}{C}\frac{C}{(l-s)^{\mu}|x-y|^{n-2\mu}}dyds{\Big |}\\
\\
+{\Big |}\int_{l-1}^l\int_{{\mathbb R}^ n\setminus B_{\epsilon}(x)}D^{\alpha}_y\frac{-v^{r,*,\rho,l-1}_i(l-1,y)}{C}G_l(l,x;s,y)dyds{\Big |}.
\end{array}
\end{equation}
The second integral on the right side of (\ref{convobservation2}) is over a domain where $G_l$ is analytic. Moreover, polynomial decay any order $p>0$ of this term is inherited from $D^{\alpha}_y\frac{-v^{r,*,\rho,l-1}_i(l-1,.)}{C}$ as we may use the convolution rule to write the second integral on the right side (\ref{convobservation2}) for $|x|\neq 0$ in the form
\begin{equation}\label{convobservation2}
\begin{array}{ll}
{\Big |}\int_{l-1}^l\int_{{\mathbb R}^ n\setminus B_{\epsilon}(x)}D^{\alpha}_y\frac{-v^{r,*,\rho,l-1}_i(l-1,x-y)}{C}G_l(l,y;s,0)dyds{\Big |}\\
\\
\leq {\Big |} \int_{l-1}^l\int_{{\mathbb R}^ n\setminus B_{\epsilon}(x)}\frac{c}{1+|x-y|^p}G_l(l,y;s,0)dyds{\Big |}\\
\\
\leq {\Big |} \int_{l-1}^l\int_{{\mathbb R}^ n\setminus B_{\epsilon}(x)}\frac{c}{1+|x-y|^p}\frac{\tilde{c}}{1+|y|^n}dyds{\Big |}\leq\frac{\tilde{C}}{|x|^p}
\end{array}
\end{equation}
The first integral on the right side of (\ref{convobservation2}) may be estimated in polar coordinates $(r,\phi_1,\cdots,\phi_n)$ using iterated partial integration. For $\mu\in \left(\frac{1}{2},1\right)$ we need only $n-1$ partial integrations to get the upper bound   
\begin{equation}
\begin{array}{ll}
{\Big |}\int_{l-1}^l\int_{B_{\epsilon}(x)}D^{\alpha}_y\frac{-v^{r,*,\rho,l-1}_i(l-1,x-y)}{C}\frac{C}{(l-s)^{\mu}|y|^{n-2\mu}}dyds{\Big |}\\
\\
\leq (n-1)C_{\epsilon}+\\
\\
{\Big |}\int_{l-1}^l\int_{B_{\epsilon}(x)}D^{n-1}_r\left( D^{\alpha}_y\frac{-v^{r,*,\rho,l-1}_i(l-1,x-y)}{C}\right)_{\mbox{pol}} \frac{C}{(l-s)^{\mu}}|y|^{2\mu-1}dyds{\Big |},
\end{array}
\end{equation}
where $C_{\epsilon}$ is an upper bound for boundary terms which are integrals over the sphere $S^n_{\epsilon}(x)$, i.e., the boundary of $B_{\epsilon}(x)$. Here we understand that
\begin{equation}
\left( D^{\alpha}_y\frac{-v^{r,*,\rho,l-1}_i(l-1,x-y)}{C}\right)_{\mbox{pol}}
\end{equation}
is the $D^{\alpha}_y\frac{-v^{r,*,\rho,l-1}_i(l-1,.)}{C}$ written in polar coordinates. Summing up these conservations we get for $\mu>\frac{1}{2}$ an upper bound $\sim 1$ for the control function increment as desired. Note that the surface terms mentioned become small for small $\epsilon>0$ for this choice of $\mu>\frac{1}{2}$.

Next concerning extended control functions we come to a similar conclusion for the additional term
\begin{equation}
-\delta v^{r^{l-1},*,\rho,l-1}_i(\tau,x),
\end{equation}
for reasons discussed  in the previous section, and the additional source term can be estimated as above.  

\section{Global regularity and growth behavior of the control function for the scheme with small foresight}
This section provides additional information in the context of the stronger result theorem \ref{uncontrolledthm3}.
The global linear upper bound is sufficient in order to prove global regular existence for the incompressible Navier Stokes equation. However, especially the scheme with small foresight introduced above allows us to sharpen the result concerning upper bounds a bit. This can be of relevance also for the investigation of long time behavior and its relation to possible singular behavior of degenerate equations. It seems that the H\"{o}rmander condition for the linear diffusion terms of a (generalized) Navier Stokes equation is the natural no-singularity-criterion for Navier Stokes equations. As we shall argue more explicitly elsewhere full degeneracy to a local Euler equation even locally seems to be linked to vorticity singularities. This motivates the considerations of uniform global upper bounds, where we reconsider the scheme with slight foresight introduced above. As the scheme with small foresight is defined in terms of local solutions of the Navier Stokes equation the definition of the control function within the scheme is essentially the same for the scheme $v^{r,\rho,l}_i$ and for the simplified scheme $v^{r,*,\rho,l}_i$. We therefore may drop an additional star in the notation the sets $V^{r^{l-1},l,i}_{v+r+},~V^{r^{l-1},l,i}_{v-r+}$ etc.

The  control function increment $\delta r^l_i=r^{l}_i-r^{l-1}_i$ is defined as follows. Recall the definition of the function $g^l_i$ in section 2. For some $C>0$ (determined below), given $l\geq 1$, and for all $(\tau ,x)\in [l-1,l]\times {\mathbb R}^n$ we consider
\begin{equation}\label{controlincrementint}
\begin{array}{ll}
\delta r^l_i(\tau,x):=
\left\lbrace \begin{array}{ll}
 -\int_{l-1}^{\tau}\int_{{\mathbb R}^n}g^l_i(y) p_l(\tau-s,x-y)dyds\\
 \\
  \mbox{ if } x\in \overline{V^{r^{l-1},l,i}_{v+r+}\cup V^{r^{l-1},l,i}_{v-r-}},\\
 \\
 -\int_{l-1}^{\tau}g^l_i(y) p_l(\tau-s,x-y)dyds\\
 \\
  \mbox{ if } x\in  V^{r^{l-1},l,i}_{v+r-}\cup V^{r^{l-1},l,i}_{v-r+}.
 
\end{array}\right.
\end{array}
\end{equation}
Here, in (\ref{controlincrementint}) we used
\begin{equation}
\overline{V^{l-1,i}_{v+r+}\cup V^{l-1,i}_{v-r-}},
\end{equation}
 the closure of the $V^{l-1,i}_{v+r+}\cup V^{l-1,i}_{v-r-}$ with respect to the standard topology on ${\mathbb R}^n$. Note that the local heat kernel smoothes the control function which would be only Lipschitz continuous otherwise.  
Now assume that at some time $l-1$ with $l\geq 1$ we have global upper bounds of the controlled velocity functions $v^{r,*,\rho,l-1}_i(l-1,.)$ and of the control functions $r^{l-1}_i(l-1,.)$ such that for some $C\gg 1$
\begin{equation}
\max_{1\leq i\leq n,~0\leq |\alpha|\leq m}\sup_{x\in {\mathbb R}^n}{\big |}D^{\alpha}_xv^{r,*,\rho,l-1}_i(l-1,x){\big |}\leq C,
\end{equation}
and
\begin{equation}
\max_{1\leq i\leq n,~0\leq |\alpha|\leq m}\sup_{x\in {\mathbb R}^n}{\big |}D^{\alpha}_xr^{l-1}_i(l-1,x){\big |}\leq C^2.
\end{equation}
In case of $l=1$ we may assume that we have the upper bound
\begin{equation}
\max_{1\leq i\leq n,~0\leq |\alpha|\leq m}\sup_{x\in {\mathbb R}^n}{\big |}D^{\alpha}_xh_i(x){\big |}\leq C
\end{equation}
for the initial data $h_i$, where we define
define
\begin{equation}\label{controlstart}
r^{0}_i(0,.):=\frac{h_i(.)}{C}
\end{equation}
in order to initialize the control function (note that we have the same sign for the control function and the controlled velocity function at this time but this does not really matter as we have proportionality $\frac{1}{C}$ for big $C>1$).
In order to observe the growth for the next to time steps from time $l-1$ to time $l+1$ we first consider an appropriate choice of the constant $C$ in relation to the time step size $\rho_l>0$.  For  simplicity we consider the growth of the value functions themselves, i.e., we mainly concentrate on growth estimates for the controlled velocity function $v^{r,*,\rho,l}_i,~1\leq i\leq n$ and for the control functions $r^{l}_i,~1\leq i\leq n$ themselves  in a first step. This is essential and simplifies the notation. The argument for spatial derivatives is quite similar.  Recall that at time $l$ we have for all $1\leq i\leq n$
\begin{equation}
v^{r,*,\rho,l}_i(l,.)=v^{r^{l-1},*,\rho,l-1}_i(l,.)+\delta r^{l}_i(l,.)
\end{equation}
where $v^{r^{l-1},*,\rho,l-1}_i,~1\leq i\leq n$ is the solution of the local uncontrolled Navier Stokes equation with controlled data, i.e., we have
\begin{equation}
v^{r^{l-1},*,\rho,l-1}_i(l-1,.)=v^{r,*,\rho,l-1}_i(l-1,.),
\end{equation}
and 
\begin{equation}\label{bothsides}
\begin{array}{ll}
\frac{\partial v^{r^{l-1},*\rho,l}_i}{\partial \tau}=\rho_l\nu\sum_{j=1}^n \frac{\partial^2 v^{r^{l-1},*,\rho,l}_i}{\partial x_j^2} 
-\rho_l\sum_{j=1}^n v^{r^{l-1},*,\rho,l}_j\frac{\partial v^{r^{l-1},*\rho,l}_i}{\partial x_j}+\\
\\
\rho_l\sum_{j,m=1}^n\int_{{\mathbb R}^n}\left( \frac{\partial}{\partial x_i}K_n(x-y)\right) \sum_{j,m=1}^n\left( \frac{\partial v^{r^{l-1},*,\rho,l}_m}{\partial x_j}\frac{\partial v^{r^{l-1},*,\rho,l}_j}{\partial x_m}\right) (\tau,y)dy
\end{array}
\end{equation}
Local time analysis of the Navier Stokes equation (e.g. using the local contraction result above shows that 
\begin{equation}
\max_{1\leq i\leq n,~0\leq |\alpha|\leq m}\sup_{x\in {\mathbb R}^n}{\big |}D^{\alpha}_xv^{r^{l-1},*,\rho,l-1}_i(l-1,x){\big |}\leq C+1,
\end{equation}
 
Integrating both sides of equation (\ref{bothsides}) from time $l-1$ to time $l$ we get
the estimate
\begin{equation}\label{deltavnew}
\begin{array}{ll}
\sup_{x\in {\mathbb R}^n}{\big |}\delta v^{r^{l-1},*\rho,l}_i(l,x){\big |}\\
\\
\leq \rho_l\nu n(C+1) +\rho_l n(C+1)^2 +\rho_l n^2C_{\mbox{kp}}(C+1)^2,
\end{array}
\end{equation}
where $C_{kp}$ is a constant which is related to the Laplacian kernel $K_n$ and to the inheritance of some order of polynomial decay. Note that this means that we can get any given upper bound $\epsilon >0$ of the left side in (\ref{deltavnew}) choosing the time step size
\begin{equation}
\rho_l=\frac{\epsilon}{\nu n(C+1) +n(C+1)^2 +n^2C_{\mbox{kp}}(C+1)^2}.
\end{equation}
Then it can be observed that for small $\epsilon >0$ we have that for all $x\in {\mathbb R}^n$ the function value
\begin{equation}
{\big |}v^{r,*,\rho,l}_i(l,x)+r^l_i(l,x){\big |}
\end{equation}
has the tendency to decrease, i.e., decreases after finitely many time steps while we have a uniform upper bound $C^2+1$ for the control functions if we start with (\ref{controlstart}). Next we shall analyze this more precisely, and we shall extend the analysis further below in the section on global uniform upper bounds, but provide the main ideas next. Note that $C$ is large and we start with (\ref{controlstart}) and with a small time step size $\rho_l$ which leads to a small upper bound $\epsilon >0$ for the moduli increments $\delta v^{r^{l-1},*,\rho,l}_i$ (and their derivatives up to order $2$ at least). Assume at time step $l-1$ the upper bounds $C$ and $C^2+1$ have been realized for the controlled velocity functions and the control function respectively (this is clearly true for $l=1$). Let us assume first that the function values $v^{r,*,\rho,l-1}_i(l-1,x)$ and $r^{l-1}_i(l-1,x)$ have the same sign which is true at time $l=1$. A local solution of the local uncontrolled incompressible Navier Stokes equation with controlled data leads to the function $v^{r^{l-1},*,\rho,l}_i(l,.)$. Then for given $1\leq i\leq n$ and $x\in {\mathbb R}^n$ we compare the signs of the values $v^{r^{l-1},*,\rho,l}_i(l,x)$ and the control function value $r^{l-1}_i(l-1,x)$. We consider some essential cases and assume that the modulus of the control function value is larger than $\frac{C^2}{2}$ at time $l-1$ such that for large $C$ it keeps its sign for several time steps. As both function values have the same sign at time step $l-1$ and $l$ at argument $x$ respectively, the control function decreases and considering several cases we then have the upper bound $C^2-\frac{1}{C^2}+\epsilon$ for the function ${\big |}r^{l}_i(l,x){\big |}$, where the $\epsilon >0$ is from an upper bound of the increment of the uncontrolled velocity function with controlled data of course. If the control function $v^{r,*,\rho,l}_i(l,x)$ still has the same sign as $v^{r,*,l-1}_i(l-1,x)$, then the modulus of the former value is strictly smaller than the value of the former. In this case both the control function value and the controlled velocity function value have decreased. If on the other hand the function values $v^{r,*,\rho,l}_i(l,x)$ and $v^{r,*,l-1}_i(l-1,x)$ have different signs, then the modulus of the former value is naturally bounded by the maximal decrease of the modulus of the control function value at $x$ for time $l-1$ to time $l$ plus a small upper bound of a velocity function increment , i.e., it is bounded by $1+\frac{1}{C^2}+\epsilon$. If in the latter case the modulus of $v^{r,*,\rho,l}_i(l,x)$ is larger then $\epsilon >0$, which is an upper bound of the increment $\delta v^{r^{l},*,\rho,l}_i(l+1,x)$, then in the next step we have different signs of $v^{r^l,*,\rho,l+1}_i(l+1,x)$ and $r^{l+1}_i(l+1,x)$. In this case we get
\begin{equation}
v^{r,*,\rho,l+1}_i(l+1,x)=v^{r^l,*,\rho,l+1}_i(l+1,x)+\delta r^l_i(l+1,x),
\end{equation}
where the control function increment value
\begin{equation}\label{controlincrementl+17}
\begin{array}{ll}
\delta r^{l}_i(l+1,x)=
-\int_{l}^{l+1}g^l_i(y)p_{l+1}(l-s,x-y)dy ds
\end{array}
\end{equation}
is close to
\begin{equation}\label{controlincrementl+18}
\begin{array}{ll}
-\int_{l}^{l+1}\left( \frac{2v^{r^{l},*,\rho,l+1}_i(l+1,y)}{C}+\frac{r^{l}_i(l,y)}{C^2}\right)p_{l+1}(l-s,x-y)dy ds
\end{array}
\end{equation}
The diffusion effect due to $p_{l+1}$ becomes arbitrary small as the time step size becomes small. This convolution with $p_l$ has a smoothing effect which is useful. This means that for small time step size the control function increment value in (\ref{controlincrementl+17}) is close to
\begin{equation}\label{closeob}
-\frac{2v^{r^{l},*,\rho,l+1}_i(l+1,x)}{C}-\frac{r^l_i(l,x)}{C^2}
\end{equation}
An upper bound for the modulus of the first summand is $\frac{2\left(1+\frac{1}{C^2}+\epsilon\right)}{C}$, and as we assumed that ${\big |}r^{l-1}_i(l-1,x){\big |}\geq \frac{C}{2}$, the upper bound for the modulus of the second summand, as we estimate several case at the same time). This means that in the case considered the control function decreases effectively form time $l$ to $l+1$ at the argument $x$. Hence, in the case considered in two time steps we have have a control function which decreases twice (recall the assumption that it is $\geq \frac{C^2}{2}$ at time $l-1$ in order to consider an 'interesting' case. The decrease of the control function part caused by the second term in (\ref{closeob}) is at least close two $1$ over two time steps in the case considered.  We get the upper bound
\begin{equation}
\begin{array}{ll}
{\big |}r^{l+1}_i(l+1,x){\big |}\leq {\big |}r^{l-1}_i(l-1,x){\big |}- \frac{2\left(1+\frac{2}{C^2}+\epsilon\right)}{C}-1-\frac{2}{C^2}-2\epsilon\\
\\
< {\big |}r^{l-1}_i(l-1,x){\big |}-1+\frac{1}{5}+\frac{1}{10}
\end{array}
\end{equation}
for $C\geq 10$ and $\epsilon =\frac{1}{C^2}$ for example. Hence we have a decreasing control function over two time steps in the considered case. There are still several possibilities for the controlled velocity function value $v^{r,*,\rho,l+1}_i(l+1,x)$ after two time steps. In the case considered we had the upper bound $1+\frac{1}{C^2}+\epsilon$ for a function $v^{r^l,*,\rho,l}_i(l+1,x)$ in the subcases where the latter function has a different sign from the control function value $r^l_i(l,x)$. It is clear then that it depends on the summands in (\ref{closeob}) whether $v^{r,*,\rho,l+1}_i(l+1,x)$ gets the same sign as $r^{l+1}_i(l+1,x)$. Several different subcases can be considered and the upshot is that the modulus of $v^{r,*,\rho,l+1}_i(l+1,x)+r^{l+1}_i(l+1,x)$ is smaller than the modulus of $v^{r,*,\rho,l-1}_i(l-1,x)+r^{l-1}_i(l-1,x)$ in the case considered. Going through all possible (finitely many) cases observations of this case  lead to the conclusion that the function $l\rightarrow v^{r,*,\rho,l}_i(l,x)+r^{l}_i(l,x)$ and $l\rightarrow r^l_i(l,x)$ have a tendency to fall simultaneously, i.e., for each $l$ there is an $m+l>l$ such that at time $m+l$ the moduli of both function values are smaller. This leads to an uniform global upper bound for the controlled velocity function and the control function, and hence to a global uniform upper bound for the velocity solution function of the incompressible Navier Stokes equation itself. Note furthermore that analogous considerations can be applied to spatial derivatives of the control functions and the controlled velocity functions. We come back to these considerations and to their possible impact on investigation of the long time behavior of velocity functions for the solutions of the incompressible Navier Stokes equation below in the section on the global upper bounds for the Leray projection term.   

\section{Proof of theorems \ref{mainthm} and \ref{mainthm2}}
Some variations of our argument do not use polynomial decay of certain orders of the initial data $v^{*,\rho,l-1}_i(l-1,.)$ and their spatial derivatives up to order $p=2$ at time step $l\geq 1$ and of the functional increment $\delta v^{\rho,l,1}_i$ and their spatial derivatives up to order $p= 2$, but it is convenient to have this, and something close to this seems to be needed. The last section showed that polynomial decay of order $2$ for derivatives up to order $2$  to the same order is inherited by the higher correction functions $\delta v^{*,\rho,l,k}_i$. 
We shall see below that for the controlled scheme  polynomial decay of the functions $v^{r,*,\rho,l,k}_i$ of order $m$ for derivatives up to order $m$ is inherited under weaker conditions even (due to the definition of the control functions $r^l_i$.
In a first variation of this proof we shall start with the assumption
\begin{equation}\label{inductivehyplminus1}
\max_{1\leq i\leq n}\sum_{|\alpha|\leq 2}\sup_{y\in {\mathbb R}^3}\left(1+|y|^2\right) {\Big |}D^x_{\alpha}v^{*,\rho,l-1}_i(l-1,y){\Big |}\leq C^{l-1}.
\end{equation}
Later we shall see that for the controlled scheme this polynomial decay of order $m\geq 2$ for derivatives up to order $m$ is inherited for weaker assumptions even. This means that we can transfer the result to the controlled scheme easier, if we work with the assumption (\ref{inductivehyplminus1}) first. In a second step we shall see that we can even get rid of this assumption and observe that the original assumption $v^{*,\rho,l-1}_i(l-1,.)\in C^2\cap H^2$ (which is in fact close) is sufficient. Furthermore we shall observe that for this first variation of argument it is effectively sufficient to have
\begin{equation}\label{inductivehyp0}
\max_{1\leq i\leq n}\sum_{|\alpha|\leq 2}\sup_{\tau\in [l-1,l],y\in {\mathbb R}^3}\left(1+|y|^{2-\alpha}\right) {\Big |}D^x_{\alpha}v^{*,\rho,l,k-1}_i(\tau,y){\Big |}\leq C^l_{k-1}
\end{equation}
for all $\tau\in [l-1,l]$ for some $\alpha \in (0,0.5)$ in the subiteration steps $k$. 
We have to control the growth of the series
\begin{equation}
(C^l_k)_k,
\end{equation}
but this will follow naturally from contraction.
 The argument of the contraction results ensure that the sequence of constants $C^l_k,k\geq 1$, starting with $C^l_0:=C^{l-1}$ has a uniform upper bound independent of the substep number $k$ of the form
 \begin{equation}
 C^l_k\leq C^{l-1}+1.
 \end{equation}
We shall choose the time step size $\rho_l$ such that this is true.
The hypothesis of form (\ref{inductivehyplminus1}) allows us to give weights to some factors in the $L^2$-estimates. An alternative is the use of Fourier transforms together with $L^2$-estimates for weighted products in the $L^2$-norms. This leads to $H^2$ estimates naturally. A closely related alternative is to use Fourier transforms as we did in \cite{KB2}, but the first method simplifies some arguments. First we consider $L^2$ theory. We start with $L^2$ estimates, and refine the argument such that in the end we have $H^m$-estimates for arbitrary $m\geq 0$. The $C^0\left( (l-1,l), H^m\right)$-estimates and $C^p\left( (l-1,l), H^m\right)$ for $p\geq 1$ are obtained similarly.

In order to prove the contraction property (\ref{contract}) we first consider representations of $\delta v^{\rho,l,k}_i=v^{\rho,l,k}_i-v^{\rho,l,k-1}_i$ in terms of the fundamental solution 
$G_l$ ( a heat kernel) defined above.
First observe that the equation for $\delta v^{\rho,l,k}_i$ is
\begin{equation}\label{NavlerayII*}
\left\lbrace \begin{array}{ll}
\frac{\partial \delta v^{*,\rho,l,k}_i}{\partial \tau}-\rho_l\nu\sum_{j=1}^n \frac{\partial^2 \delta v^{*,\rho,l,k}_i}{\partial x_j^2} 
=\\
\\ 
-\rho_l\sum_{j=1}^n v^{*,\rho,l,k-1}_j\frac{\partial \delta v^{*,\rho,l,k-1}_i}{\partial x_j}-\rho_l\sum_{j=1}^n \delta v^{*,\rho,l,k-1}_j\frac{\partial  v^{*,\rho,l,k-1}_i}{\partial x_j}
\\
\\
+\rho_l\sum_{j,m=1}^n\int_{{\mathbb R}^n}\left( \frac{\partial}{\partial x_i}K_n(x-y)\right) \sum_{j,m=1}^n\left( \frac{\partial v^{*,\rho,l,k-1}_m}{\partial x_j}\frac{\partial v^{*,\rho,l,k-1}_j}{\partial x_m}\right) (\tau,y)dy\\
\\
-\rho_l\sum_{j,m=1}^n\int_{{\mathbb R}^n}\left( \frac{\partial}{\partial x_i}K_n(x-y)\right) \sum_{j,m=1}^n\left( \frac{\partial v^{*,\rho,l,k-2}_m}{\partial x_j}\frac{\partial v^{*,\rho,l,k-2}_j}{\partial x_m}\right) (\tau,y)dy,\\
\\
\delta \mathbf{v}^{*,\rho,l,k}(l-1,.)=0.
\end{array}\right.
\end{equation}
Again this equation is considered on the domain $[l-1,l]\times {\mathbb R}^n$.
Next in terms of the fundamental solution $G_l$, and for all $1\leq i\leq n$ we have the representation
\begin{equation}\label{solrepk}
\begin{array}{ll}
\delta v^{*,\rho,l,k}_i(\tau,x)=\\
\\
+\rho_l\int_{l-1}^{\tau}\int_{{\mathbb R}^n}\left( -\sum_{j=1}^n \delta v^{*,\rho,l,k-1}_j\frac{\partial  v^{*,\rho,l,k-1}_i}{\partial x_j}\right)(s,y)G_l(\tau-s;x-y)dyds\\
\\
+\rho_l\int_{l-1}^{\tau}\int_{{\mathbb R}^n}\left( -\sum_{j=1}^n  v^{*,\rho,l,k-1}_j\frac{\partial  \delta v^{*,\rho,l,k-1}_i}{\partial x_j}\right)(s,y)G_l(\tau-s;x-y)dyds\\
\\
+\rho_l\int_{l-1}^{\tau}\int_{{\mathbb R}^n}\int_{{\mathbb R}^n}\left( \frac{\partial}{\partial x_i}K_n(z-y)\right)\times\\
\\
\times \left( \sum_{m,j=1}^n \frac{\partial \delta v^{*,\rho,l,k-1}_j}{\partial x_m}\left(\frac{\partial v^{*,\rho,l,k-1}_m}{\partial x_j}+\frac{\partial v^{*,\rho,l,k-2}_m}{\partial x_j}\right)\right)  (s,y)\times \\
\\
\times G_l(\tau-s,x-z)dydzds,
\end{array}
\end{equation}
where for convenience we rewrite the difference of the Leray projection terms observing that
\begin{equation}
\begin{array}{ll}
 \sum_{j,m=1}^n\left( \frac{\partial v^{*,\rho,l,k-1}_m}{\partial x_j}\frac{\partial v^{*,\rho,l,k-1}_j}{\partial x_m}\right)
 -\sum_{j,m=1}^n\left( \frac{\partial v^{*,\rho,l,k-2}_m}{\partial x_j}\frac{\partial v^{*,\rho,l,k-1}_j}{\partial x_m}\right)\\
 \\
 +\sum_{j,m=1}^n\left( \frac{\partial v^{*,\rho,l,k-2}_m}{\partial x_j}\frac{\partial v^{*,\rho,l,k-1}_j}{\partial x_m}\right)
 -\sum_{j,m=1}^n\left( \frac{\partial v^{*,\rho,l,k-2}_m}{\partial x_j}\frac{\partial v^{*,\rho,l,k-2}_j}{\partial x_m}\right)\\
\\
=\sum_{j,m=1}^n\left( \frac{\partial \delta v^{*,\rho,l,k-1}_m}{\partial x_j}\frac{\partial v^{*,\rho,l,k-1}_j}{\partial x_m}\right)+\sum_{j,m=1}^n\left( \frac{\partial v^{*,\rho,l,k-2}_m}{\partial x_j}\frac{\partial \delta v^{*,\rho,l,k-1}_j}{\partial x_m}\right)\\
\\
=\sum_{m,j=1}^n\left( \frac{\partial \delta v^{*,\rho,l,k-1}_j}{\partial x_m}\frac{\partial v^{*,\rho,l,k-1}_m}{\partial x_j}\right)+\sum_{j,m=1}^n\left( \frac{\partial v^{*,\rho,l,k-2}_m}{\partial x_j}\frac{\partial \delta v^{*,\rho,l,k-1}_j}{\partial x_m}\right)\\
\\
=\sum_{m,j=1}^n \frac{\partial \delta v^{*,\rho,l,k-1}_j}{\partial x_m}\left(\frac{\partial v^{*,\rho,l,k-1}_m}{\partial x_j}+\frac{\partial v^{*,\rho,l,k-2}_m}{\partial x_j}\right).
\end{array}
\end{equation}
The latter observation is natural if you want to extract functional increments $\delta v^{*,\rho,k,l}_i$ in order to obtain local contraction results.
The integrals in the representation in (\ref{solrepk}) are convolutions with respect to space and time. It is clear that it is convenient to estimate time and space in Hilbert spaces of the same type (,i.e, $H^m\left((l-1,l)\times {\mathbb R}^n \right) $-norms, because we may then apply tools such as Young's inequality to space and time simultaneously.  Later, we consider $C^0\left( (l-1,l), H^m\right)$- estimates, where we shall see that we can treat time variables and spatial variables differently. Note that we can treat time and space variables separately due to applications of Fubbini's theorem. For this reason the latter type of estimates is not more difficult than the former type.
\subsection{$L^2$-estimates}
We shall use Young's inequality.
 Let $g\in L^1\left([a,a+1]\times {\mathbb R}^n \right) $ and $f\in L^2\left([a,t]\times {\mathbb R}^n \right)$ for some $a+1\geq t> a\geq 0$. It is convenient we also consider trivial extensions of these functions which are zero outside a compact time interval.  Basically we use the observation (known as the standard form of Young's inequality) that
\begin{equation}
\begin{array}{ll}
{\big |}\int_{a}^t\int_{{\mathbb R}^n}f(s,y)g(t-s,x-y)dyds{\big |}_{L^2\times L^2}\\
\\
={\big |}\int_{a}^t\int_{{\mathbb R}^n}f(t-s,x-y)g(s,y)dyds{\big |}_{L^2\times L^2}\\
\\
\leq {\big |}\int_{a}^{a+1}\int_{{\mathbb R}^n}|f^{-s,-y}(t,x)||g(s,y)|dyds{\big |}_{L^2\times L^2}\\
\\
\leq |f|_{L^2\times L^2}|g|_{L^1\times L^1}ds
\end{array}
\end{equation}
where $f^{-s,-y}(x):=f(t-s,x-y)$ denotes the function $f$ shifted by $-s$ and $-y$, and we may use Minkowski's inequality. The functions considered are local with respect to time but we may consider $ s\rightarrow  |f(s,.)|_{L^p},~t\rightarrow |g^{-s}(t,.)|_{L^1}$ as functions in $L^1\left( {\mathbb R}\right)$ by defining them to be zero in ${\mathbb R}\setminus [a,a+1]$. 

 Now this type of Young inequalities may not be applied immediately in our situation, where one part of the convolution is a Gaussian.
The application of Fourier transforms of fundamental solutions of the heat equation with respect to space {\it and} time variables shows this. For the partial (not normed) Fourier transformation with respect to the spatial variables we get for $t>s$
\begin{equation}
  \begin{array}{ll}
\int_{{\mathbb R}^n}\exp(2\pi i\xi z){\big |}\frac{1}{(2\sqrt{\epsilon \pi (t-s)})^n}\exp\left(-\frac{z^2}{4\epsilon (t-s) } \right){\big |}dz\\
\\
=\exp\left(-4\epsilon(t-s)\pi^2\xi^2\right). 
\end{array}
\end{equation}
Now as $s\uparrow t$ the right side of this equation becomes $1$ (the formal Fourier transform of the $\delta$-distribution), and this is not in $L^1$. However, for the truncated fundamental solution
\begin{equation}
\phi_1(z)G_{\epsilon}(t-s,z):=\phi_1(z)
\frac{1}{(2\sqrt{\epsilon \pi (t-s)})^n}
\exp\left(-\frac{z^2}{4\epsilon (t-s)}\right)  
\end{equation}
the situation is different. Here, let $\phi_1\in C^{\infty}\left(B_1(0))\right)$, i.e., with support in $B_1(0)$, and such that $\phi_1(x)=1$ for $|x|\leq 0.5$. Note that $\phi_1$ and $1-\phi_1$ build a smooth partition of unity on ${\mathbb R}^n$. So the idea for estimating the increments $\delta v^{*,\rho,l,k}_i$ is to split up the integral of their representation and estimate one convolution summand with factor
\begin{equation}
G^B_{\epsilon}(t-s,z):=\phi_1(z)G_{\epsilon}(t-s,z)
\end{equation}
via the Young inequality and the other convolution summand with factor
\begin{equation}
G^{(1-B)}_{\epsilon}(t-s,z):=(1-\phi_1(z))G_{\epsilon}(t-s,z)
\end{equation}
via a weighted convolution estimate for $L^2$ functions. The following estimate holds for the Gaussian itself, and a fortiori for the truncated Gaussian. We note that we have it for localized Gaussian, because we need it in this case. Indeed for the truncated Gaussian $\phi_1(z)G_{\epsilon}(t-s,z)$ we have for $t\neq s$ and $x\neq y$
 \begin{equation}\label{simpleest}
  \begin{array}{ll}
{\big |}\phi_1(x-y)G_{\epsilon}(t-s,x-y){\big |}={\big |}\phi_1(x-y)\frac{1}{(2\sqrt{\epsilon \pi (t-s)})^n}\exp\left(-\frac{(x-y)^2}{4\epsilon (t-s) } \right){\big |}\\
\\
={\big |}\phi_1(x-y)(t-s)^{m-n/2}\frac{1}{(x-y)^{2m}}\left( \frac{(x-y)^2}{ (t-s)}\right)^m \frac{1}{(2\sqrt{\epsilon \pi })^n}\exp\left(-\frac{(x-y)^2}{4\epsilon (t-s) } \right){\big |}\\
\\
\leq {\big |}\phi_1(x-y)(t-s)^{m-n/2}\frac{1}{(x-y)^{2m}}\left( \frac{(x-y)^2}{ (t-s)}\right)^m \frac{1}{(2\sqrt{\epsilon \pi })^n}\exp\left(-\frac{(x-y)^2}{4\epsilon (t-s) } \right){\big |}\\
\\
\leq {\big |}\phi_1(x-y)C(t-s)^{m-n/2}\frac{1}{(x-y)^{2m}} {\big |},
\end{array}
\end{equation}
where for the local estimates considered here (fixed time step $l$) we consider a constant
\begin{equation}
C:=\sup_{|z|\geq 0,t>0}{\big |}\frac{1}{(2\sqrt{\epsilon \pi })^n}\left( \frac{z^2}{ t}\right)^m\exp\left(-\frac{z^2}{4\epsilon t } \right){\big |}>0
\end{equation}
depending on $\epsilon$, but finite for each $\epsilon >0$. (The dependence on $\epsilon $ of order $\frac{1}{\sqrt{\epsilon}^n}$ for $z=0$ does no harm for our estimate anyway since we use integral norms for the Gaussian $G_l$ and its first spatial derivative for $n=3$ which gives a factor of the Lebesgues measure which cancels the singularity $\frac{1}{\sqrt{\epsilon}^n}$ as $\epsilon$ becomes small (transformed coordinates).)
We may use this estimate locally for $m=1$ and $n=3$, i.e., we may use the upper bound
\begin{equation}
{\big |}\phi_1(x-y)G_{\epsilon}(t-s,x-y){\big |}\leq {\big |}\phi_1(x-y)C(t-s)^{-1/2}\frac{1}{(x-y)^{2}} {\big |}
\end{equation}
which is $L^1$ for dimension $n=3$, because of the localisation, and this may be used with Young inequalities (of standard and of mixed form). 
We start with $L^2$ estimates for 
$\delta v^{*,\rho,l,k}_i(\tau,.)= v^{*,\rho,l,k}_{i}(\tau,.)-v^{*,\rho,l,k-1}_{i}(\tau,.)$,
where we define
\begin{equation}\label{vrep}
\begin{array}{ll}
\delta v^{*,\rho,l,k}_{ i}(\tau,x)=:\delta v^{*,\rho,l,k,1}_{ i}(\tau,x)+\delta v^{*,\rho,l,k,2}_{i}(\tau,x)+\delta v^{*,\rho,l,k,3}_{ i}(\tau,x)\\
\\
:=\rho_l\int_{l-1}^{\tau}\int_{{\mathbb R}^n}\left( -\sum_{j=1}^n \delta v^{*,\rho,l,k-1}_j\frac{\partial  v^{*,\rho,l,k-1}_i}{\partial x_j}\right)(s,y)\times\\
\\
\times G_l(\tau-s;x-y)dyds\\
\\
+\rho_l\int_{l-1}^{\tau}\int_{{\mathbb R}^n}\left( -\sum_{j=1}^n  v^{*,\rho,l,k-1}_j\frac{\partial  \delta v^{*,\rho,l,k-1}_i}{\partial x_j}\right)(s,y)\times\\
\\
\times G_l(\tau-s;x-y)dyds\\
\\
+\rho_l\int_{l-1}^{\tau}\int_{{\mathbb R}^n}\int_{{\mathbb R}^n}\left( \frac{\partial}{\partial x_i}K_n(z-y)\right)\times\\
\\
\times \left( \sum_{m,j=1}^n \frac{\partial \delta v^{*,\rho,l,k-1}_j}{\partial x_m}\left(\frac{\partial v^{*,\rho,l,k-1}_m}{\partial x_j}+\frac{\partial v^{*,\rho,l,k-2}_m}{\partial x_j}\right)\right)  (s,y)\times\\
\\
\times G_l(\tau-s,x-z)dydzds.
\end{array}
\end{equation}
Here, the right side of the first definition in (\ref{vrep}) is defined by the right side of the second definition with a correspondence which is summand by summand.
Furthermore, note that we have convolutions with respect to time and with respect to the spatial variables. We do some estimates for the $L^2\times L^2$-norm and for the squared $|.|_{L^2\times L^2}$-norm. For fixed $x\in {\mathbb R}^n$ we may consider $\tau\rightarrow \delta v^{*,\rho,l,k,1}_{\epsilon i}(\tau,x)=1_{[l-1,l]}\delta v^{*,\rho,l,k,1}_{\epsilon i}(\tau,x)$ as a $L^p$-function for $p\geq 1$ on ${\mathbb R}$ where $1_{[l-1,l]}$ is the function which equals $1$ on the interval $[l-1,l]$ and is $0$ on ${\mathbb R}\setminus [l-1,l]$. We implicitly assume this for convenience without changing the symbol of the function. We now apply a Young inequality to the first term on the right side of (\ref{vrep}). In the following we write 
\begin{equation}
G^B_l=\phi_{1}G_l, \mbox{ and }G^{1-B}_l=(1-\phi_{1})G_l,
\end{equation}
where $\phi_1=\phi_{\epsilon}$ with $\epsilon=1$.
For the first term $\delta v^{*,\rho,l,k,1}_{B i}$ it suffices to use the assumption
\begin{equation}\label{smallass}
\max_{1\leq j\leq n}\sup_{s\in [l-1,l],y\in {\mathbb R}^3}{\Big |}\frac{\partial  v^{*,\rho,l,k-1}_i}{\partial x_j} (s,y){\Big |}\leq C^l_{k-1}.
\end{equation}
We have
\begin{equation}
\begin{array}{ll}
{\big |}\delta v^{*,\rho,l,k,1}_{B i}(\tau,.){\big |}^2_{L^2\times L^2}:=\\
\\
={\big |}\rho_l\int_{l-1}^{\tau}\int_{{\mathbb R}^n}\left( -\sum_{j=1}^n \delta v^{*,\rho,l,k-1}_j\frac{\partial  v^{*,\rho,l,k-1}_i}{\partial x_j}\right)(s,y)\times\\
\\
\times G^B_l(\tau-s;.-y)dyds{\big |}^2_{L^2\times L^2}\\
\\
\leq {\big |}\rho_l\int_{l-1}^{\tau}\int_{{\mathbb R}^n}{\big |}\left( \sum_{j=1}^n \delta v^{*,\rho,l,k-1}_j\frac{\partial  v^{*,\rho,l,k-1}_i}{\partial x_j}\right) (s,y){\big |}\times\\
\\
\times {\big |}G^B_l(\tau-s;.-y){\big |}dyds{\big |}^2_{L^2\times L^2}\\
\\
\leq {\big |}\rho_lnC^l_{k-1}\int_{l-1}^{\tau}\int_{{\mathbb R}^n}{\big |}\left( \sum_{j=1}^n |\delta v^{*,\rho,l,k-1}_j|\right) (s,y){\big |}\times\\
\\
\times {\big |}G^B_l(\tau-s;.-y){\big |}dyds{\big |}^2_{L^2\times L^2}\\
\\
\leq \rho_l^2(C^l_{k-1})^2n^2\max_{j\in \left\lbrace 1,\cdots ,n\right\rbrace } 
{\big |} \delta v^{*,\rho,l,k-1}_j{\big |}^2_{L^2\times L^2}
{\big |}G^B_l{\big |}^2_{L^1\times L^1}\\
\\
\leq \rho_l^2(C^l_{k-1})^2n^2(C^B_{G})^2\max_{j\in \left\lbrace 1,\cdots ,n\right\rbrace } |\delta v^{*,\rho,l,k-1}_j{\big |}^2_{L^2\times L^2},
\end{array}
\end{equation}
where $D^{\alpha}_x$ denotes the multivariate partial derivative of order $\alpha$ with multiindex $\alpha=(\alpha_1,\alpha_2,\alpha_3)$. Furthermore, we used the notation
\begin{equation}
C^B_{G}:={\big |}G^B_l{\big |}_{L^1\times L^1}.
\end{equation}
An analogous argument leads to
\begin{equation}
\begin{array}{ll}
{\big |}\delta v^{*,\rho,l,k,1}_{(1-B) i}{\big |}^2_{L^2\times L^2}:=\\
\\
={\big |}\rho_l\int_{l-1}^{\tau}\int_{{\mathbb R}^n}\left( -\sum_{j=1}^n \delta v^{*,\rho,l,k-1}_j\frac{\partial  v^{*,\rho,l,k-1}_i}{\partial x_j}\right)(s,y)\times\\
\\
G^{(1-B)}_l(\tau-s;.-y)dyds{\big |}^2_{L^2\times L^2}\\
\\
\leq \rho_l^2(C^l_{k-1})^2n^2(C^{1-B}_{G})^2\max_{j\in \left\lbrace 1,\cdots ,n\right\rbrace }  |\delta v^{*,\rho,l,k-1}_j(\tau,.){\big |}^2_{L^2\times L^2},
\end{array}
\end{equation}
where
\begin{equation}
C^{1-B}_{G}:={\big |}G^{1-B}_l{\big |}_{L^1\times L^1}.
\end{equation}
Summing up our result for the first term on the right side of (\ref{vrep}) we have for all $1\leq i\leq n$ 
\begin{equation}
\begin{array}{ll}
{\big |}\delta v^{*,\rho,l,k,1}_{ i}{\big |}^2_{L^2\times L^2}\leq \\
\\
\leq \rho_l^2(C^l_{k-1})^2n^2\left(C^B_G+ C^{1-B}_{G}\right)^2 \times\\
\\
\times \max_{j\in \left\lbrace 1,\cdots ,n\right\rbrace }  |\delta v^{*,\rho,l,k-1}_j(\tau,.){\big |}^2_{L^2}{\big |}G_l{\big |}_{L^1\times L^1}\\
\\
\leq \rho_l^2(C^l_{k-1})^2n^2C_G^2 \max_{j\in \left\lbrace 1,\cdots ,n\right\rbrace }  |\delta v^{*,\rho,l,k-1}_j{\big |}^2_{L^2\times L^2},
\end{array}
\end{equation}
where
\begin{equation}
C_{G}=C^B_G+ C^{1-B}_{G},
\end{equation}
and we used $(C^B_G)^2+(C^{(1-B)}_G)^2\leq \left(C^B_G+C^{(1-B)}_G\right)^2$.
The latter estimate holds for all  $1\leq i\leq n$, hence we have
\begin{equation}
\begin{array}{ll}
\max_{j\in \left\lbrace 1,\cdots ,n\right\rbrace } {\big |}\delta v^{*,\rho,l,k,1}_{ i}{\big |}^2_{L^2\times L^2}\leq \\
\\
\leq \rho_l^2(C^l_{k-1})^2n^2C_G^2 \max_{j\in \left\lbrace 1,\cdots ,n\right\rbrace } |\delta v^{*,\rho,l,k-1}_j(\tau,.){\big |}^2_{L^2\times L^2}.
\end{array}
\end{equation}
For the second term on the right side of (\ref{vrep}), i.e., for $\delta v^{*,\rho,l,k,2}_{ i}$ we get
the analogous estimate
\begin{equation}
\begin{array}{ll}
\max_{j\in \left\lbrace 1,\cdots ,n\right\rbrace } {\big |}\delta v^{*,\rho,l,k,2}_{ i}(\tau,){\big |}^2_{L^2}\leq \\
\\
\leq \rho_l^2(C^l_{k-1})^2n^2C_G^2 \max_{j\in \left\lbrace 1,\cdots ,n\right\rbrace }  | \frac{\partial}{\partial x_k}\delta v^{*,\rho,l,k-1}_j(\tau,.){\big |}^2_{L^2},
\end{array}
\end{equation}

Finally we look at the third term on the right side of (\ref{vrep}) which is a double convolution with respect to the spatial variables integrated over time. Here the local integrability of the first order derivatives of the Laplacian kernel in dimension $n=3$ is a further reason to specify to this dimension. Again we split up the function
\begin{equation}
\delta v^{*,\rho,l,k,3}_{ i}(\tau,.)=\delta v^{*,\rho,l,k,3}_{B i}(\tau,.)+\delta v^{*,\rho,l,k,3}_{(1-B) i}(\tau,.),
\end{equation}
corresponding to summands with a truncated Gaussian and its complement as above.
Each of this summand is again split up into two summands one of which corresponds to a truncated Laplacian kernel $\phi_1K_{,i}$ (more precisely: its $i$th partial derivative), and the complement $(1-\phi_1)K_{,i}$. We define
\begin{equation}
\delta v^{*,\rho,l,k,3}_{B i}(\tau,.)=\delta v^{*,\rho,l,k,3}_{BB i}(\tau,.)+\delta v^{*,\rho,l,k,3}_{B(1-B) i}(\tau,.)
\end{equation}
and
\begin{equation}
\delta v^{*,\rho,l,k,3}_{(1-B) i}(\tau,.)=\delta v^{*,\rho,l,k,3}_{(1-B)B i}(\tau,.)+\delta v^{*,\rho,l,k,3}_{(1-B)B(1-B) i}(\tau,.),
\end{equation}
where the references will be made precise in the following estimations. We start with the third term $\delta v^{*,\rho,l,k,3}_{BB i}$ which is defined via
\begin{equation}
\begin{array}{ll}
\delta v^{*,\rho,l,k,3}_{BB i}(\tau,.)

=\rho_l\int_{l-1}^{\tau}\int_{{\mathbb R}^n}\int_{{\mathbb R}^n}\left( \phi_1(z-y)\frac{\partial}{\partial x_i}K_n(z-y)\right)\times\\
\\
\times \left( \sum_{m,j=1}^n \frac{\partial \delta v^{*,\rho,l,k-1}_j}{\partial x_m}\left(\frac{\partial v^{*,\rho,l,k-1}_m}{\partial x_j}+\frac{\partial v^{*,\rho,l,k-2}_m}{\partial x_j}\right)\right)  (s,y)\times \\
\\
\times G^B_l(\tau-s,x-z)dydzds\\ 
\end{array}
\end{equation}
This means that we have a truncated kernel $\phi_1(.)K_{,i}(.)$ and a truncated Gaussian $G^B_l$. The double subscript $B$ indicates that we have bounded support in both cases. Recall that we set $\delta v^{*,\rho,l,k,3}_{BB i}(\tau,.)=0$ for $\tau\in {\mathbb R}\setminus [l-1,l]$ in order to write the time integrals conveniently.
We obtain
\begin{equation}\label{vrepineq3}
\begin{array}{ll}
|\delta v^{*,\rho,l,k,3}_{BB i}|^2_{L^2\times L^2}
=\int_{l-1}^l\int_{{\mathbb R}^n}|\delta v^{*,\rho,l,k,3}_{ BBi}(\tau,x)|^2dxd\tau\\
\\
=\int_{l-1}^l\int_{{\mathbb R}^n}
{\big |}\rho_l\int_{{\mathbb R}^n}\int_{{\mathbb R}^n}\left(  \phi_1(z-y)\frac{\partial}{\partial x_i}K_n(z-y)\right) \times\\
\\
\times \left( \sum_{m,j=1}^n \frac{\partial \delta v^{*,\rho,l,k-1}_j}{\partial x_m}\left(\frac{\partial v^{*,\rho,l,k-1}_m}{\partial x_j}+\frac{\partial v^{*,\rho,l,k-2}_m}{\partial x_j}\right)\right)  (s,y)\times \\
\\
\times G^B_l(\tau-s,x-z)dydz{\big |}^2dxd\tau\\
\\
\leq \int_{l-1}^l\int_{{\mathbb R}^n}
{\big |}\rho_l\int_{{\mathbb R}^n}{\big |}\int_{{\mathbb R}^n}{\big |} \phi_1(z-y)\frac{\partial}{\partial x_i}K_n(z-y){\big |}\times\\
\\
\times \left( \sum_{m,j=1}^n {\big |}\frac{\partial \delta v^{*,\rho,l,k-1}_j}{\partial x_m}{\big |}{\big |}\frac{\partial v^{*,\rho,l,k-1}_m}{\partial x_j}+\frac{\partial v^{*,\rho,l,k-2}_m}{\partial x_j}{\big |}\right)  (s,y){\big |}dy{\big |}\times \\
\\
{\big |}G^{B}_l(\tau-s,x-z){\big |}dz{\big |}^2dxd\tau\\
\\
\leq {\Big |}\rho_l\int_{{\mathbb R}^n}{\big |}\int_{{\mathbb R}^n}{\big |} \phi_1(z-y)\frac{\partial}{\partial x_i}K_n(z-y){\big |}\times\\
\\
\times \left( \sum_{m,j=1}^n {\big |}\frac{\partial \delta v^{*,\rho,l,k-1}_j}{\partial x_m}{\big |}2C^l_k\right)  (s,y){\big |}dy{\big |} {\big |}G^{B}_l(.-s,x-z){\big |}dz{\Big |}^2_{L^2\times L^2}\\
\\
\leq 2n^2(C^B_G)^2(C^l_k)^2\max_{1\leq m,j\leq n}{\Big |}\rho_l\int_{{\mathbb R}^n}{\big |}\int_{{\mathbb R}^n}{\big |} \phi_1(.-y)K_{n,i}(.-y){\big |}\times\\
\\
\times  {\big |} \delta v^{*,\rho,l,k-1}_{j,m}{\big |}  (.,y){\big |}dy{\big |}^2_{L^2\times L^2},

\end{array}
\end{equation}
where we may use the constant $C_G$ above, and where we still used only the assumption (\ref{smallass}) above. Since the function $\phi_1(.)\frac{\partial}{\partial x_i}K_n(.)$ is in $L^1$ we may apply the Young inequality again and obtain
\begin{equation}\label{vrepineq4}
\begin{array}{ll}
|\delta v^{*,\rho,l,k,3}_{BB i}(\tau,.)|^2_{L^2\times L^2}\\
\\
\rho_l^24(C^l_{k-1})^2(C^B_G)^2C_{K_3\phi_1}^2 
 \max_{m,j\in \left\lbrace 1,\cdots ,n\right\rbrace } {\big |}
\frac{\partial \delta v^{*,\rho,l,k-1}_j}{\partial x_m}{\big |}^2_{L^2\times L^2},
\end{array}
\end{equation}
where
\begin{equation}
C^B_G={\big |}G^B_l{\big |}_{L^1\times L^1}
\end{equation}
and
\begin{equation}
{\big |}\phi_{B_{1}(0)}\frac{\partial}{\partial x_i}K_n(.){\big |}_{L^1}\leq C_{K_3\phi_1}
\end{equation} 
for a finite constant $C_{K_3\phi_1}$.

Next for the second summand of $\delta v^{*,\rho,l,k,3}_{i}$, i.e., for the summand of the form $\delta v^{*,\rho,l,k,3}_{B(1-B) i}$ we have by an analogous argument 
\begin{equation}\label{vrepineq4}
\begin{array}{ll}
|\delta v^{*,\rho,l,k,3}_{B(1-B) i}|^2_{L^2\times L^2}
\leq \rho_l^24(C^l_{k-1})^2(C^{(1-B)}_G)^2n^2C_{K_3\phi_1}^2\times\\
\\
\times\max_{j,m\in\left\lbrace 1,\cdots,n\right\rbrace }
{\big |}\frac{\partial \delta v^{*,\rho,l,k-1}_j}{\partial x_m} (\tau,.){\big |}^2_{L^2}
\end{array}
\end{equation}
For the other two summands of $\delta v^{*,\rho,l,k,3}_{i}$, i.e., for the summand of the form $\delta v^{*,\rho,l,k,3}_{(1-B)B i}$ and $\delta v^{*,\rho,l,k,3}_{(1-B)(1-B) i}$ we have to estimate kernels of the form 
\begin{equation}
(1-\phi_1)K_{,i}
\end{equation} 
 while the treatment of the convolution with the Gaussian maintains. We follow the argument above as long as we can and get
\begin{equation}\label{vrepineq34}
\begin{array}{ll}
|\delta v^{*,\rho,l,k,3}_{(1-B)B i}|^2_{L^2\times L^2}\\
\\
\leq 2n^2(C^B_G)^2\max_{1\leq m,j\leq n}{\Big |}\rho_l\int_{{\mathbb R}^n}{\big |}\int_{{\mathbb R}^n}{\big |}(1- \phi_1(.-y))K_{n,i}(.-y){\big |}\times\\
\\
\times  {\big |} \delta v^{*,\rho,l,k-1}_{j,m}{\big |}{\big |}\frac{\partial v^{*,\rho,l,k-1}_m}{\partial x_j}+\frac{\partial v^{*,\rho,l,k-2}_m}{\partial x_j}{\big |}  (.,y){\big |}dy{\big |}^2_{L^2\times L^2}.
\end{array}
\end{equation}
Now we do {\it not } have $(1-\phi_1)(.)\frac{\partial}{\partial x_i}K_n(.)\in L^1$, and for this reason we introduced the constant $C^l_{k-1}$ in the for above. This means that we can give another factor $\frac{1}{1+|y|^{2-\alpha}}$ to the convolution. We get
\begin{equation}\label{vrepineq345}
\begin{array}{ll}
|\delta v^{*,\rho,l,k,3}_{(1-B)B i}|^2_{L^2\times L^2}\\
\\
\leq 2n^2(C^B_G)^2\max_{1\leq m,j\leq n}{\Big |}\rho_l\int_{{\mathbb R}^n}{\big |}\int_{{\mathbb R}^n}{\big |}(1- \phi_1(.-y))K_{n,i}(.-y){\big |}\times\\
\\
\times \frac{1}{1+|y|^{1-\alpha}}C^l_{k-1}{\big |}\frac{\partial v^{*,\rho,l,k-1}_m}{\partial x_j}+\frac{\partial v^{*,\rho,l,k-2}_m}{\partial x_j}{\big |}  (.,y){\big |}dy{\big |}^2_{L^2\times L^2},
\end{array}
\end{equation}
now with the constant $C^l_{k-1}$ and some $\alpha\in (0,0.5)$. Now we can apply the Young inequality and it follows that
\begin{equation}\label{vrepineq456}
\begin{array}{ll}
|\delta v^{*,\rho,l,k,3}_{(1-B)B i}|^2_{L^2\times L^2}
\leq \rho_l^24(C^l_{k-1})^2(C^{(1-B)}_G)^2n^2C_{K_3(1-\phi_1)}^2\times\\
\\
\times\max_{j,m\in\left\lbrace 1,\cdots,n\right\rbrace }
{\big |}\frac{\partial \delta v^{*,\rho,l,k-1}_j}{\partial x_m} (\tau,.){\big |}^2_{L^2\times L^2}
\end{array}
\end{equation}
We can use the same argument to get
\begin{equation}\label{vrepineq456}
\begin{array}{ll}
|\delta v^{*,\rho,l,k,3}_{(1-B)(1-B) i}|^2_{L^2\times L^2}
\leq \rho_l^24(C^l_{k-1})^2(C^{(1-B)}_G)^2n^2C_{K_3(1-\phi_1)}^2\times\\
\\
\times\max_{j,m\in\left\lbrace 1,\cdots,n\right\rbrace }
{\big |}\frac{\partial \delta v^{*,\rho,l,k-1}_j}{\partial x_m} (\tau,.){\big |}^2_{L^2\times L^2}
\end{array}
\end{equation}
Summing up and recalling that we have first order derivatives on the right side for some summands we write a $H^1$ norm on the right side (which suffices for our purposes). We have
\begin{equation}\label{vrepineq6e}
\begin{array}{ll}
|\delta v^{*,\rho,l,k}_{i}|^2_{L^{2}\times L^2}
\leq 
\rho_l^24(C^l_{k-1})^2C_G^2n^2\times\\
\\
\times(1+\left( C_{K_3\phi_1}+C_{K_3L^2}\right) ^2(1+C_s^2)) \max_{j\in \left\lbrace 1,\cdots ,n\right\rbrace }
{\big |} \delta v^{*,\rho,l,k-1}_j {\big |}^2_{L^2\times H^1}.
\end{array}
\end{equation}
This close the argument with the assumption (\ref{inductivehyplminus1}) above. 
We note that for the scheme considered in \cite{KB2} and \cite{KNS} we would have an estimate with right side in $L^2$ which is otherwise the same as in (\ref{vrepineq6e}) up to a constant related to the fact that we have one source term less and that the Gaussian estimates for the local fundamental solutions with variable drift term and their adjoints produce different constants.
Finally we show that the assumption 
\begin{equation}
v^{*,\rho,l-1}(l-1,.)\in H^2\cap C^2
\end{equation}
is sufficient in order to get the $L^2\times L^2$-estimate above (up to a constant factor). We do this in a remark, since we do not need these considerations for the controlled global scheme above which is our major objective.
\begin{rem}
Now we may use the fact that a function $u$ is in $L^2$ if $s>\frac{n}{2}$, and for all $x\in {\mathbb R}^n$
\begin{equation}
u(x):=\int(1+|y|^2)^{-s/2}v(x-y)w(y)dy
\end{equation}
for functions $v,w\in L^2$, and such that
\begin{equation}
|u|_{L^2}\leq C_s|v|_{L^2}|w|_{L^2}.
\end{equation}
for a constant $C_s>0$. Indeed, we have 
\begin{equation}
(1-\phi_1)(y)\frac{\partial}{\partial x_i}K_n(y)\in L^2,
\end{equation}
for $n\geq 3$ where we may denote an $L^2$-bound by $K_{3L^2}$, and 
\begin{equation}
y\rightarrow {\big |}\frac{\partial \delta v^{*,\rho,l,k-1}_j}{\partial x_m}{\big |} (\tau,y)\in L^2.
\end{equation}
Hence, we obtain
\begin{equation}\label{vrepineq6}
\begin{array}{ll}
|\delta v^{*,\rho,l,k,3}_{(1-B) i}|^2_{L^{2}\times L^2}=|\delta v^{*,\rho,l,k,3}_{(1-B)B i}|^2_{L^{2}\times L^2}+|\delta v^{*,\rho,l,k,3}_{(1-B)(1-B) i}|^2_{L^{2}\times L^2}\\
\\
\leq \rho_l^24(C^l_{k-1})^2C_G^2C_{K_3L^2}^2n^2C_s^2\max_{j,m\in \left\lbrace 1,\cdots ,n\right\rbrace }
{\big |}\frac{\partial \delta v^{*,\rho,l,k-1}_j}{\partial x_m}(\tau,.){\big |}^2_{L^2}
\end{array}
\end{equation}
\end{rem}

\subsection{$L^2\times H^1$ estimates and $L^2\times H^2$-estimates}
In addition to the $L^2$-estimates we need estimates for the first order partial derivatives of the components of the value function. We start with estimates for norms which include spatial derivatives, i.e., we start with $L^2\times H^1$-estimates. At each stage $k$ of the construction we may differentiate under the integral and start with the pointwise valid expression
\begin{equation}\label{vrepH1}
\begin{array}{ll}
\frac{\partial}{\partial x_j}\delta v^{*,\rho,l,k}_{i}(\tau,x)\\
\\
=:\frac{\partial}{\partial x_j}\delta v^{*,\rho,l,k,1}_{ i}(\tau,x)+\frac{\partial}{\partial x_j}\delta v^{*,\rho,l,k,2}_{\epsilon i}(\tau,x)+\frac{\partial}{\partial x_j}\delta v^{*,\rho,l,k,3}_{ i}(\tau,x)\\
\\
:=\rho_l\int_{l-1}^{\tau}\int_{{\mathbb R}^n}\left( -\sum_{j=1}^n \delta v^{*,\rho,l,k-1}_j\frac{\partial  v^{*,\rho,l,k-1}_i}{\partial x_j}\right)(s,y)G_{l,j}(\tau-s;x-y)dyds\\
\\
+\rho_l\int_{l-1}^{\tau}\int_{{\mathbb R}^n}\left( -\sum_{j=1}^n  v^{*,\rho,l,k-1}_j\frac{\partial  \delta v^{*,\rho,l,k-1}_i}{\partial x_j}\right)(s,y)G_{l,j}(\tau-s;x-y)dyds\\
\\
+\rho_l\int_{l-1}^{\tau}\int_{{\mathbb R}^n}\int_{{\mathbb R}^n}\left( \frac{\partial}{\partial x_i}K_n(z-y)\right)\times\\
\\
\times \left( \sum_{m,j=1}^n \frac{\partial \delta v^{*,\rho,l,k-1}_j}{\partial x_m}\left(\frac{\partial v^{*,\rho,l,k-1}_m}{\partial x_j}+\frac{\partial v^{*,\rho,l,k-2}_m}{\partial x_j}\right)\right)  (s,y)\times \\
\\
\times G_{l,j}(\tau-s,x-z)dydzds,
\end{array}
\end{equation}
where the subscript $_{,j}$ denotes partial derivatives with respect to the $j$th spatial variable (as usual in Einstein notation).
From the representation in (\ref{vrepH1}) we observe that the argument for the $L^2$ estimates can be repeated, if we have a $L^1\times L^1$-bound for the first order partial derivatives of the Gaussian $G_{l}$ (first order partial derivatives with respect to the spatial variables). We have to refine the simple estimate in (\ref{simpleest}) a bit, observing that for $\alpha\in (1.5,2)$ and $n=3$ we have
 \begin{equation}\label{simpleest2}
  \begin{array}{ll}
{\big |}G_{\epsilon,i}(t-s,x-y){\big |}={\big |}\frac{1}{(2\sqrt{\epsilon \pi (t-s)})^n}\frac{-(x_i-y_i)}{2(t-s)}\exp\left(-\frac{(x-y)^2}{4\epsilon (t-s) } \right){\big |}\\
\\
\leq {\big |}(t-s)^{\alpha-1-n/2}\frac{1}{|x-y|^{2\alpha}}\left( \frac{(x-y)^2}{ (t-s)}\right)^{\alpha} \frac{1}{(2\sqrt{\epsilon \pi })^n}\exp\left(-\frac{(x-y)^2}{4\epsilon (t-s) } \right){\big |}\\
\\
\leq {\big |}(t-s)^{\alpha-1-n/2}\frac{1}{|x-y|^{2\alpha-1}}\left( \frac{(x-y)^2}{ (t-s)}\right)^m \frac{1}{(2\sqrt{\epsilon \pi })^n}\exp\left(-\frac{(x-y)^2}{4\epsilon (t-s) } \right){\big |}\\
\\
\leq {\big |}C(t-s)^{\alpha-1-n/2}\frac{1}{|x-y|^{2\alpha-1}} {\big |}.
\end{array}
\end{equation}
It follows that we have local integrability with respect to time, since $\alpha-1-n/2\in (-1,0)$ and in space since $\frac{1}{|x-y|^{2\alpha-1}}$ is locally integrable in dimension $n=3$ for $2\alpha-1\in (2,3)$. Note that we have the same constant $C$ as in the simple estimate (\ref{simpleest}) above. Hence, we have
\begin{equation}
{\big |}\phi_1(x-y)G_{\epsilon,i}(t-s,x-y){\big |}_{L^1\times L^1}\leq C_G^{B1}, 
\end{equation}
for some constant $C_G^{B1}>0$, and it is clear that
\begin{equation}
{\big |}(1-\phi_1)(x-y)G_{\epsilon,i}(t-s,x-y){\big |}_{L^1\times L^1}\leq C_G^{(1-B)1}
\end{equation}
for some constant $C_G^{(1-B)1}>0$.
Hence we may apply the same arguments as for $L^2\times $-estimates where we have to replace the constants $C_G$ for the Gaussian by $C_G^1=C^{B1}_G+C^{1(1-B)}_G$, and get 
\begin{equation}\label{vrepineq6f}
\begin{array}{ll}
|\delta v^{*,\rho,l,k}_{i}|^2_{L^{2}\times H^1}=\sum_{j=1}^3\left( |\delta v^{*,\rho,l,k,j}_{B i}|^2_{L^{2}\times H^1}+|\delta v^{*,\rho,l,k,j}_{(1-B) i}|^2_{L^{2}\times H^1}\right) \\
\\
\leq (n+1)\rho_l^24(C^l_{k-1})^2(C^1_G)^2n^2(1+(C_{K_{3\phi_1}}+K_{3L^1})^2(1+C_s^2))\times \\
\\\times \max_{j\in \left\lbrace 1,\cdots ,n\right\rbrace }
{\big |} \delta v^{*,\rho,l,k-1}_j {\big |}^2_{L^2\times H^1}
\end{array}
\end{equation}
Note that the additional factor $n+1$ takes account of the fact that we need to estimate $n+1=4$ terms with the method of $L^2$-estimates above.
For $L^2\times H^2$-estimates we use convolution rules and partial integration to get the following representation of the second order partial derivatives of thevalue function. We have
\begin{equation}\label{vrepH2second}
\begin{array}{ll}
\frac{\partial^2}{\partial x_m\partial x_j}\delta v^{*,\rho,l,k}_{i}(\tau,x)\\
\\
=:\frac{\partial^2}{\partial x_m\partial x_j}\delta v^{*,\rho,l,k,1}_{ i}(\tau,x)+\frac{\partial^2}{\partial x_m\partial x_j}\delta v^{*,\rho,l,k,2}_{\epsilon i}(\tau,x)+\frac{\partial^2}{\partial x_m\partial x_j}\delta v^{*,\rho,l,k,3}_{ i}(\tau,x)\\
\\
:=\rho_l\int_{l-1}^{\tau}\int_{{\mathbb R}^n}\left( -\sum_{j=1}^n \delta v^{*,\rho,l,k-1}_j\frac{\partial  v^{*,\rho,l,k-1}_i}{\partial x_j}\right)_{,j}(s,y)G_{l,m}(\tau-s;x-y)dyds\\
\\
+\rho_l\int_{l-1}^{\tau}\int_{{\mathbb R}^n}\left( -\sum_{j=1}^n  v^{*,\rho,l,k-1}_j\frac{\partial  \delta v^{*,\rho,l,k-1}_i}{\partial x_j}\right)_{,j}(s,y)G_{l,m}(\tau-s;x-y)dyds\\
\\
+\rho_l\int_{l-1}^{\tau}\int_{{\mathbb R}^n}\int_{{\mathbb R}^n}\left( \frac{\partial}{\partial x_i}K_n(z-y)\right)\times\\
\\
\times \left( \sum_{m,j=1}^n \frac{\partial \delta v^{*,\rho,l,k-1}_j}{\partial x_m}\left(\frac{\partial v^{*,\rho,l,k-1}_m}{\partial x_j}+\frac{\partial v^{*,\rho,l,k-2}_m}{\partial x_j}\right)\right)_{,j} (s,y)\times \\
\\
\times G_{l,m}(\tau-s,x-z)dydzds.
\end{array}
\end{equation}
Here it becomes clear why we included second derivatives in the definition of $C_{k-1}$. Proceeding as before we need to apply the product rule in order to expand the derivatives $_{,j}$ of the value functions above. This gives an additional factor $2$ at $C^l_{k-1}$. Furthermore we have $1+n+n^2$ terms that we have to estimate. Hence,
\begin{equation}\label{vrepineq6g}
\begin{array}{ll}
|\delta v^{*,\rho,l,k}_{i}|^2_{L^{2}\times H^2}=\sum_{j=1}^3\left( |\delta v^{*,\rho,l,k,j}_{B i}|^2_{L^{2}\times H^2}+|\delta v^{*,\rho,l,k,j}_{(1-B) i}|^2_{L^{2}\times H^2}\right) \\
\\
\leq (n^2+n+1)\rho_l^24(C^l_{k-1})^2(C^1_G)^2n^2(1+(C_{K_3\phi_1}+C_{K_3L^1})^2(1+C_s^2))\times \\
\\\times \max_{j\in \left\lbrace 1,\cdots ,n\right\rbrace }
{\big |} \delta v^{*,\rho,l,k-1}_j (\tau,.){\big |}^2_{L^2\times H^2}.
\end{array}
\end{equation}

\subsection{Higher order estimates}
Compared to the considerations in \cite{KB2} the argument simplifies, because we simplified the scheme avoiding the use of the adjoint and estimates of the fundamental solutions for linear equations with variable first order terms. We reconsider this argument in the context of estimates for higher order derivatives.  
In the representation of the functions $v^{*,\rho,k,l}_i,~1\leq i\leq n$ as in (\ref{solrepk}) we have three summands
\begin{equation}\label{solrepkls}
\begin{array}{ll}
\rho_l\int_{l-1}^{\tau}\int_{{\mathbb R}^n}\left( -\sum_{j=1}^n \delta v^{*,\rho,l,k-1}_j\frac{\partial  v^{*,\rho,l,k-1}_i}{\partial x_j}\right)(s,y)G_l(\tau-s;x-y)dyds,\\
\\
\rho_l\int_{l-1}^{\tau}\int_{{\mathbb R}^n}\left( -\sum_{j=1}^n  v^{*,\rho,l,k-1}_j\frac{\partial  \delta v^{*,\rho,l,k-1}_i}{\partial x_j}\right)(s,y)G_l(\tau-s;x-y)dyds,\\
\\
\rho_l\int_{l-1}^{\tau}\int_{{\mathbb R}^n}\int_{{\mathbb R}^n}\left( \frac{\partial}{\partial x_i}K_n(z-y)\right)\times\\
\\
\times \left( \sum_{m,j=1}^n \frac{\partial \delta v^{*,\rho,l,k-1}_j}{\partial x_m}\left(\frac{\partial v^{*,\rho,l,k-1}_m}{\partial x_j}+\frac{\partial v^{*,\rho,l,k-2}_m}{\partial x_j}\right)\right)  (s,y)\times \\
\\
\times G_l(\tau-s,x-z)dydzds.
\end{array}
\end{equation} 
All these summands have products of functions and derivatives of  functions of the form $\delta v^{*,\rho,l,k-1}_j, v^{*,\rho,l,k-1}_j$ known from the previous iteration step. Hence if these functions $\delta v^{*,\rho,l,k-1}_j, v^{*,\rho,l,k-1}_j$, and derivatives of these functions say of order up to $|\alpha|\leq m$ have polynomial decay of order $p$ then products have polynomial decay of order $2p$. This order of polynomial decay may be weakened by the convolution with the Gaussian or by the convolution with the Laplacian kernel in the Leray projection term, but we may expect that for $p>n$ the polynomial decay of order $p$ may be preserved by the scheme in the sense that $\delta v^{*,\rho,l,k}_j, v^{*,\rho,l,k}_j$, and derivatives of these functions say of order up to $|\alpha|\leq m$ have polynomial decay of order $p$. In the next lemma we analyze this. We shall see below that the assumptions of the following lemma are satisfied for the controlled scheme $v^{r,*,\rho,l,k}_i$. 

\begin{lem} Let $\tau\in[l-1,l]$ for some time step number $l\geq 1$. Assume that for $1\leq i\leq n$ the functions $v^{*,\rho,l-1}_i(l-1,.)$ and $\delta v^{*,\rho,l,1}(\tau,.)$ for $\tau\in [l-1,l]$ and derivatives of these functions up to order $m=2$ have polynomial decay of order $m\geq 2$. Then for all $1 \leq n$ and all $k\geq 1$ the functions $v^{*,\rho,k,l}_i,~1\leq i\leq n$ and $v^{*,\rho,k,l}_i,~1\leq i\leq n$ and their derivatives up to order $m\geq 2$ are of polynomial decay of order $m$. Especially, $v^{*,\rho,l,k+1}(\tau,.)\in C^m\cap H^m$, and $\delta v^{*,\rho,l,k+1}(\tau,.)\in C^m\cap H^m$ for all $\tau \in [l-1,l]$.
 \end{lem}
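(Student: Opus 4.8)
The plan is to induct on the substep index $k$, reducing the statement about the value functions to one about the increments, which are controlled by the preceding lemma. First I would record the base case: since $v^{*,\rho,l,1}_i=v^{*,\rho,l-1}_i(l-1,.)+\delta v^{*,\rho,l,1}_i$ is a sum of two functions which by hypothesis have polynomial decay of order $m$ for derivatives up to order $m$, and since this class is a vector space, both $v^{*,\rho,l,1}_i$ and $\delta v^{*,\rho,l,1}_i$ lie in it. For the value function at a general substep I would then use the telescoping identity $v^{*,\rho,l,k}_i=v^{*,\rho,l-1}_i(l-1,.)+\sum_{p=1}^{k}\delta v^{*,\rho,l,p}_i$: a finite sum of functions of polynomial decay of order $m$ again has polynomial decay of order $m$, so it suffices to establish the property for the increments $\delta v^{*,\rho,l,k}_i$, $k\ge2$, which is exactly the content inherited from the preceding lemma applied with the base data just verified.

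To make the inductive step self-contained I would estimate the representation (\ref{solrepk}) directly, assuming $v^{*,\rho,l,k-1}_i$, $v^{*,\rho,l,k-2}_i$, $\delta v^{*,\rho,l,k-1}_i$ and their derivatives up to order $m$ have polynomial decay of order $m$. Because the spaces of functions of polynomial decay of a fixed order form an algebra, every integrand in (\ref{solrepk}) — the two convective products and the quadratic Leray integrand, together with their spatial derivatives up to order $m$ — has polynomial decay of order $2m$. The remaining task is to show that the single convolution against $G_l$ (first two terms) and the double convolution against $\partial_{x_i}K_n$ and $G_l$ (Leray term) lose no essential order of decay; the analogous estimate for the value function itself comes from the representation (\ref{Navleray*solrep}).

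For this I would repeat the splitting of the preceding proof: write $G_l=\phi_B G_l+(1-\phi_B)G_l$ and $\partial_{x_i}K_n=\phi_B\partial_{x_i}K_n+(1-\phi_B)\partial_{x_i}K_n$. On the smooth nonlocal pieces the integrands decay rapidly and no singularity is present, so a two-region split of the spatial integral ($\{|y|\le|x|/2\}$ and its complement) preserves the polynomial decay. On the singular but compactly supported pieces I would use the truncated Gaussian bound (\ref{simpleest}) with exponent $m=1$ together with the local integrability of $\partial_{x_i}K_n\sim y_i|y|^{-n}$ in dimension $n=3$, noting that on the support $|x-y|\le1$ one has $|y|\ge|x|-1$, so for large $|x|$ the order-$2m$ factor in the integrand is bounded by $(1+|x|)^{-2m}$ while the residual $y$- and $s$-integrals converge (the time singularity $(\tau-s)^{-1/2}$ being integrable). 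Spatial derivatives up to order $m$ are handled by differentiating under the integral and moving the derivative either onto the Gaussian, controlled by the sharpened estimate (\ref{simpleest2}) for $G_{l,i}$ with $\alpha\in(3/2,2)$, or, by partial integration, onto the product factors, which retain the required decay by the inductive hypothesis and the algebra property.

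Summing the finitely many terms then yields polynomial decay of the prescribed order for $D^{\alpha}_x\delta v^{*,\rho,l,k}_i$ and $D^{\alpha}_xv^{*,\rho,l,k}_i$ for all $|\alpha|\le m$, and since polynomial decay of order $m$ with $2m>n$ (hence $m\ge2$ for $n=3$) puts the function and all its derivatives up to order $m$ in $L^2$, the membership $v^{*,\rho,l,k+1}(\tau,.),\delta v^{*,\rho,l,k+1}(\tau,.)\in C^m\cap H^m$ follows for every $\tau\in[l-1,l]$. I expect the main obstacle to be the bookkeeping in the double-convolution Leray term: unlike the Gaussian, the kernel $\partial_{x_i}K_n$ carries only the intrinsic decay of order $n-1$, so the region $\{|y|\le|x|/2\}$ contributes a tail of that order no matter how fast the data decays. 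This is precisely where the restriction $n=3$ enters, since there $n-1=2$ matches the floor needed for the $L^2$-conclusion, and it is the step that must be tracked carefully to guarantee that the order surviving both convolutions never drops below what is required.
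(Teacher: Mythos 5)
Your skeleton is essentially the paper's: induction on the substep index $k$, reduction of the value functions to the increments via the telescoping sum, the algebra property giving decay of order $2m$ for the quadratic integrands, a near/far splitting of the spatial integral at $|y|=|x|/2$, truncated-kernel bounds of the type (\ref{simpleest}), and shifting of spatial derivatives between the integrands and one derivative of the Gaussian. The one point where you genuinely diverge is the Leray double convolution, and that is the step that decides everything. On the region $\left\lbrace |z|\le |y|/2\right\rbrace$ the paper moves $2m-2$ derivatives onto the kernel — it replaces the integrand by $\frac{\partial^{2m-2}}{\partial x_i^{2m-2}}K_{,i}(y-z)\cdot\frac{C}{z^{2}}$ — and concludes a tail of order $2m-n$, which is why its conclusion carries the restriction $m>n$ (already inconsistent with the lemma's stated range $m\ge 2$). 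You instead take the kernel at face value and cap the tail at order $n-1$. Your cap is the correct generic estimate: the paper's integration by parts tacitly assumes that the iterated antiderivatives of the quadratic integrand $g$ keep decaying, which fails whenever $\int g\,dz\ne 0$; the iterates $v^{*,\rho,l,k}_i$ are not divergence free, so no moment cancellation rescues this, and $\partial_i K_n\ast g$ decays exactly like $|z|^{-(n-1)}$ no matter how fast $g$ decays.

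The consequence — which you flag in your last paragraph but do not resolve — is that this method propagates decay of order $\min(m,n-1)$, not $m$. For $n=3$ that is order $2$: enough for the membership $v^{*,\rho,l,k}(\tau,.),\,\delta v^{*,\rho,l,k}(\tau,.)\in C^m\cap H^m$ (decay of order $2$ of all derivatives up to order $m$ is square integrable over ${\mathbb R}^3$), and it proves the decay assertion exactly in the case $m=2$, which is the case the companion lemma of Section 6 actually computes. But for $m>2$ your induction hypothesis "decay of order $m$ for derivatives up to order $m$" cannot be recovered after one pass through the Leray term, so the literal statement of the lemma is not established — by you or by the paper. The gap sits in the lemma itself rather than in your strategy; closing it for $m>2$ would require a structural input (for instance exhibiting the quadratic term as a double divergence with vanishing moments, as holds for exactly divergence-free fields), not finer bookkeeping of the convolutions.
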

 \begin{proof}
We show that for $m\geq 1$ and $0\leq |\alpha|\leq 2$ we have
 \begin{equation}
 |D^{\alpha}_x\delta v^{*,\rho, l,k+1}_i|\leq \frac{1}{|x|^{m}},\mbox{ if }|x|\geq 1
 \end{equation}
if this holds for $|D^{\alpha}_x\delta v^{*,\rho, l,k}_i|$ and for $D^{\alpha}_xv^{*,\rho,l,k-1}_i$. Similarly for higher order derivatives $D^{\alpha}_xv^{*,\rho, l, k+1}_i$ with $|\alpha|>2$ and some $\rho_l>0$, where $D^{\alpha}_x=D^{\alpha_1}_xD^{\alpha_2}_x\cdots D^{\alpha_n}_x$ denotes the multivariate partial derivative with respect to the multiindex $\alpha=(\alpha_1,\cdots ,\alpha_n)$.
We have to estimate convolutions with the Gaussian $G_{\epsilon}$ for some $\epsilon>0$. The expressions for $\delta v^{*,\rho_l,k+1,l,k+1}_i$ and $v^{\epsilon,k+1}_i$ and their derivatives involve terms which are essentially of the form
 \begin{equation}
 \int_{l-1}^{\tau}\int_{{\mathbb R}^3}h(y)G_{\epsilon}(t-s,x-y)dy ds,
 \end{equation}
 or
\begin{equation}
 \int_{l-1}^{\tau}\int_{{\mathbb R}^3}h(y)G_{\epsilon ,j}(t-s,x-y)dyds,
 \end{equation} 
where $h$ is some function which is a functional of $$D^{\beta}_xv^{*,\rho,l,k}(s,.) \mbox{ and } D^{\beta}_x\delta v^{*,\rho,l,k}(s,.)$$ for $0\leq |\beta|\leq m$, and where the latter functions are in $C^m\cap H^m$ and such that the functions themselves and their derivatives up to order $m$  are assumed to be of polynomial decay of order $m$ according to inductive assumption. Here you may  cf. (\ref{vrepH2second}) for second order spatial derivatives; higher order spatial derivatives can be represented similarly with first order spatial derivatives of the Gaussian. Furthermore we have a Gaussian factor of the form $G_{\epsilon}(t-s,x-y)$ (defined analogously as $G_l$ above), or first order partial derivatives of this factor. We split up the integral of the convolution into two parts where one part is the integral for $|y|\leq \frac{|x|}{2}$. On this domain
  we observe that the Gaussian has polynomial decay of any order. Indeed we have for $0<|t-s|\leq 1$, $m\geq \frac{n}{2}$, and $|y|\leq \frac{|x|}{2}>0$
 \begin{equation}\label{simple}
  \begin{array}{ll}
{\big |}G_{\epsilon}(t-s,x-y){\big |}={\big |}\frac{1}{(2\sqrt{\epsilon \pi (t-s)})^n}\exp\left(-\frac{(x-y)^2}{4\epsilon (t-s) } \right){\big |}\\
\\
={\big |}(t-s)^{m-n/2}\frac{1}{(x-y)^{2m}}\left( \frac{(x-y)^2}{(t-s)}\right)^m \frac{1}{(2\sqrt{\epsilon \pi })^n}\exp\left(-\frac{(x-y)^2}{4\epsilon (t-s) } \right){\big |}\\
\\
\leq {\big |}C(t-s)^{m-n/2}\frac{1}{(x-y)^{2m}} {\big |}\leq \frac{C'}{|x|^{2m}},
\end{array}
\end{equation}
where $C'>0$ is some constant, and where with $z=\frac{(x-y)}{\sqrt{t-s}}$ we define
\begin{equation}
C:=\sup_{z > 0}
{\big |}\frac{1}{(2\sqrt{\epsilon \pi })^n}
\left( z^2\right)^m
\exp\left(-\frac{z^2}{4\epsilon} \right){\big |}>0.
\end{equation}
Furthermore, for
the first order partial derivatives of the Gaussian we have a similar estimate, i.e., we have for $1\leq i\leq n$, and $0<|t-s|\leq 1$, $m> \frac{n}{2}$, and $|y|\leq \frac{|x|}{2}>0$
 \begin{equation}\label{simple2}
  \begin{array}{ll}
{\big |}G_{\epsilon ,i}(t-s,x-y){\big |}={\big |}\frac{1}{(2\sqrt{\epsilon \pi (t-s)})^n}\frac{-(x-y)_i}{2\epsilon (t-s)}\exp\left(-\frac{(x-y)^2}{4\epsilon (t-s) } \right){\big |}\\
\\
={\big |}(t-s)^{m-n/2-1}\frac{|x-y|}{(x-y)^{2m}}\left( \frac{(x-y)^2}{ (t-s)}\right)^m \frac{1}{2\epsilon(2\sqrt{\epsilon \pi })^n}\exp\left(-\frac{(x-y)^2}{4\epsilon (t-s) } \right){\big |}\\
\\
\leq {\big |}(t-s)^{m-n/2-1}\frac{1}{|x-y|^{2m-1}}\left( \frac{(x-y)^2}{ (t-s)}\right)^m \frac{1}{(2\epsilon 2\sqrt{\epsilon \pi })^n}\exp\left(-\frac{(x-y)^2}{4\epsilon (t-s) } \right){\big |}\\
\\
\leq {\big |}C(t-s)^{m-n/2-1}\frac{1}{|x-y|^{2m-1}} {\big |}\leq (t-s)^{m-n/2-1}\frac{C'}{|x|^{2m-1}},
\end{array}
\end{equation}
for some constant $C'>0$, and with a locally  integrable time factor which becomes nonsingular for $m\geq 3$ in the case of dimension $n=3$.
On the complementary domain $|y|>\frac{|x|}{2}$ we need some properties of the integrand $h$.
We observe for $|x|>0$ and some constants $C,C'>0$
\begin{equation}
\begin{array}{ll}
\int_{l-1}^{\tau}\int_{ \left\lbrace |y|\geq \frac{|x|}{2}\right\rbrace }\frac{C}{y^{2p}}G_{\epsilon}(t-s,x-y)dyds\\
\\
\leq \int_{l-1}^{\tau}\int_{ \left\lbrace |y|\geq \frac{|x|}{2}\right\rbrace\&\left\lbrace |x-y|\leq 1\right\rbrace }\frac{C}{y^{2p}}G_{\epsilon}(t-s,x-y)dyds\\
\\
+\int_{l-1}^{\tau}\int_{ \left\lbrace |y|\geq \frac{|x|}{2}\right\rbrace\&\left\lbrace |x-y|> 1\right\rbrace }\frac{C}{y^{2p}}G_{\epsilon}(t-s,x-y)dyds\\
\\
\leq \int_{l-1}^{\tau}\int_{\left\lbrace |y|\geq \frac{|x|}{2}\right\rbrace\& \left\lbrace |x-y|\leq 1\right\rbrace }\frac{C}{y^{2p}}{\big |}C(t-s)^{-1/2}\frac{1}{(x-y)^{2}} {\big |}dyds\\
\\
+\int_{l-1}^{\tau}\int_{\left\lbrace |y|\geq \frac{|x|}{2}\right\rbrace\& \left\lbrace |x-y|> 1\right\rbrace }\frac{C}{y^{2p}}dyds\leq \frac{C'}{|x|^{2p-n}}.
\end{array}
\end{equation}
Note if the functions $v^{*,\rho,l,k}_i, \delta v^{*,\rho,l,k}_i$ have polynomial decay of order $m$ then all terms in the source function $h$ except that Leray projection term have polynomial decay of order $2m$ hence for $m>n$ the preceding observation indicates that the polynomial decay might be preserved. Hence the proof reduces to the observation that the integrand $h$ in the form it has in the representation of $\delta v^{k+1}_i$ and their derivatives is of polynomial decay of order larger than $p+n$. Let us look at arbitrary partial derivatives of some maximal order $m$ which are assumed to be of polynomial decay of order $m$ inductively. Now from (\ref{vrep}) we get for each $1\leq j\leq n$ and $\alpha=(\alpha_1,\cdots,\alpha_i,\cdots,\alpha_n)=:\beta+1_j:=(\alpha_1,\cdots ,\beta_j+1,\cdots,\alpha_n)$  the representation 
\begin{equation}\label{vrepalpha}
\begin{array}{ll}
D^{\alpha}_x\delta v^{*,\rho,l,k}_{ i}(\tau,x)\\
\\
=:D^{\alpha}_x\delta v^{*,\rho,l,k,1}_{ i}(\tau,x)+D^{\alpha}_x\delta v^{*,\rho,l,k,2}_{i}(\tau,x)+D^{\alpha}_x\delta v^{*,\rho,l,k,3}_{ i}(\tau,x)\\
\\
:=\rho_l\int_{l-1}^{\tau}\int_{{\mathbb R}^n}\left( -\sum_{j=1}^n \delta v^{*,\rho,l,k-1}_j\frac{\partial  v^{*,\rho,l,k-1}_i}{\partial x_j}\right)_{,\beta}(s,y)G_{l,j}(\tau-s;x-y)dyds\\
\\
+\rho_l\int_{l-1}^{\tau}\int_{{\mathbb R}^n}\left( -\sum_{j=1}^n  v^{*,\rho,l,k-1}_j\frac{\partial  \delta v^{*,\rho,l,k-1}_i}{\partial x_j}\right)_{,\beta}(s,y)G_{l,j}(\tau-s;x-y)dyds\\
\\
+\rho_l\int_{l-1}^{\tau}\int_{{\mathbb R}^n}\int_{{\mathbb R}^n}\left( \frac{\partial}{\partial x_i}K_n(z-y)\right)\times\\
\\
\times \left( \sum_{m,j=1}^n \frac{\partial \delta v^{*,\rho,l,k-1}_j}{\partial x_m}\left(\frac{\partial v^{*,\rho,l,k-1}_m}{\partial x_j}+\frac{\partial v^{*,\rho,l,k-2}_m}{\partial x_j}\right)\right)_{,\beta} (s,y)\times\\
\\
\times G_{l,j}(\tau-s,x-z)dydzds,
\end{array}
\end{equation}
where the subscript $_{,\beta}$ denotes multivariate partial derivatives with respect to the multiindex $\beta$. Now concerning the first two terms in (\ref{vrepalpha}), i.e., 
$D^{\alpha}_x\delta v^{*,\rho,l,k,1}_{ i}(\tau,x)$ and
$D^{\alpha}_x\delta v^{*,\rho,l,k,2}_{i}(\tau,x)$ the respective integrands
\begin{equation}
\left( -\sum_{j=1}^n \delta v^{*,\rho,l,k-1}_j\frac{\partial  v^{*,\rho,l,k-1}_i}{\partial x_j}\right)_{,\beta},
\end{equation}
\begin{equation}
\left( -\sum_{j=1}^n  v^{*,\rho,l,k-1}_j\frac{\partial  \delta v^{*,\rho,l,k-1}_i}{\partial x_j}\right)_{,\beta}
\end{equation}
are sums of products of functions $D^{\beta}_x\delta v^{*,\rho,l,k-1}_j$ and $D^{\gamma}_x v^{*,\rho,l,k-1}_j$ of order $|\gamma|,|\beta|\leq m$, which are assumed to be of polynomial decay of order $m$. Hence all the integrands except for the integrand related to the Leray projection term are of polynomial decay of order $2m$, i.e., the argument above concerning the estimate for polynomial decay of order $m$ for convolutions with Gaussians shows that $D^{\alpha}_x\delta v^{*,\rho,l,k,1}_{ i}(\tau,x)$ and
$D^{\alpha}_x\delta v^{*,\rho,l,k,2}_{i}(\tau,x)$ are indeed of polynomial decay of order $m$ for $|\alpha|\leq m$.
It remains to check the polynomial decay of order $m$ for the term $D^{\alpha}_x\delta v^{*,\rho,l,k,3}_{ i}(\tau,x)$ for $|\alpha|\leq m$.
Note that for $|y|\leq \frac{|x|}{2}$ we can use the Gaussian polynomial decay as above.
 Hence it is sufficient to estimate integrals of the form
\begin{equation}
\int_{\left\lbrace |y|\geq \frac{|x|}{2}\right\rbrace}
K_{,i}(y-z)g(z)dz
\end{equation}
where $g$ is an integrand of the form
\begin{equation}
\left( \sum_{m,j=1}^n \frac{\partial \delta v^{*,\rho,l,k-1}_j}{\partial x_m}\left(\frac{\partial v^{*,\rho,l,k-1}_m}{\partial x_j}+\frac{\partial v^{*,\rho,l,k-2}_m}{\partial x_j}\right)\right)_{,\beta} 
\end{equation}
with $|\beta|\leq m-1$. Note that $g$ is again a functional determined by sums of products of functions  $D^{\beta}_x\delta v^{*,\rho,l,k-1}_j$ and $D^{\gamma}_x v^{*,\rho,l,k-1}_j$ of order $|\gamma|,|\beta|\leq m$ such that $g$ is of polynomial decay of order $m$ by assumption, i.e.,
we have 
\begin{equation}
|g(z)|\leq \frac{C}{z^{2m}}\mbox{ for } |z|\geq \frac{|x|}{4}.
\end{equation}
We can use a similar argument as above and write
\begin{equation}
\begin{array}{ll}
{\big |}\int_{\left\lbrace |y|\geq \frac{|x|}{2}\right\rbrace}K_{,i}(y-z)g(z)dz{\big |}\\
\\
\leq {\big |}\int_{\left\lbrace |y|\geq \frac{|x|}{2}\right\rbrace \& \left\lbrace |z|\leq \frac{|y|}{2}\right\rbrace}K_{,i}(y-z)g(z)dz{\big |}\\
\\
+{\big |}\int_{\left\lbrace |y|\geq \frac{|x|}{2}\right\rbrace \& \left\lbrace |z|> \frac{|y|}{2}\right\rbrace}K_{,i}(y-z)g(z)dz{\big |}\\
\\
\leq {\big |}\int_{\left\lbrace |y|\geq \frac{|x|}{2}\right\rbrace \& \left\lbrace |z|\leq \frac{|y|}{2}\right\rbrace}\frac{\partial^{2m-2}}{\partial x_i^{2m-2}}K_{,i}(y-z)\frac{C}{z^2}dz{\big |}\\
\\
+{\big |}\int_{\left\lbrace |y|\geq \frac{|x|}{2}\right\rbrace \& \left\lbrace |z|> \frac{|y|}{2}\right\rbrace}K_{,i}(y-z)\frac{C}{y^{2m}}dz{\big |}\in O\left(\frac{C}{|x|^{2m-n}}. \right) 
\end{array}
\end{equation}
This shows that $D^{\alpha}_x\delta v^{*,\rho,l,k,3}_{ i}(\tau,x)$ for $|\alpha|\leq m$ and $m>n$ these functions are all of polynomial decay of order $m$, too.
\end{proof}

Next we consider the higher order estimates. Especially, we need the $H^1\times H^m$ estimate for some $m$ and with $H^1$ with respect to time. Product rules for Sobolev spaces allow us to reduce $H^1\times H^m$-estimates to  $L^2\times H^{m-2}$-estimates. Next the estimates are considered in the case $n=3$. 
First we observe that for all $k\geq 0$ and $m>\frac{5}{2}$ we have
\begin{equation}\label{seriesl1hm}
\frac{\partial v^{\rho,l,k}_i}{\partial \tau}\in L^2\times H^{m-2},
\end{equation}
and more generally for $m>\frac{5}{2}+2p$ we have
\begin{equation}
\frac{\partial^p v^{\rho,l,k}_i}{\partial \tau^p}\in L^2\times H^{m-2p}.
\end{equation}
In this context note that the local-time functions $\frac{\partial^p v^{\rho,l,k}_i}{\partial \tau^p}$ are considered to be trivially extended to the whole time as mentioned above. 
Consider the first equation of (\ref{Navleray}) and the case $k=1$. We have the representation 
\begin{equation}\label{Navlerayvkpr}
 \begin{array}{ll}
\frac{\partial v^{\rho,l,k}_i}{\partial \tau}=\rho_l\nu\sum_{j=1}^n \frac{\partial^2 v^{\rho,l,k}_i}{\partial x_j^2} 
-\rho_l\sum_{j=1}^n v^{\rho,l,k-1}_j\frac{\partial v^{\rho,l,k}_i}{\partial x_j}\\
\\ +\rho_l\sum_{j,m=1}^n\int_{{\mathbb R}^n}\left( \frac{\partial}{\partial x_i}K_n(x-y)\right) \sum_{j,m=1}^n\left( \frac{\partial v^{\rho,l,k-1}_m}{\partial x_j}\frac{\partial v^{\rho,l,k-1}_j}{\partial x_m}\right) (\tau,y)dy.
\end{array}
\end{equation}
Knowing that $v^{\rho,l,k}_i\in L^2\times H^{m}$ for the first term on the right side of (\ref{Navlerayvkpr}) we have 
\begin{equation}
\rho_l\nu\sum_{j=1}^n \frac{\partial^2 v^{\rho,l,k}_i}{\partial x_j^2}\in L^2\times H^{m-2}. 
\end{equation}
Furthermore, for $v^{\rho,l,k}_i\in L^2\times H^{m}$ we observe that for $m>\frac{5}{2}$ concerning the convection term on the right side of (\ref{Navlerayvkpr}) we have for all $\tau\in [l-1,l]$
\begin{equation}
-\rho_l\sum_{j=1}^n v^{\rho,l,k-1}_j(\tau,.)\frac{\partial v^{\rho,l,k}_i}{\partial x_j}(\tau,.)\in H^{m-1}
\end{equation}
since the factors  satisfy $v^{\rho,l,k-1}_j(\tau,.)\in H^m$ and $\frac{\partial v^{\rho,l,k}_i}{\partial x_j}(\tau,.)\in H^{m-1}$, hence the prodcut rule
\begin{equation}\label{prodproof}
|fg|_{H^s}\leq C_s|f|_{H^s}|g|_{H^s} \mbox{ for }f,g\in H^s,~s>\frac{n}{2}
\end{equation}
applies for $m>\frac{5}{2}$ in case $n=3$. From our construction we know that the right side of (\ref{Navlerayvkpr}) is locally continuous with respect to time. Hence, we have
\begin{equation}
 -\rho_l\sum_{j=1}^n v^{\rho,l,k-1}_j\frac{\partial v^{\rho,l,k}_i}{\partial x_j}\in L^2\times H^{m-1}.
\end{equation}
Finally, concerning the Leray projection term in (\ref{Navlerayvkpr}) we observe again that
locally, i.e., for $x-y\in B_r(0)$ for some ball $B_r(0)$ of radius $r>0$ we have
\begin{equation}
K_{,i}\in L^1,
\end{equation}
hence with an appropriate partition of unity, for example with $\phi_1$ defined above we have
\begin{equation}
\phi_1K_{,i}\in L^1,~\mbox{ and }(1-\phi_1)K_{,i}\in L^2
\end{equation}
 for the first order partial derivatives of the Laplacian kernel. For $m>\frac{5}{2}$ in case $n=3$ the product rule (\ref{prodproof}) can be applied to the (relevant part) of the integrand in the Leray projection term, and using the pointwise rule $ab\leq \frac{1}{2}a^2+\frac{1}{2}b^2$ in addition we conclude that
\begin{equation}
\begin{array}{ll}
\sum_{j,m=1}^n\left( \frac{\partial v^{\rho,l,k-1}_m}{\partial x_j}\frac{\partial v^{\rho,l,k-1}_j}{\partial x_m}\right)(\tau,.)\in L^1\cap H^{m-1}.
\end{array}
\end{equation}
Hence for all $\tau\in [l-1,l]$
\begin{equation}\label{NavlerayvkLeray}
 \begin{array}{ll}
\rho_l\sum_{j,m=1}^n\int_{{\mathbb R}^n}\left( \phi_1(.-y)\frac{\partial}{\partial x_i}K_n(.-y)\right) \sum_{j,m=1}^n\left( \frac{\partial v^{\rho,l,k-1}_m}{\partial x_j}\frac{\partial v^{\rho,l,k-1}_j}{\partial x_m}\right) (\tau,y)dy\\
\\
+\rho_l\sum_{j,m=1}^n\int_{{\mathbb R}^n}\left( (1-\phi_1(.-y))\frac{\partial}{\partial x_i}K_n(.-y)\right)\times\\
\\
\times \sum_{j,m=1}^n\left( \frac{\partial v^{\rho,l,k-1}_m}{\partial x_j}\frac{\partial v^{\rho,l,k-1}_j}{\partial x_m}\right) (\tau,y)dy \in H^{m-1},
\end{array}
\end{equation}
applying the product rule and different appropriate types of Young's inequality to both summands (cf. also part II of this investigation). Again continuity with respect to time leads to
\begin{equation}
\begin{array}{ll}
\rho_l\sum_{j,m=1}^n\int_{{\mathbb R}^n}\frac{\partial}{\partial x_i}K_n(.-y)\times \\
\\
\times \sum_{j,m=1}^n\left( \frac{\partial v^{\rho,l,k-1}_m}{\partial x_j}\frac{\partial v^{\rho,l,k-1}_j}{\partial x_m}\right) (.,.-y)dy\in L^2\times H^{m-1}.
\end{array}
\end{equation}
Hence, we have 
\begin{equation}
\frac{\partial \delta v^{*,\rho,l,k}_i}{\partial \tau}\in L^2\times H^{m-2}
\end{equation}
for all $k\geq 0$.
The next step is to show that we have a contraction for some $\rho_l>0$.
We observe
\begin{equation}\label{NavlerayIIa}
\begin{array}{ll}
{\Big |}\frac{\partial \delta v^{*,\rho,l,k}_i}{\partial \tau}{\Big |}_{L^2\times H^{m-2}}\leq {\Big |}\frac{\partial \delta v^{*,\rho,l,k}_i}{\partial \tau}{\Big |}_{L^2\times H^{m}}\leq \rho_l\nu\sum_{j=1}^n {\Big |}\frac{\partial^2 \delta v^{*,\rho,l,k}_i}{\partial x_j^2}{\Big |}_{L^2\times H^{m}}\\
\\ 
+\rho_l\sum_{j=1}^n {\Big |}v^{*,\rho,l,k-1}_j\frac{\partial \delta v^{*,\rho,l,k-1}_i}{\partial x_j}{\Big |}_{L^2\times H^{m}}+\rho_l\sum_{j=1}^n {\Big |}\delta v^{*,\rho,l,k-1}_j\frac{\partial  v^{*,\rho,l,k-1}_i}{\partial x_j}{\Big |}_{L^2\times H^{m}}
\\
\\
+\rho_l{\Big |}\int_{l-1}^{\tau}\int_{{\mathbb R}^n}\int_{{\mathbb R}^n}\left( \frac{\partial}{\partial x_i}K_n(z-y)\right)\times\\
\\
\times \left( \sum_{m,j=1}^n \frac{\partial \delta v^{*,\rho,l,k-1}_j}{\partial x_m}\left(\frac{\partial v^{*,\rho,l,k-1}_m}{\partial x_j}+\frac{\partial v^{*,\rho,l,k-2}_m}{\partial x_j}\right)\right)  (s,y){\Big |}_{L^2\times H^m}
\end{array}
\end{equation}
We can estimate the right side of (\ref{NavlerayIIa}) and for $m>\frac{5}{2}$ we have the upper bound
\begin{equation}\label{ss}
\begin{array}{ll}
\rho_l\nu n\max_{j\in \left\lbrace 1,\cdots ,n\right\rbrace}{\Big |} \delta v^{*,\rho,l,k}_i{\Big |}_{L^2\times H^{m-2}}\\
\\ 
+\rho_lC_{5/2}n\max_{j\in \left\lbrace 1,\cdots ,n\right\rbrace}  {\Big |}v^{*,\rho,l,k-1}_j{\Big |}_{L^2\times H^{m-1}}{\Big |}\frac{\partial \delta v^{*,\rho,l,k-1}_i}{\partial x_j}{\Big |}_{L^2\times H^{m-1}}\\
\\
+\rho_lC_{5/2}n\max_{j\in \left\lbrace 1,\cdots ,n\right\rbrace} {\Big |}\delta v^{*,\rho,l,k-1}_j{\Big |}_{L^2\times H^{m-1}}{\Big |}\frac{\partial  v^{*,\rho,l,k-1}_i}{\partial x_j}{\Big |}_{L^2\times H^{m-1}}
\\
\\
+\rho_lC_{5/2}C_Kn^2\max_{j,m\in \left\lbrace 1,\cdots ,n\right\rbrace} 
{\Big |}\frac{\partial \delta v^{*,\rho,l,k-1}_j}{\partial x_m}{\Big |}_{L^2\times H^{m-1}}\times\\
\\
\times {\Big |}\frac{\partial v^{*,\rho,l,k-1}_m}{\partial x_j}
+\frac{\partial v^{*,\rho,l,k-2}_m}{\partial x_j}{\Big |}_{L^2\times H^{m-1}}\\
\end{array}
\end{equation}
From (\ref{NavlerayIIa}) and (\ref{ss}) we get
\begin{equation}\label{sss}
\begin{array}{ll}
{\Big |}\frac{\partial \delta v^{*,\rho,l,k}_i}{\partial \tau}{\Big |}_{L^2\times H^{m-2}}\leq \rho_l\nu n\max_{j\in \left\lbrace 1,\cdots ,n\right\rbrace}{\Big |} \delta v^{*,\rho,l,k}_i{\Big |}_{L^2\times H^{m-2}}\\
\\ 
+\rho_lC_{5/2}n\max_{j\in \left\lbrace 1,\cdots ,n\right\rbrace}  {\Big |}v^{*,\rho,l,k-1}_j{\Big |}_{L^2\times H^{m-2}}{\Big |} \delta v^{*,\rho,l,k-1}_i{\Big |}_{L^2\times H^{m-2}}\\
\\
+\rho_lC_{5/2}n\max_{j\in \left\lbrace 1,\cdots ,n\right\rbrace} {\Big |}\delta v^{*,\rho,l,k-1}_j{\Big |}_{L^2\times H^{m-2}}{\Big |}  v^{*,\rho,l,k-1}_i{\Big |}_{L^2\times H^{m-2}}
\\
\\
+\rho_lC_{5/2}C_Kn^2\max_{j,m\in \left\lbrace 1,\cdots ,n\right\rbrace} 
{\Big |}\frac{\partial \delta v^{*,\rho,l,k-1}_j}{\partial x_m}{\Big |}_{L^2\times H^{m-2}}\times\\
\\
\times \left( {\Big |}v^{*,\rho,l,k-1}_m{\Big |}_{L^2\times H^{m-2}}
+{\Big |}v^{*,\rho,l,k-2}_m{\Big |}_{L^2\times H^{m-2}}\right) \\
\end{array}
\end{equation}
Now form previous estimates we have the upper bound
\begin{equation}
{\Big |}v^{*,\rho,l,k-1}_m{\Big |}_{L^2\times H^{m-2}}
+{\Big |}v^{*,\rho,l,k-2}_m{\Big |}_{L^2\times H^{m-2}}\leq 2C^l_{k-1}
\end{equation}
for some constant $C^l_{k-1}>0$, hence
\begin{equation}\label{sssa}
\begin{array}{ll}
{\Big |}\frac{\partial \delta v^{*,\rho,l,k}_i}{\partial \tau}{\Big |}_{L^2\times H^{m-2}}\leq \rho_l\nu n\max_{j\in \left\lbrace 1,\cdots ,n\right\rbrace}{\Big |} \delta v^{*,\rho,l,k}_i{\Big |}_{L^2\times H^{m-2}}\\
\\ 
+\rho_lC_{5/2}(2n+n^2)\max_{j\in \left\lbrace 1,\cdots ,n\right\rbrace}(1+C_k)C^l_{k-1}{\Big |} \delta v^{*,\rho,l,k-1}_i{\Big |}_{L^2\times H^{m-2}}
\end{array}
\end{equation}
Furthermore, from our previous estimates and for some $\rho_l^0$ we have contraction for the first term on the right side of (\ref{sssa}). Hence for some $\rho_l>0$ independent of $k$ we get 
\begin{equation}\label{sssa}
\begin{array}{ll}
{\Big |}\frac{\partial \delta v^{*,\rho,l,k}_i}{\partial \tau}{\Big |}_{L^2\times H^{m-2}}\leq \frac{1}{4}\max_{j\in \left\lbrace 1,\cdots ,n\right\rbrace}(1+C_k)C_{k-1} \delta v^{*,\rho,l,k-1}_i{\Big |}_{L^2\times H^{m-2}}
\end{array}
\end{equation}
Similar construction estimates can be obtained by analogous methods successively for higher mixed derivatives
the functions
\begin{equation}
D^{\alpha}_x\frac{\partial \delta v^{*,\rho,l,k}_i}{\partial \tau}
\end{equation}
(for multiindex $\alpha$), then for
\begin{equation}
\frac{\partial^2 \delta v^{*,\rho,l,k}_i}{\partial \tau^2},
\end{equation}
and then successively for higher order mixed and higher order time derivatives. 
Finally, let us make a remark concerning $C^m\left((l-1,l),H^{m}\right) $ contraction estimates. Here, we may use again the generalized Young inequality, i.e., the fact that for $1\leq p,q,r\leq \infty$ 
\begin{equation}\label{Y1}
 f\in L^q~\mbox {and}~ g\in L^p~\rightarrow  f\ast g\in L^r,\mbox{if}~\frac{1}{p}+\frac{1}{q}=1+\frac{1}{r},
\end{equation}
and
\begin{equation}\label{Y2}
|f\ast g|_{L^r}\leq |f|_{L^p} |g|_{L^q}
\end{equation}
for  a convolution $f\ast g$. We may treat time and space differently as we observed in \cite{KB2}. Applying Fubbini's theorem we may fix time first and treat the spatial variables as before with the Gaussian in $L^1$ and the recursively defined terms involving the approximatively defined value functions and their derivatives in $L^2$. For the time variables we then may use the Young inequality with $r=\infty$ and where the requirement $\frac{1}{p}+\frac{1}{q}=1$ gives even more flexibility in order to deal with the convolution estimates. An explicit treatment for these estimates can be found in \cite{KHyp}, and - for a slightly different scheme in \cite{KB2}, where the argument there can be easily adapted to the scheme used in this paper.
 
\section{Global regular linear upper bounds and a proof of theorem \ref{uncontrolledthm1} and theorem \ref{uncontrolledthm2} }
Let us summarize first: we construct the velocity function components of the Navier Stokes equation, i.e., functions $v_i$ which solve the equation \ref{Navlerayorg}, by iterated local controlled schemes $v^{r,*,\rho,l,k}_i,~1\leq i\leq n,~k\geq 1,~l\geq 1$, or by a similar scheme $v^{r,\rho,l,k}_i,~1\leq i\leq n,~k\geq 1,~l\geq 1$. At each time step a control function $r^l_i$ is determined in addition, such that the information of $v_i$ can be restored by the local functions $v^{*,\rho,l}_i:[l-1-l]\times {\mathbb R}^n\rightarrow {\mathbb R},~l\geq 1$, which are determined by
\begin{equation}
v^{*,\rho,l}_i=v^{r,*,\rho,l}_i-r^l_i.
\end{equation}
The local limit functions are determined by local limits
\begin{equation}
v^{r,*,\rho,l}_i=\lim_{k\uparrow \infty}v^{r,*,\rho,l,k}=v^{r^{l-1},*,\rho,l,k}_i+\delta r^{l-1}_i,
\end{equation}
where the function $v^{r^{l-1},*,\rho,l,k}_i,~1\leq i\leq n$ solves an uncontrolled Navier Stokes equation with controlled data, i.e., it solves the equation system
\begin{equation}\label{Navlerayconuncon1}
\left\lbrace \begin{array}{ll}
\frac{\partial v^{r^{l-1},*\rho,l,k}_i}{\partial \tau}-\rho_l\nu\sum_{j=1}^n \frac{\partial^2 v^{r^{l-1},*,\rho,l,k}_i}{\partial x_j^2} 
+\rho_l\sum_{j=1}^n v^{r^{l-1},*,\rho,l,k-1}_j\frac{\partial v^{r^{l-1},*,\rho,l,k-1}_i}{\partial x_j}=\\
\\ \rho_l\sum_{j,m=1}^n\int_{{\mathbb R}^n}\left( \frac{\partial}{\partial x_i}K_n(x-y)\right) \sum_{j,m=1}^n\left( \frac{\partial v^{r^{l-1},*,\rho,l,k-1}_m}{\partial x_j}\frac{\partial v^{r^{l-1},*,\rho,l,k-1}_j}{\partial x_m}\right) (\tau,y)dy,\\
\\
\mathbf{v}^{r^{l-1},*,\rho,l,k}(l-1,.)=\mathbf{v}^{r,*,\rho,l-1}(l-1,.),
\end{array}\right.
\end{equation}

Note that in the scheme with upper script $*$ the information about the convection term at local iteration step $k\geq 1$ is from the previous local iteration step $k-1$. This allows us to consider classical representations of the approximative local solutions  $v^{r^{l-1},*,\rho,l,k}_i$ in terms of convolutions with a fundamental solution of Gaussian type, which simplifies the argument of local contraction in the previous sections. Note that (\ref{Navlerayconuncon1}) is a time-local uncontrolled Navier Stokes equation with controlled data $\mathbf{v}^{r,*,\rho,l-1}(l-1,.)$. We get the controlled velocity functions at time step $l\geq 1$ by adding a control function increment $\delta r^l_i$, where we have some freedom of choice. 
The local limits
\begin{equation}
v^{r,*,\rho,l}_i=\lim_{k\uparrow \infty}v^{r^{l-1},*,\rho,l,k}_i+\delta r^l_i=\lim_{k\uparrow \infty}v^{r,*\rho,l,k}_i
\end{equation}
are the same as for the local scheme $v^{r,\rho,k,l}_i=v^{r^{l-1},\rho,l,k}_i+\delta r^l_i$, i.e., we have
\begin{equation}\label{note1}
v^{r,*,\rho,l}_i=\lim_{k\uparrow \infty}v^{r^{l-1},*,\rho,l,k}_i+\delta r^l_i=v^{r,\rho,l}_i.
\end{equation}
\begin{rem}
Recall that in this alternative scheme $v^{r^{l-1},\rho,l,k}_i$ solves
\begin{equation}\label{Navlerayconuncon2}
\left\lbrace \begin{array}{ll}
\frac{\partial v^{r^{l-1},\rho,l,k}_i}{\partial \tau}-\rho_l\nu\sum_{j=1}^n \frac{\partial^2 v^{r^{l-1},\rho,l,k}_i}{\partial x_j^2} 
+\rho_l\sum_{j=1}^n v^{r^{l-1}\rho,l,k-1}_j\frac{\partial v^{r^{l-1},\rho,l,k}_i}{\partial x_j}=\\
\\ \rho_l\sum_{j,m=1}^n\int_{{\mathbb R}^n}\left( \frac{\partial}{\partial x_i}K_n(x-y)\right) \sum_{j,m=1}^n\left( \frac{\partial v^{r^{l-1},\rho,l,k-1}_m}{\partial x_j}\frac{\partial v^{r^{l-1},\rho,l,k-1}_j}{\partial x_m}\right) (\tau,y)dy,\\
\\
\mathbf{v}^{r^{l-1},\rho,l,k}(l-1,.)=\mathbf{v}^{r,\rho,l-1}(l-1,.),
\end{array}\right.
\end{equation}
We considered local contraction for equations of the form (\ref{Navlerayconuncon2}) in \cite{KB2}. This is slightly more complicated as the local contraction argument of this paper as we use the adjoint of the fundamental solution. However, this adjoint can also be used in the case of variable coefficients, and this shows that the conclusions for global existence can be extended to models with variable viscosity.
\end{rem}
For the global conclusions it does not matter which local iteration scheme we use since they lead all to the same local solution function in regular function spaces $C^1\left( \left[l-1,l\right],H^m\cap C^m \right)$ for $m\geq 2$ by local contraction. Especially, by (\ref{note1}) we have
\begin{equation}
v^{r^{l-1},\rho,l}_i=v^{r^{l-1},*,\rho,l}_i,
\end{equation}
and we may use the functions with upper script $*$ and without upper script $*$ interchangeably as long as no local iteration index $k$ occurs.

The index $l\geq 1$ is a time step number index. At each time step $l\geq 1$ we assume that the function $v^{r,*,\rho,l-1}_i(l-1,.),~1\leq i\leq n$ is determined. These are the data for a time-local Cauchy problem for an uncontrolled Navier Stokes equation with controlled data and solution $v^{r^{l-1},*,\rho,l}_i$, which solves
\begin{equation}\label{Navlerayconunconlimit}
\left\lbrace \begin{array}{ll}
\frac{\partial v^{r^{l-1},*\rho,l}_i}{\partial \tau}-\rho_l\nu\sum_{j=1}^n \frac{\partial^2 v^{r^{l-1},*,\rho,l,k}_i}{\partial x_j^2} 
+\rho_l\sum_{j=1}^n v^{r^{l-1},*,\rho,l}_j\frac{\partial v^{r^{l-1},*,\rho,l}_i}{\partial x_j}=\\
\\ \rho_l\sum_{j,m=1}^n\int_{{\mathbb R}^n}\left( \frac{\partial}{\partial x_i}K_n(x-y)\right) \sum_{j,m=1}^n\left( \frac{\partial v^{r^{l-1},*,\rho,l}_m}{\partial x_j}\frac{\partial v^{r^{l-1},*,\rho,l}_j}{\partial x_m}\right) (\tau,y)dy,\\
\\
\mathbf{v}^{r^{l-1},*,\rho,l}(l-1,.)=\mathbf{v}^{r,*,\rho,l-1}(l-1,.).
\end{array}\right.
\end{equation}
The solution $v^{r.*,\rho,l}_i,~1\leq i\leq n$ of the latter equation is obtained by a local iteration scheme $v^{r.*,\rho,l,k}_i,~1\leq i\leq n,~k\geq 1$, where the latter functions solve (\ref{Navlerayconuncon1}). The local contraction results of section 9 ensure that there exists such a unique local solution in a regular function space $C^1\left(\left[l-1,l\right],H^m\cap C^m \right)$ for $m\geq 2$. The controlled velocity function at time step $l\geq 1$ is then determined as
\begin{equation}
v^{r,\rho,l}_i=v^{r^{l-1},\rho,l}_i+\delta r^l_i
\end{equation}
where we have some freedom to choose the control function increments $\delta r^l_i,~1\leq i\leq n$. We want to choose them such that for given $m\geq 2$ and data $h_i,~1\leq i\leq n$ we have a time step size $\rho_l$ (constant or at least $\frac{1}{l}\precsim \rho_l$) such that for some constant $C>0$ with $\max_{1\leq i\leq n}{\big |}h_i{\big |}_{H^m\cap C^m}\leq C$ we have
\begin{equation}
{\big |}v^{r,,\rho,l}_i(l-1,.){\big |}_{H^m\cap C^m}\leq C\Rightarrow {\big |}v^{r,,\rho,l}_i(l,.){\big |}_{H^m\cap C^m}\leq C
\end{equation}
inductively for all $l\geq 1$ for the controlled scheme, where the growth of the control function can be bounded linearly in time. This can be achieved if the volume of the increment in $[l-1,l]\times H^m\cap C^m$ is proportional to $1$ independently of the time step number.  A possibility to achieve this is the choice of the simple control function
\begin{equation}\label{lbincrementcontrol}
\begin{array}{ll}
\delta r^l_i(\tau,x)=\int_{l-1}^{\tau}\left( -\frac{v_i^{r,*,\rho,l-1}}{C}(l-1,.)\right) G_l(\tau-s,x-y)dyds
\end{array}
\end{equation}
at each time step $l\geq 1$. For small time step size $\rho_l\sim \frac{1}{C^3}$ the diffusion effect of the Gaussian becomes small and the control function increment
\begin{equation}\label{deltaobserv1}
\delta r^l_i(l,x)=\int_{l-1}^{l}\left( -\frac{v_i^{r,*,\rho,l-1}}{C}(l-1,.)\right) G_l(\tau-s,x-y)dyds.
\end{equation}
is close to
\begin{equation}\label{deltaobserv2}
-\frac{v_i^{r,*,\rho,l-1}}{C}(l-1,.).
\end{equation}
Note that a control function as in (\ref{lbincrementcontrol}) has the effect of a decrease of norms over one time step if the time step size $\rho_l$ becomes small.
 As data $v_i^{r,*,\rho,l-1}$ become large in some norm this term dominates the growth of $\delta v^{r,*,\rho,l}_i(l,.)$ which depends on the time size factor $\rho_l$, i.e., we may choose a step size $\rho_l$ such that
\begin{equation}\label{vincrement}
\begin{array}{ll}
|v^{r,*,\rho,l}_i(l,.)-v^{r,\rho,l-1}_i(l-1,.)|_{H^m\cap C^m}\\
\\
=|v^{r,*,\rho,l}_i(l,.)-v^{r^{l-1},\rho,l-1}_i(l-1,.)|_{H^m\cap C^m}\\
\\
={\big |}\sum_{k=1}^{\infty}\delta v^{r^{l-1},\rho,*,l,k}_i(l,.){\big |}_{H^m\cap C^m}\leq \frac{1}{4}+\frac{1}{4}=\frac{1}{2}.
\end{array}
\end{equation}
This is a consequence of our local contraction results. Let us consider the situation where we have an upper bound for the controlled value functions $v^{r,\rho,l-1}_i(l-1,.), 1\leq i\leq n$ inductively from the previous time step of the form
\begin{equation}
|v_i^{r,\rho,l-1}(l-1,.)|_{H^m\cap C^m}\geq C
\end{equation}
for some $m\geq 2$.
Now, assuming that $C\geq 4$ w.l.o.g., and assuming that data become large (otherwise we get the persistence of the upper bound at time step $l1$ for free), i.e., assuming that
\begin{equation}
|v_i^{r,\rho,l-1}(l-1,.)|_{H^m\cap C^m}\geq C-\frac{1}{2},
\end{equation}
as $\rho_l>0$ becomes small the statement that (\ref{deltaobserv1}) is close to (\ref{deltaobserv2}) implies that
\begin{equation}\label{controlincrement}
|\delta r^l_i(l,.)|_{H^m\cap C^m}\geq  {\big|}\frac{v_i^{r,\rho,l-1}}{C}(l-1,.){\big |}_{H^m\cap C^m}-\frac{1}{8}\geq \frac{3}{4},
\end{equation}
such that (\ref{controlincrement}) dominates (\ref{vincrement}). It follows that for all $x\in {\mathbb R}^n$ that
\begin{equation}
|v^{r,*,\rho,l}_i(l,.)|_{H^m\cap C^m}\leq |v^{r,\rho,l-1}_i(l-1,)|_{H^m\cap C^m}.
\end{equation}
This reasoning holds for all $1\leq i\leq n$ such that the controlled velocity functions  $v^{r,*,\rho,l}_i$ have a uniform upper bound $C$. Furthermore, as the controlled velocity functions have the upper bound $C$, the corresponding control function increments satisfy
\begin{equation}
|\delta r^l_i(l,x)|_{H^m\cap C^m}\leq c,
\end{equation}
for some constant $c\geq 1$ (which is independent of the time step number $l$) such that for $l\geq 1$ we have
\begin{equation}
|r^l_i(l,x)|\leq |r^0_i(0,.)|+\sum_{p=1}^{l}|\delta r^l_i(l,x)|\leq C+cl,
\end{equation}
where $C>0$ is an upper bound of the modulus of the initial data of the $i$th control function $|r^0_i(0,.)|$, i.e., the control function has linear growth (at most). Note that without smoothing (convolution with a density) in (\ref{lbincrementcontrol}) the constant $c$ can be chosen to be $c=1$. Note that this is also a possible choice for the control function increments. 

We have obtained global regular upper bounds in $C^1\left( [0,T],H^m\cap C^m\right)$ for $m\geq 2$ and, hence a global regular solution in the same function space for the controlled velocity function $\mathbf{v}^r=\left(v^r_1,\cdots,v^r_n \right)^T$ which equals $v^{r,l}_i(t,.):=v^{r,*,\rho,l}_i(\tau,.)$ for $t\in \left[ \sum_{m=1}^{l-1}\rho_m,\sum_{m=1}^{l}\rho_m\right]$ and $\tau\in [l-1,l]$ respectively via a time recursive scheme. In the construction of the scheme we obtained a global regular upper bound for the control function $\mathbf{r}=\left(r_1,\cdots,r_n \right)^T$ where each component $r_i(t,.)$ equals $r^l_i(\tau,.)$ on $[l-1,l]\times {\mathbb R}^n$ in transformed time coordinates $\tau$ (recall that $\rho_l\tau=t$ at each time step). 

The global regular solution $\mathbf{v}^r=\mathbf{v}+\mathbf{r}$ 
of (\ref{Navleray}) in the function space $C^1\left( [0,T],H^m\cap C^m\right)$  and arbitrary $T>0$ with global regular upper bounds for the controlled velocity function components $v^r_i,~1\leq i\leq n$ and for the control function components $r_i$ in the same function space imply that the solution function $\mathbf{v}=(v_1,\cdots,v_n)^T$ of the incompressible Navier Stokes equation has the property
\begin{equation}
v_i=v^r_i-r_i\in C^1\left( [0,T],H^m\cap C^m\right)
\end{equation}
for $m\geq 2$ and all $1\leq i\leq n$ and arbitrary $T>0$.
Note that we can compute the solution in the scheme via $v^{\rho,*,l}_i$ with
\begin{equation}
v^{\rho,*,l}_i=v^{r,\rho,l}_i-r^l_i
\end{equation}
time step by time step. Note that all the reasoning above is essentially pointwise and we could have replaced the ${|}.|_{H^m\cap C^m}$-norm by a simple modulus and then observe that  a similar reasoning holds for the time growth of the spatial derivatives such that for appropriate $\rho_l=\rho>0$ we have a global upper bound (generic) $C>0$ of the modulus of  $D^\alpha_xv^{r,*,\rho,l}_i$ (independent of the time step number $l\geq 1$) and a linear upper bound of the modulus of the control functions $D^{\alpha}_x r^l_i$ (with respect to the time step number $l\geq 1$). The existence of higher order time derivatives for smooth data follows straightforwardly.Global regular existence follows from this observation and the local existence result. This proves theorem \ref{uncontrolledthm1}. As the upper bounds for the controlled velocity functions are uniform with respect to time preserved we get a global scheme with a constant time step size $\rho_l=\rho\sim \frac{1}{C^3}$ which is independent of the time step number $l\geq 1$. The choice of a decreasing time step size $\rho_l\sim \frac{1}{l}$ improves the estimate of the upper bound of the control function to be logarithmic in time. This implies growth of the value function $v^{\rho,l}_i$ of the form ${\big |}v^{\rho,l}_i(l,.){\Big |}_{H^m\cap C^m}\leq C+C\sqrt{l}$ for $m\geq 2$, and this impelies in turn that we get a linear global regular upper bound for the Leray projection term as stated in theorem \ref{uncontrolledthm1}.

Next we want give an alternative proof and show that a simplified control function without consumption source terms leads to a global linear upper bound for the Leray projection term and therefore to a global scheme. We refer also to (\cite{KHyp}). Assume inductively (with respect to the time step number $l\geq 1$ that we have realized the bound 
\begin{equation}
D^{\alpha}_xv^{r,*,\rho,l-1}_i(l-1,.)\sim \sqrt{l-1} \mbox{ for }|\alpha|\leq m
\end{equation}
for some $m\geq 2$ which is fixed in advance. First note that the local contraction result
\begin{equation}\label{loccontrupp1}
{\big |}\delta v^{*,\rho,l,k}_i{\big |}_{C^m\left((l-1,l), H^{2m}\right) }\leq \frac{1}{2}{\big |}\delta v^{*,\rho,l,k-1}_i{\big |}_{C^m\left((l-1,l), H^{2m}\right)}
\end{equation}
easily extends to 
\begin{equation}\label{loccontrupp2}
{\big |}\delta v^{r,*,\rho,l,k}_i{\big |}_{C^m\left((l-1,l), H^{2m}\right)}\leq \frac{1}{2}{\big |}\delta v^{r,*,\rho,l,k-1}_i{\big |}_{C^m\left((l-1,l), H^{2m}\right)}
\end{equation}
for all $1\leq i\leq n$ a bit. The reason is that it extends to $\delta v^{r^{l-1},*,\rho,l,k}_i$ first (same prove with different initial data) and then the additional control increments $\delta r^{l}_i$ cancel in contraction estimates of the controlled function increments $\delta v^{r,*,\rho,l}_i$ such that we have (\ref{loccontrupp2}).
We  may refine the latter local contraction result for all $1\leq i\leq n$ a bit. 

In the form (\ref{loccontrupp1}) it just ensures that the local limit 
\begin{equation}\label{funciv2}
\mathbf{v}^{*,\rho ,l }=\mathbf{v}^{*,\rho,l-1}+\sum_{k=1}^{\infty} \delta \mathbf{v}^{\rho,l,k}
=\mathbf{v}^{*,\rho,l,1}+\sum_{k=2}^{\infty} \delta \mathbf{v}^{*,\rho,l,k}
\end{equation} 
of the corresponding local functional series represents a local solution of the incompressible Navier Stokes equation on the domain $[l-1,l]\times {\mathbb R}^n$. 
Now consider the first increment $\delta v^{r,*,\rho,l,1}_i$ for $1\leq i\leq n$.  
In the form (\ref{loccontrupp2}) it just ensures that the local limit of the controlled scheme exists, i.e. 
\begin{equation}\label{funciv22}
\mathbf{v}^{r,*,\rho ,l }=\mathbf{v}^{r,*,\rho,l-1}+\sum_{k=1}^{\infty} \delta \mathbf{v}^{r^{l-1},\rho,l,k}+\delta r^l_i
=:\mathbf{v}^{r,*,\rho,l,1}+\sum_{k=2}^{\infty} \delta \mathbf{v}^{*,\rho,l,k}
\end{equation} 
exists. Note again that we used the simplified notation, i.e., $\delta \mathbf{v}^{*,\rho,l,k}$ is synonymous with  $\delta \mathbf{v}^{r^{l-1},\rho,l,k}$ in (\ref{funciv22}) and should not be confused with $\delta \mathbf{v}^{\rho,l,k}$ in (\ref{funciv2})- they just satisfy equation of analogous structure where in case of the controlled scheme the initial data $v^{r,*,\rho,l-1}_i(l-1,.)$ and in the other case the initial data $v^{*,\rho,l-1}_i$ are involved. 
For simplicity of notation let us consider the uncontrolled scheme again, i.e., consider the data $v^{*,\rho,l-1}_i(l-1,.)$ and the local scheme without control function. The reasoning transfers to the controlled scheme straightforwardly as we shall observe below. Consider the first increment $\delta v^{*,\rho,l,1}_i$ for $1\leq i\leq n$. 

For the classical Navier Stokes equation with constant viscosity the functions
$v^{*,\rho,1,l}_i$ have the classical representation
 \begin{equation}\label{scalparasystlin10v*}
 \begin{array}{ll}
v^{*,\rho,1,l}_i(\tau,x)=\int_{{\mathbb R}^n}v^{*,\rho,l-1}_i(l-1,y)G_l(\tau,x-y)dy\\
\\ 
-\rho_l\int_{l-1}^{\tau}\int_{{\mathbb R}^n}\sum_{j=1}^n v^{*,\rho,l-1}_j(s,y)\frac{\partial v^{*,\rho,l-1}_i}{\partial x_j}(s,y)G_l(\tau-s,x-y)dyds\\
\\
+\rho_l\int_{l-1}^{\tau}\int_{{\mathbb R}^n}\int_{{\mathbb R}^n}\sum_{j,m=1}^n \left( \frac{\partial v^{*,\rho,l-1}_j}{\partial x_m}\frac{\partial v^{*,\rho,l-1}_m}{\partial x_j}\right) (l-1,y)\frac{\partial}{\partial x_i}K_n(z-y)\times\\
\\
\times G_l(\tau-s,x-z)dydzds.
\end{array}
\end{equation} 
For a time step size $\rho_l$ of order
\begin{equation}
\rho_l\sim \frac{1}{l},
\end{equation}
we observe that
\begin{equation}
D^{\alpha}_x\delta v^{*,\rho,l,1}_i=D^{\alpha}_xv^{*,\rho,l,1}_i-D^{\alpha}_xv^{*,\rho,l-1}_i(l-1,.)\sim 1
\end{equation}
for the following reasons. From (\ref{scalparasystlin10v*}) we observe that we have to estimate the term
\begin{equation}\label{firstterm}
\int_{{\mathbb R}^n}v^{*,\rho,l-1}_i(l-1,y)G_l(\tau,x-y)dy-v^{*,\rho,l-1}_i(l-1,x),
\end{equation}
together with two term which have the coefficient $\rho_l$. Since the term in (\ref{firstterm}) corresponds to a function which is the solution of the Cauchy problem
\begin{equation}
\left\lbrace \begin{array}{ll}
 \frac{\partial}{\partial \tau}\delta v^{*,\rho,l,1t}_i-\rho_l \Delta \delta v^{*,\rho,l,1t}_i=\rho_l\Delta v^{*,\rho,l-1}_i(l-1,.),\\
 \\
\delta v^{*,\rho,l,1t}_i=0, 
\end{array}\right.
\end{equation}
such that for $|\alpha|\leq m$ we have we may use the inductive information
\begin{equation}
\Delta D^{\beta}_xv^{*,\rho,l-1}_i(l-1,.)\sim \sqrt{l-1} \mbox{ for } |\beta|\leq m-2
\end{equation}
in order to conclude that we even have
\begin{equation}
D^{\beta}_x\delta v^{*,\rho,l,1t}_i\sim \frac{1}{\sqrt{l}} \mbox{ for } |\beta|\leq m-2. 
\end{equation}
However for $|\alpha|\geq m-1$ we need a different reasoning. Since the first term in (\ref{firstterm}) is a convolution we may write
\begin{equation}\label{firsttermeq}
D^{\alpha}_x\delta v^{*,\rho,l,1t}_i=\int_{{\mathbb R}^n}D^{\alpha}_xv^{*,\rho,l-1}_i(l-1,y)G_l(\tau,x-y)dy-D^{\alpha}_xv^{*,\rho,l-1}_i(l-1,x)
\end{equation}
for all $|\alpha|\leq m$, and the fact that we have polynomial decay of order $m\geq 2$ for $D^{\alpha}_xv^{*,\rho,l-1}_i(l-1,.)$ we have a limit (with respect to the supremum norm) of the from
\begin{equation}\label{firsttermeq2}
\lim_{\rho_l\downarrow 0}{\Big |}\int_{{\mathbb R}^n}D^{\alpha}_xv^{*,\rho,l-1}_i(l-1,y)G_l(\tau,.-y)dy-D^{\alpha}_xv^{*,\rho,l-1}_i(l-1,.){\Big |}=0
\end{equation}
independent of the time step number $l\geq 1$, and this leads to the conclusion that
\begin{equation}
D^{\alpha}_x\delta v^{*,\rho,l,1t}_i\sim 1\mbox{ for } |\alpha|\leq m. 
\end{equation}
Next the second term (\ref{scalparasystlin10v*}) is a convolution with the fundamental solution $G_l$ of products of functions of the type
\begin{equation}
v^{*,\rho,l-1}_j(s,y),\frac{\partial v^{*,\rho,l-1}_i}{\partial x_j}\sim \sqrt{l-1}
\end{equation}
integrated over time and space. Since these products have a factor $\rho_l$ and using the estimates for the fundamental solution  obtained in our local contraction results we have
\begin{equation}\label{scalparasystlin10v*}
 \begin{array}{ll}
-\rho_l\int_{l-1}^{\tau}\int_{{\mathbb R}^n}\sum_{j=1}^n v^{*,\rho,l-1}_j(s,y)\frac{\partial v^{*,\rho,l-1}_i}{\partial x_j}(s,y)G_l(\tau-s,x-y)dyds\sim 1\\
\\
+\rho_l\int_{l-1}^{\tau}\int_{{\mathbb R}^n}\int_{{\mathbb R}^n}\sum_{j,m=1}^n \left( \frac{\partial v^{*,\rho,l-1}_j}{\partial x_m}\frac{\partial v^{*,\rho,l-1}_m}{\partial x_j}\right) (l-1,y)\frac{\partial}{\partial x_i}K_n(z-y)\times\\
\\
\times G_l(\tau-s,x-z)dydzds\sim 1.
\end{array}
\end{equation} 
Hence we can realise the bound
\begin{equation}\label{firstincrg}
D^{\alpha}_x\delta v^{*,\rho,l,1}_i\sim \sqrt{l-1}+1,~\mbox{ for }|\alpha|\leq m,
\end{equation}
and this has some consequence for a refine of contraction of the higher order approximations.
Again we have the classical representation
\begin{equation}\label{deltaurhok0rep}
 \begin{array}{ll}
\delta v^{*,\rho,k+1,l}_i(\tau,x)=-\rho_l\int_{l-1}^{\tau}{{\mathbb R}^n}\sum_{j=1}^n v^{*,\rho,l,k-1}_j\frac{\partial \delta v^{*,\rho,l,k}_i}{\partial x_j}(s,y)G_l(\tau-s,x-y)dyds\\
\\
-\rho_l\int_{l-1}^{\tau}\int_{{\mathbb R}^n}\sum_j\delta v^{*,\rho,l,k}_j\frac{\partial v^{*,\rho,l,k}}{\partial x_j}(s,y)G_l(\tau-s,x-y)dyds+\\ 
\\
+\rho_l\int_{l-1}^{\tau}{{\mathbb R}^n}\int_{{\mathbb R}^n}K_{n,i}(z-y){\Big (} \left( \sum_{j,m=1}^n\left( v^{*,\rho,l,k}_{m,j}+v^{*,\rho,l,k-1}_{m,j}\right)(s,y) \right)\times\\
\\
\times  \delta v^{*,\rho,l,k}_{j,m}(s,y) {\Big)}G_l(\tau-s,x-z)dydzds.
\end{array}
\end{equation}
For $k=1$ we have the factors 
\begin{equation}
v^{*,\rho,0,l}_j=v^{*,\rho,l-1}_j(l-1.),\frac{\partial v^{\rho,l,1}}{\partial x_j}\sim \sqrt{l-1}+1
\end{equation}
for functional increments $\delta v^{\rho,k,l}_j$, and this leads to a contraction constant of order $\sim\frac{1}{\sqrt{l}}$ for the second substep $k=2$ of the time step $l$ by arguments which are completely analogous to the local contraction arguments above. Similar for derivatives up to order $|\alpha|\leq m$, where one derivative is taken by the fundamental solution as we did in the local contraction result above.
Note that for $k\geq 2$ we have
\begin{equation}
D^{\alpha}_x\delta v^{r,*\rho,l,k}_i=D^{\alpha}_x\delta v^{*,\rho,l,k}_i,
\end{equation}
so that we do not need to establish a contraction result for the controlled velocity functions. The same analysis can be done with initial data $v^{r,*,\rho,l-1}_i$, of course.
Hence, we get for $k\geq 2$
\begin{equation}\label{secondincrgr}
D^{\alpha}_x\delta v^{r,*,\rho,l,k}_i\sim \left( \frac{1}{\sqrt{l}}\right)^{k-1},~\mbox{ for }|\alpha|\leq m,
\end{equation}
for the controlled scheme, since for $k\geq 2$ we have
\begin{equation}
D^{\alpha}_x\delta v^{r,*,\rho,l,k}_i=D^{\alpha}_x\delta v^{*,\rho,l,k}_i(:=D^{\alpha}_x\delta v^{r^{l-1},*,\rho,l,k}_i),
\end{equation}
recalling our convention for the controlled scheme.
As we said these observations motivate our definition of a control functions $r^l_i$ (or a part of the control function) in \cite{KB1} and \cite{KB2}, where we defined
\begin{equation}\label{deltarl}
\delta r^l_i=r^l_i-r^{l-1}_i(l-1,.)=-\delta v^{r,*,\rho,l,1}_i.
\end{equation}
This implies that we have
\begin{equation}\label{funciv3}
\begin{array}{ll}
v^{r,*,\rho ,l }_i=v^{r,\rho,l-1}_i+\sum_{k=1}^{\infty} \delta v^{r,*,\rho, l,k}_i\\
\\
=v^{r,*,\rho,l-1}_i+\delta v^{r,*,\rho,l,1}_i+\sum_{k=2}^{\infty} \delta v^{r,*,\rho, l,k}_i\\
\\
=v^{r,*,\rho,l-1}_i+\sum_{k=2}^{\infty} \delta v^{*,\rho, l,k}_i
,~1\leq i\leq n,
\end{array}
\end{equation}
Hence, we have
\begin{equation}
D^{\alpha}_xv^{r,*,\rho ,l }_i\sim \sqrt{l} \mbox{ for } |\alpha|\leq m.
\end{equation}
Furthermore, note that
\begin{equation}
D^{\alpha}_xr^l_i\sim l\mbox{ for } |\alpha|\leq m.
\end{equation}
This closes the alternative argument for theorem \ref{uncontrolledthm1} and theorem \ref{uncontrolledthm2}.

\section{Further considerations concerning global regular upper bounds}
We supplement the discussion of the preceding section, and supplement the considerations of section 8 in order to show that a sharper upper bound as stated in theorem  \ref{uncontrolledthm3} can be constructed. As we remarked in section 5, this sharper result can be obtained also by auto-controlled schemes.
We have obtained a global bound of the controlled velocity functions $v^{r,*,\rho}_i,~1\leq i\leq n$ and a global linear bound of the control function $r_i$. It follows that we have a global linear bound of the velocity functions  $v^{\rho,*}_i,~1\leq i\leq n$ in time-transformed coordinates, and it follows that we have a linear global upper bound for the solution of the incompressible Navier Stokes equation itself. This is sufficient in order to prove global regular existence. The regularity obtained by the scheme is clearly related to the regularity of the spatial image $H^m$ of the function spaces $C^0\left([l-1,l],H^m \right)$ or $C^1\left([l-1,l],H^m \right)$. We have observed by local analysis that the spatial dependence of the local solutions $v^{*,\rho,l}_i=v^{r,*,\rho,l}_i-r^l_i,~1\leq i\leq n$ is of regularity  $C^m_0\cap H^ m$ on the domain $(l-1,l)\times {\mathbb R}^n$. Depending on the scheme a little additional work is needed to get the same regularity for the local time transformed solutions of the uncontrolled velocity functions $v^{*,\rho,l}_i$ with uncontrolled data for all times including the positive integer time values $l\geq 1$. Well it is rather routine to check this and we shall add some remarks below. 
Next we add some considerations about the extended scheme with small foresight.
We assume that such a $\epsilon >0$ has been chosen. We shall determine in the following how small it should be in order to facilitate the reasoning about uniform global upper bounds.  
Next, for some $1\leq i\leq n$ consider some argument $x\in {\mathbb R}^n$ and the function values $v^{r^{l-1},*,\rho,l}_i(l,x)$ and $r^{l-1}_i(x)$. The design of the control function for a scheme with small foresight is such that for small time step size $\rho_l$ after finitely many steps we have a transition from equal signs to equal signs as we shall see. In the case considered in section 7 we considered the upper bound $C^2+1$ for the control function over all time which becomes indeed an upper bound $C^2$ after finitely many time step. Next we start with an upper bound $(C')^2$ in a different case, where we may consider the constant $C$ to be generic and just write $C^2$.  Hence, assume that $v^{r,*,\rho,l-1}_i(l-1,x)$ and $r^{l-1}_i(l-1,x)$ have an equal sign where the control function value is not zero, and have the strict upper bounds $C$ and $C^2$ respectively (we use 'strict' upper bounds in order make the estimates simpler in the sense that the diffusion effects are accounted for by substituting $<$ by $\leq $).   Then, if the two function values $v^{r^{l-1},*,\rho,l}_i(l,x)$ and $r^{l-1}_i(x)$have different signs, i.e. if, $x\in V^{l-1,i}_{v+r-}\cup V^{l-1,i}_{v-r+}$, then we have
\begin{equation}
\begin{array}{ll}
{\big |}v^{r,*,\rho,l}_i(l,x){\big |}= {\big |}v^{r^{l-1},*,\rho,l}_i(l,x)+\delta r^l_i(l,x){\big |}\\
\\
\leq {\big |}v^{r,*,\rho,l-1}_i(l-1,x){\big |}+{\big |}\delta v^{r^{l-1},*,\rho,l}_i(l,x)+\delta r^l_i(l,x){\big |}\\
\\
={\big |}v^{r,*,\rho,l-1}_i(l-1,x){\big |}+{\Big |}\delta v^{r^{l-1},*,\rho,l}_i(l,x)\\
\\
-\int_{l-1}^{l}\int_{{\mathbb R}^n}g^l_i(y) p_l(\tau-s,x-y)dyds{\Big |},
\end{array}
\end{equation}
where the latter term involving the function $g^l_i$ is close to
\begin{equation}
-\left( 2v^{r^{l-1},*,\rho,l}_i(l,x)+\frac{r^{l-1}_i(l-1,x)}{C^2}\right)
\end{equation}
for small time step size $\rho_l>0$.
As $v^{r,*,\rho,l-1}_i(l-1,x)$ and $r^{l-1}_i(l-1,x)$ have the same sign and the modulus of the increment of the uncontrolled velocity function with controlled data $\delta v^{r^{l-1},*,\rho}_i(l,x)$ is bounded by $\epsilon >0$, in the case considered 
\begin{equation}
\delta v^{r^{l-1},*,\rho}_i(l,x)\in \left[-\epsilon ,0\right) 
\end{equation}
This implies that after one time step we have the lower and upper bounds
\begin{equation}
\epsilon \leq {\big |}v^{r,*,\rho,l}_i(l,x){\big |}\leq 1+\epsilon. 
\end{equation}
If the control function value ${\big |}r^{l-1}_i(l-1,x){\big |}\in \left[C,C^2\right] $, then an upper bound for the control function is
\begin{equation}
{\big |}r^{l}_i(l,x){\big |}\leq 2\epsilon + C-\frac{1}{C},
\end{equation}
If the control function value ${\big |}r^{l-1}_i(l-1,x){\big |}\in \left(0,C\right] $, then an upper bound for the control function is
\begin{equation}
{\big |}r^{l}_i(l,x){\big |}\leq 2\epsilon + C\leq 2C.
\end{equation}
In any case we have the strict upper bound $C^2$ for the control function after two time steps in the case considered. Such considerations lead to the observation that the moduli of the control functions $r^l_i$ have upper bounds $C^2$ after finitely many time steps, where for each $x\in {\mathbb R}^n$ the function $l\rightarrow v^{r,*,\rho,l}_i(l,x)+r^l_i(l,x)$ has the tendency to decrease in the sense that for each $l\geq 1$ there is a $m\geq 1$ such that
\begin{equation}
{\big |}v^{r,*,\rho,l}_i(l+m,x)+r^l_i(l+m,x){\big |} < {\big |}v^{r,*,\rho,l}_i(l,x)+r^l_i(l,x){\big |}.
\end{equation}
This leads to to the conclusion that the controlled velocity functions $v^{r,*,\rho,l}_i$ and the control functions $r^l_i$ both have uniform global upper bounds such that the the functions $v^{*,\rho,l}_i$ also have uniform upper bounds. This then holds for the original velocity function components $v_i$ as well. An analogous reasoning leads to the same conclusion for the spatial derivatives of the control functions and the controlled velocity functions up to order $m$, where $m$ is the order of derivatives for which the local contraction result holds. As we argued we have such a local contraction result for any order of derivatives, and the existence of uniform globally bounded smooth solutions for the Cauchy problem is a consequence. Furthemore, the scheme with small foresight is also appropriate in order to investigate the long time behavior of solutions. It seems that for the Cauchy problem on the whole space and a class of smooth data with polynomial decay and strictly positive viscosity $\nu >0$ we have a decay of the velocity functions to zero at infinity.

The observations above with small foresight have the advantage that we can study long time behavior, but there is the question whether the observations of an uniform global upper bound can be obtained with the simpler control function proposed above as well.  
Next we make further observations concerning global bounds of simpler control functions.
At the end of section 3 we have defined a simplified form of the global scheme via value functions $v^{r^{l-1},*,\rho,l,k}_i$ which describes the solutions of the local iteration scheme with controlled function data $v^{r,*,\rho,l-1}_i(l-1,.)$. This has the advantage that we can study the local limit $v^{r^{l-1},*,\rho,l-1}_i=\lim_{k\uparrow \infty}v^{r^{l-1},*,\rho,l,k}_i$ on $[l-1,l]\times {\mathbb R}^n$ similarly as the local limit of the uncontrolled scheme - the structure of the equations is the same  (cf. the remarks at (\ref{Navleray*})  at the end of section 3 of this paper). We can get the controlled function values of step $l$ by 
\begin{equation}
v^{r,*,\rho,l}_i=v^{r^{l-1},*,\rho,l}_i+\delta r^l_i.
\end{equation} 
In order to obtain global existence it is essential to get a global upper bound of the Leray projection term which may have linear growth with respect to the time step number $l\geq 1$. We may use the simple control functions which satisfy
\begin{equation}\label{deltargb1}
\delta r^{l}_i(\tau,x)=\int_{l-1}^{\tau}\frac{1}{C}\left(-v^{r,*,\rho,l-1}_i(l-1,y)\right)G_l(\tau-s,x-y)ds 
\end{equation}
or control functions which satisfy
\begin{equation}\label{deltargb2}
\delta r^{l}_i(\tau,x)=-\delta v^{r^{l-1},*,\rho,l,1}_i
\end{equation}
or linear combinations of (\ref{deltargb1}) and (\ref{deltargb2}) for this task. In any case we get a scheme with a global uniform or linear upper bound for the controlled value functions $v^{r,*,\rho,l}_i$ and a global linear upper bound for the control functions $r^l_i$. We shall consider this in the next section in more detail. Simple control functions as in (\ref{deltargb1}) and (\ref{deltargb2}) simplify the analysis. However an extended control function may be used to sharpen the results and obtain global upper bounds of regular solutions which do not depend (even linearly) on time. We analyze this problem in this section where we may use the analysis of the preceding section where we showed that a simpler analysis in the context of a simple control function is sufficient in order to obtain a global scheme and global regular solutions via global linear upper bounds in strong norms. We will start the analysis with some remarks further about simple control function schemes (which are sufficient in order to obtain global regular existence and are considered further in the next section), and then we consider variations and extensions of the arguments in the case of extended control functions.
Note again that we consider possible choices of the control function increments $\delta r^l_i$ which have to be chosen once at each time step. As the increments 
\begin{equation}
\delta v^{r^{l-1},*,\rho,l,k}_i=v^{r^{l-1},*,\rho,l,k}_i-v^{r^{l-1},*,\rho,l,k-1}_i
\end{equation}
satisfy equations which have the same structure as the equations for the increments $\delta v^{*,\rho,l,k}_i$ of the uncontrolled local scheme we simplify our notation and write
\begin{equation}\label{controllocalb}
v^{r^{l-1},*,\rho,l,k}_i=v^{r,*\rho,l-1}_i(l-1,.)+\sum_{p=1}^{k}\delta v^{r^{l-1},*,\rho,l,p}_i=:v^{r,*\rho,l-1}_i(l-1,.)+\sum_{p=1}^{k}\delta v^{*,\rho,l,p}_i
\end{equation}
where the increments $\delta v^{*,\rho,l,p}_i$ in (\ref{controllocalb}) are not to be confused with the increments of the local scheme but are just an abbreviation of $\delta v^{r^{l-1},*,\rho,l,p}_i$. This slight abuse of language may be justified by the simplification of notation and by the fact that both types of increments satisfy equations with the same structure (but different in the respect that they are induced by different initial data obtained at the previous time step or given).

   The procedure described makes it possible to use the local analysis and get a local limit by the local contraction results considered in the previous sections. If we denote this local limit by $v^{r^{l-1},*,\rho,l}_i$, then we may define $v^{r,*,\rho,l}_i=v^{r^{l-1},*,\rho,l}_i+\delta r^l_i$, where the control function increments $r^l_i-r^{l-1}_i,~1\leq i\leq n$ are chosen once at time step $l\geq 1$ such that the scheme becomes global. Since the local analysis does not depend of the control function such that only the size of the data $v^{r,*,\rho.l-1}_i(l-1,.)$ has influence on the the local scheme and for simplicity of notation we sometimes suppress the superscript $r^{l-1}$ in $v^{r^{l-1},*,\rho,l,k}_i$ and just write $v^{*,\rho,l,k}_i$. We have to keep in mind that the latter approximations are not independent of the control function $r^l_i$, certainly.

For a moment we may forget the control function 
and reconsider the inductive construction of local regular solutions on $[l-1,l]\times {\mathbb R}^n$ ($n=3$ the most intersting case) by the local scheme above (cf. also the discussion of global linear bounds of the Leray projection term below and in \cite{KHyp} for th following). At each time step $l\geq 1$ having constructed $v^{*,\rho,l-1}_i(l-1,.)\in C^m\cap H^m$ for $m\geq 2$ at time step $l-1$ (at $l=1$ these are just the initial data $h_i$),
as a consequence of the argument above for $m\geq 2$ we a have a time-local pointwise limit $v^{*,\rho,l}_i(\tau,.)=v^{*,\rho,l-1}_i(\tau,.)+\sum_{k=1}^{\infty}\delta v^{*,\rho,l,k}_i(\tau,.)\in H^m$ for all $1\leq i\leq n$, where for $n=3$ we have $H^2\subset C^{\alpha}$ uniformly in $\tau\in [l-1,l]$. Furthermore the functions of this series are even locally continuously differentiable with respect to $\tau\in [l-1,l]$ and hence H\"{o}lder continuous with respect to time.  Note that the first order time derivative $\frac{\partial}{\partial \tau}v^{*,\rho,l,k}_i(\tau,.)$ of these members exist in $H^m$ as well for natural $m>\frac{5}{2}$ as we proved in the previous section as a consequence of the product rule for Sobolev spaces. We observed that $v^{*,\rho,l,k}_i\in H^m\times H^{2m}$ can be obtained inductively for each $m$ and this leads to full local regularity of the limit function of the local scheme.
If we plug in the approximating function $v^{*,\rho,l,k}_i(\tau,.)$ into the local incompressible Navier-Stokes equation in the Leray projection form, then we have
\begin{equation}\label{Navlerayk}
\begin{array}{ll}
\frac{\partial v^{*,\rho,l,k}_i}{\partial \tau}-\rho_l\nu\sum_{j=1}^n \frac{\partial^2 v^{*,\rho,l,k}_i}{\partial x_j^2} 
+\rho_l\sum_{j=1}^n v^{*,\rho,l,k}_j\frac{\partial v^{*,\rho,l,k}_i}{\partial x_j}\\
\\
-\rho_l\sum_{j,m=1}^n\int_{{\mathbb R}^n}\left( \frac{\partial}{\partial x_i}K_n(x-y)\right) \sum_{j,m=1}^n\left( \frac{\partial v^{*,\rho,l,k}_m}{\partial x_j}\frac{\partial v^{*,\rho,l,k}_j}{\partial x_m}\right) (\tau,y)dy,\\
\\
=+\rho_l\sum_{j=1}^n \delta v^{*,\rho,l,k}_j\frac{\partial v^{*,\rho,l,k}_i}{\partial x_j}\\
\\
-\rho_l\sum_{j,m=1}^n\int_{{\mathbb R}^n}\left( \frac{\partial}{\partial x_i}K_n(x-y)\right) \sum_{j,m=1}^n\left( \frac{\partial v^{*,\rho,l,k}_m}{\partial x_j}\frac{\partial v^{*,\rho,l,k}_j}{\partial x_m}\right) (\tau,y)dy\\
\\
+\rho_l\sum_{j,m=1}^n\int_{{\mathbb R}^n}\left( \frac{\partial}{\partial x_i}K_n(x-y)\right) \sum_{j,m=1}^n\left( \frac{\partial v^{*,\rho,l,k-1}_m}{\partial x_j}\frac{\partial v^{*,\rho,l,k-1}_j}{\partial x_m}\right) (\tau,y)dy.
\end{array}
\end{equation}
As $k\uparrow \infty$ the right side of (\ref{Navlerayk}) becomes
\begin{equation}\label{Navleraykterm}
\begin{array}{ll}
\rho_l\sum_{j=1}^n \delta v^{*,\rho,l,k}_j\frac{\partial v^{*,\rho,l,k}_i}{\partial x_j}+\rho_l\sum_{j=1}^n  v^{*,\rho,l,k}_j\frac{\partial \delta v^{*,\rho,l,k}_i}{\partial x_j}\\
\\
\rho_l\int_{l-1}^{\tau}\int_{{\mathbb R}^n}\int_{{\mathbb R}^n}\left( \frac{\partial}{\partial x_i}K_n(z-y)\right)\times\\
\\
\times \left( \sum_{m,j=1}^n \frac{\partial \delta v^{*,\rho,l,k-1}_j}{\partial x_m}\left(\frac{\partial v^{*,\rho,l,k-1}_m}{\partial x_j}+\frac{\partial v^{*,\rho,l,k-2}_m}{\partial x_j}\right)\right)  (s,y)\times\\
\\
\times G_l(\tau-s,x-z)dydzds,
\end{array}
\end{equation}
and since  $\lim_{k\uparrow\infty}\delta v^{*,\rho,l,k}_j(\tau,x)=0$ and $\lim_{k\uparrow \infty}\frac{\partial \delta v^{*,\rho,l,k}_i}{\partial x_j}=0$ for all $(\tau,x)\in [l-1,l]\times {\mathbb R}^n$ pointwise by our local contraction result, and since $v^{*,\rho,l,k}_j$ and $\frac{\partial \delta v^{*,\rho,l,k}_i}{\partial x_j}$ are uniformly bounded, we observe that the right side expressed in (\ref{Navleraykterm}) goes to zero pointwise such that 
\begin{equation}
\lim_{k\uparrow \infty}v^{*,\rho,l,k}_i\in C^{1,2}\left([l-1,l]\times {\mathbb R}^n\right) ,~1\leq i\leq n
\end{equation}
satisfies the local Navier-Stokes equation pointwise, and in a classical sense. Higher regularity can be obtained then considering equations for the derivatives.   
Finally in order to show that the Leray projection term is globally linearly bounded, it suffices to show that the {\it squared } function
\begin{equation}
l\rightarrow |v^{*,\rho,l}_j(l,.)|^2_{H^2}
\end{equation}
grows linearly with the time step number $l$, or to show that a controlled scheme
\begin{equation}
l\rightarrow |v^{r,*,\rho,l}_j(l,.)|^2_{H^2}
\end{equation}
with linear bounded control functions $r_i$ have linear growth with respect to the time step number $l$.
 Now, let us set up a controlled scheme which we already sketched in the intoduction (similar as in \cite{KNS} and in \cite{K3}), i.e., let us consider a scheme
\begin{equation}
v^{r,*,\rho,l,k}_j=v^{*,\rho,l,k}_j+r^l_j
\end{equation}
for some functions $r^l_j$ which have a uniform upper bound with respect to some regular norm $|.|_{C^m\left((l-1,l), H^{2m}\right)}$. Especially this upper bound should be independent of the time step number $l$. We shall see that it is possible to simplify the control function considered in \cite{KNS} and \cite{K3}. We have considered this simplification in \cite{KB2} for the scheme considered there, and reconsider these considerations for the simplified scheme of this paper. Further simplifications are possible if the domain is a torus (cf. \cite{KIF2}). However the latter simplification seems to be related to the formulation in terms of Fourier modes, and it seems not possible to do the same on the whole domain.
Let us consider the uncontrolled scheme  for a moment in order to analyze the local growth. The considerations transfer easily to the controlled scheme. Observe that the contraction results allow us to estimate the higher correction terms
\begin{equation}\label{ldiff}
v^{*,\rho,l}_j-v^{*,\rho,l-1}_j(l-1,.)=\sum_{k=1}^{\infty}\delta v^{*,\rho,l,k}_j
\end{equation}
in the local functional series on $[l-1,l]\times {\mathbb R}^n$ at time step $l\geq 1$ in terms of the function
\begin{equation}\label{firstdiff}
v^{*,\rho,l,1}_j-v^{*,\rho,l-1}_j(l-1,.),
\end{equation}
i.e., the growth behavior of the latter function in (\ref{firstdiff}) determines the growth behavior of the former function (\ref{ldiff}) with respect to the relevant norm by the related contraction result. 
\begin{rem}
Note that in this paper for each time step number $l\geq 1$ we start with substeps $k\geq 1$ and use the convention
\begin{equation}
v^{*,\rho,l,0}_j(l-1,.)=v^{*,\rho,l-1}_j(l-1,.).
\end{equation}
In previous papers we started with $k\geq 0$ and used a similar convention with $k=-1$ instead of $k=0$.
\end{rem}
Assume for a moment that we can prove that
\begin{equation}\label{bound0}
\max_{1\leq j\leq n}\sup_{\tau\in[l-1,l]}{\Big |}v^{*,\rho,l,1}_j(\tau,.)-v^{*,\rho,l-1}_j(l-1,.){\Big |}_{H^2}\precsim \frac{1}{\sqrt{l}},
\end{equation}
and assume inductively that
\begin{equation}
\max_{j\in \left\lbrace1,\cdots ,n\right\rbrace }|v^{*,\rho,l-1}_j(l-1,.)|^2_{H^2}\leq C+(l-1)C,
\end{equation}
which holds for $l=1$ and for $\max_{j\in \left\lbrace1,\cdots ,n\right\rbrace }|h_j|_{H^2}^2$ for an appropriate constant for sure. Then assuming $C\geq 4$ w.l.o.g. we have
\begin{equation}
\max_{j\in \left\lbrace1,\cdots ,n\right\rbrace }|v^{*,\rho,l-1}_j(l-1,.)|_{H^2}\leq \sqrt{C+(l-1)C}\leq \frac{1}{2}(C+(l-1)C).
\end{equation}
Now we have for some $m\geq 2$ and $k\geq 2$
\begin{equation}\label{vrepineq7gproof}
\begin{array}{ll}
|\delta v^{*,\rho,l,k}_{i}|^2_{C^m\left((l-1,l), H^{2m}\right)} 
\leq \frac{1}{4}\max_{j\in \left\lbrace 1,\cdots ,n\right\rbrace }
{\big |} \delta v^{*,\rho,l,k-1}_j {\big |}^2_{C^m\left((l-1,l), H^{2m}\right)},
\end{array}
\end{equation}
where we may choose $\rho_l\sim\frac{1}{l}$ while for an appropriate choice of the constant factor (which transforms $\sim$ to $=$) we have
\begin{equation}
|v^{*,\rho,l,k}_i|_{C^m\left((l-1,l), H^{2m}\right)} \leq C^l_{k-1}\leq (\sqrt{C+C(l-1)}+1) \mbox{for all $k$}.
\end{equation}
Assuming that we can realize the bound
\begin{equation}\label{bound0a}
\max_{1\leq j\leq n}\sup_{\tau\in[l-1,l]}{\Big |}v^{*,\rho,l,1}_j-v^{*,\rho,l-1}_j(l-1,.){\Big |}_{C^m\left((l-1,l), H^{2m}\right)}\leq \frac{1}{2C\sqrt{l}},
\end{equation}
with a finite constant $C>0$ independent of the time step number $l\geq 1$ we can choose $\rho_l$ such that we have indeed a contraction estimate with contraction constant $\frac{1}{2C\sqrt{l}}$. We get
\begin{equation}
\begin{array}{ll}
\max_{j\in \left\lbrace1,\cdots ,n\right\rbrace }|v^{*,\rho,l}_j|_{C^m\left((l-1,l), H^{2m}\right)}\\
\\
\leq \max_{j\in \left\lbrace1,\cdots ,n\right\rbrace }|v^{*,\rho,l-1}_j(l-1,.)+\sum_{p=1}^{\infty} \delta v^{*,\rho,l,p}_j|_{C^m\left((l-1,l), H^{2m}\right)}\\
\\
\leq \max_{j\in \left\lbrace1,\cdots ,n\right\rbrace }|v^{*,\rho,l-1}_j(l-1,.)|_{C^m\left((l-1,l), H^{2m}\right)}+\sum_{k=1}^{\infty}| \delta v^{*,\rho,l-1,k}_j|_{C^m\left((l-1,l), H^{2m}\right)}\\
\\
\leq \sqrt{C+(l-1)C}+\frac{1}{C\sqrt{l}}.
\end{array}
\end{equation}
Hence,
\begin{equation}
\begin{array}{ll}
\max_{j\in \left\lbrace1,\cdots ,n\right\rbrace }|v^{*,\rho,l}_j(l,.)|^2_{C^m\left((l-1,l), H^{2m}\right)}\\
\\
\leq (\sqrt{C+(l-1)C}+\frac{1}{C\sqrt{l}})^2\\
\\
=C+(l-1)C+\sqrt{C+(l-1)C}\frac{1}{C\sqrt{l}}+\frac{1}{C^2l}
\leq C+lC.
\end{array}
\end{equation}
Hence, if we can realize the estimate (\ref{bound0a}), 
 then the scheme is global. 
 Equivalently, assuming that for a globally linearly bounded control function $\mathbf{r}$ we can realize the bound
\begin{equation}\label{bound0ar}
\max_{1\leq j\leq n}\sup_{\tau\in[l-1,l]}{\Big |}v^{r,*,\rho,l,1}_j-v^{r,*,\rho,l-1}_j(l-1,.){\Big |}_{C^m\left((l-1,l), H^{2m}\right)}\leq \frac{1}{2C\sqrt{l}},
\end{equation}
with a finite constant $C>0$ independent of the time step number $l\geq 1$, we can choose $\rho_l$ such that we have indeed a contraction estimate with contraction constant $\frac{1}{2C\sqrt{l}}$. We get
\begin{equation}
\begin{array}{ll}
\max_{j\in \left\lbrace1,\cdots ,n\right\rbrace }|v^{r,*,\rho,l}_j|_{C^m\left((l-1,l), H^{2m}\right)}\\
\\
\leq \max_{j\in \left\lbrace1,\cdots ,n\right\rbrace }|v^{r,*,\rho,l-1}_j(l-1,.)+\sum_{p=1}^{\infty} \delta v^{r,*,\rho,l,p}_j|_{C^m\left((l-1,l), H^{2m}\right)}\\
\\
\leq \max_{j\in \left\lbrace1,\cdots ,n\right\rbrace }|v^{r,*,\rho,l-1}_j(l-1,.)|_{C^m\left((l-1,l), H^{2m}\right)}+\sum_{k=1}^{\infty}| \delta v^{r,*,\rho,l-1,k}|_{C^m\left((l-1,l), H^{2m}\right)}\\
\\
\leq \sqrt{C+(l-1)C}+\frac{1}{C\sqrt{l}}.
\end{array}
\end{equation}
Hence,
\begin{equation}
\begin{array}{ll}
\max_{j\in \left\lbrace1,\cdots ,n\right\rbrace }|v^{r,*,\rho,l}_j(l,.)|^2_{C^m\left((l-1,l), H^{2m}\right)}\\
\\
\leq (\sqrt{C+(l-1)C}+\frac{1}{C\sqrt{l}})^2\\
\\
=C+(l-1)C+\sqrt{C+(l-1)C}\frac{1}{C\sqrt{l}}+\frac{1}{C^2l}
\leq C+lC.
\end{array}
\end{equation}
Hence, if we can realize the estimate (\ref{bound0ar}), then the scheme is global. The most simple choice in this context may be the choice of $\delta r^l_i=-\delta v^{r^{l-1},*,\rho,l,1}_i=:\delta v^{r^{l-1},*,\rho,l,1}_i$, where we recall that the latter notation is just an abbreviation (cf. the remarks at the beginning of this section). We considered such a possibility in the preceding section.  Other possibilities are considered in \cite{K3, KNS,KHyp}. Next we extend the scheme for  $v^{*,\rho,l,k}_j$ to a controlled scheme for
 \begin{equation}
 v^{r,*,\rho,l,k}_j=v^{r^{l-1},*,\rho,l,k}_j+\delta r^l_j,
 \end{equation}
where
\begin{equation}\label{controll1}
\begin{array}{ll}
r^{l}_j-r^{l-1}_j=-\left( v^{r^{l-1},*,\rho,l,1}_j-v^{r^{l-1},*,\rho,l-1}_j(l-1,.)\right)\\
\\
+\int_{l-1}^{\tau}\phi^{l}_j(s,y)G_l(\tau-s,x-y)dyds.
\end{array}
\end{equation}
In the preceding section we showed that the first term on the right side of (\ref{controll1}), i.e., the choice with $\phi^l_j\equiv 0$ is indeed sufficient in order to obtain a global linear bound for the Leray projection term. In this section we want to construct an upper bound for a controlled Navier Stokes equation system which is independent of the time step number $l\geq 1$. In order to do this we choose additional growth consumption functions. 
We choose a source term in (\ref{controll1}) is of the form
\begin{equation}\label{sourceleray}
\phi^{l}_j(s,y)=-\frac{v_j^{r,\rho,l-1}}{C}(l-1,.)-\frac{r^{l-1}}{C^2}(l-1,.).
\end{equation}
The idea of the latter choice is that for $\rho_l\lesssim\frac{1}{C^4}$ the increments
\begin{equation}\label{deltaremark}
\begin{array}{ll}
\delta v^{r,*,\rho,l}_i=v^{r,*,\rho,l}_i-v^{r,*,\rho,l-1}_i(l-1,.)\\
\\
=v^{r^{l-1},*,\rho,l}_i+\delta r^l_i-v^{r^{l-1},*,\rho,l-1}_i(l-1,.)\\
\\
=v^{r,*,\rho,l,1}_i+\sum_{k\geq 2}\delta v^{*,\rho,l,k}_i-v^{r,*,\rho,l-1}_i(l-1,.)
\end{array}
\end{equation}
have upper bounds in strong norms such that the growth of the controlled velocity function can be shown to be absorbed by the larger damping effect of the source function part $-\frac{v_j^{r,\rho,l-1}}{C}(l-1,.)$.
\begin{rem}\label{notrem}
Note again that in (\ref{deltaremark}) we use $\sum_{k\geq 2}\delta v^{*,\rho,l,k}_i$ as synonymous for $\sum_{k\geq 2}\delta v^{r^{l-1},*,\rho,l,k}_i$. 
\end{rem}
In the preceding section we proved that for the simple control function
\begin{equation}\label{controll1extra2}
\begin{array}{ll}
\delta r^{l,0}_j=-\delta v^{r^{l-1},*,\rho,l,1}_j
\end{array}
\end{equation}
there is a global linear bound (in strong norms) for the controlled value function and for the control function itself. This is sufficient for global regular existence. We also considered the alternative control functions (considered also in \cite{KNS,K3} essentially) 
\begin{equation}\label{controll1extra3}
\begin{array}{ll}
\delta r^{l,1}_j=\int_{l-1}^{\tau}\left( -\frac{v_j^{r,\rho,l-1}}{C}(l-1,y)\right) G_l(\tau-s,x-y)dyds.
\end{array}
\end{equation}
Let us recall why the controlled scheme for $v^{r_1,*,\rho,l}_i:=v^{*,\rho,l}_i+r^{l,1}_i$ and with $r^{l,1}_i=r^{l-1,1}_i(l-1,.)+\delta r^{l,1}_i$ (starting with some appropriate control at time step $l=1$) leads to a global  bound for the controlled value functions $v^{r_1,*,\rho,l}_i$ and for a global linear bound for the control function $r^{l,1}_i$ (both in strong norms), and, hence, to global regular solutions as well.
Assume that we have data
\begin{equation}
{\big |}D^{\alpha}_xv^{r_1,*,\rho,l-1}_i(l-1,.){\big |}\leq C
\end{equation}
for multivariate derivatives up to order $|\alpha|\leq m$, and where $|f(.)|$ is short for the supremum norm of a function $f$. Consider the case $\alpha=0$ first. We have
\begin{equation}
v^{r_1,*,\rho,l}_i(l,.)=v^{r_1,*,\rho,l-1}_i(l-1,.)+\sum_{k=1}^{\infty}\delta v^{r^{l-1,1},*,\rho,l,k}_i(l,.)+\delta r^{l,1}_i(l,.).
\end{equation}
Note that in this case the sum of increments starts with $k=1$.
For small time step size $\rho_l$ the first increment $\delta v^{r^{l-1,1},*,\rho,l,k}_i(l,.)$ becomes small as does the sum of the moduli of the higher order terms according to the local contraction result such that (for example)
\begin{equation}
{\big |}\sum_{k=1}^{\infty}\delta v^{r^{l-1,1},*,\rho,l,k}_i(l,.){\big |}\leq \frac{1}{4}.
\end{equation}
Now if ${\big |}v^{r_1,*,\rho,l-1}_i(l-1,.){\big |}\in \left[0,\frac{C}{2}\right]$ then we have 
\begin{equation}
{\big |}v^{r_1,*,\rho,l}_i(l,.){\big |}\leq {\big |}v^{r_1,*,\rho,l-1}_i(l-1,.){\big |}+\frac{1}{4}\leq C,
\end{equation}
while for ${\big |}v^{r_1,*,\rho,l-1}_i(l-1,.){\big |}\in \left[\frac{C}{2};C\right]$ we have 
\begin{equation}
{\big |}v^{r_1,*,\rho,l}_i(l,.){\big |}\leq {\big |}v^{r_1,*,\rho,l-1}_i(l-1,.){\big |}+\frac{1}{4}-\frac{1}{2}-\epsilon\leq C,
\end{equation}
where for small time step size there is $\epsilon >0$ such that
\begin{equation}
|\delta r^{l,1}_i(l,.)|\geq \frac{1}{2}-\epsilon
\end{equation}
since we have small diffusion for small time step size. Hence, the upper bound $C$ is preserved from time step number $l-1$ to time step number $l$ for $v^{r_1,*,\rho,l}_i(l-1,.)$.
This upper bound is independent of the time step number $l\geq 1$, and this implies that
\begin{equation}
 |r^{l,1}_i(l,.)|\leq |r^{1,1}_i(0,.)+\sum_{p=1}^l\delta r^{p,1}_i|\leq |r^{0}_i(0,.)|+l
\end{equation}
which leads to a linear bound of the control function. At time step $l=1$ an example of an appropriate choice for the control function is
\begin{equation}
r^{0,1}_i(0,.)=\frac{h_i(.)}{C}.
\end{equation}
Note that for $\alpha>0$ we can write
\begin{equation}\label{controll1extra3}
\begin{array}{ll}
D^{\alpha}_x\delta r^{l,1}_j=\int_{l-1}^{\tau}\left( -\left( D^{\alpha}_x\frac{v_j^{r,\rho,l-1}}{C}(l-1,y)\right)\right)  G_l(\tau-s,x-y)dyds,
\end{array}
\end{equation}
since we have inductively regularity with spatial decay and we can apply the convolution rule. Since we have a contraction result for derivatives of value functions as well it follows that the argument above can repeated for multivariate derivatives and we get a global bound $C>0$ (indpendent of the time step number $l\geq 1$ for the controlled functions $D^{\alpha}_xv^{r_1,*,\rho,l}_i$ and a linear global bound (with repsect to the time step number $l\geq 1$) of the multivariate derivatives of the control function $D^{\alpha}_xr^l_i$.
This implies the existence of a global regular solution.
The argument above can be repeated for the control function
\begin{equation}\label{controll1extra12}
\begin{array}{ll}
\delta r^{l}_j=-\delta v^{r^{l-1},*,\rho,l,1}_j\\
\\
+\int_{l-1}^{\tau}\left( -\frac{v_j^{r,\rho,l-1}}{C}(l-1,y)\right) G_l(\tau-s,x-y)dyds,
\end{array}
\end{equation}
of course. Finally, we mention that the sharper result of global uniform upper bounds can be obtained even using the more elaborate but simpler control function in (\ref{controll1extra}), i.e., we can eliminate the linear dependence on the time step number $l\geq 1$ even for this simple control function. As the linear upper bound for the controlled value functions and the control functions together with the local contraction results are sufficient in order to prove global regular (smooth) existence, this additional is just an alternative way to prove global uniform upper bounds. Therefore, we close this section with a few remarks and come back to this issue elsewhere, when we analyze long time behavior. Consider again the control function increment  
  \begin{equation}\label{controll1incrementrep}
\begin{array}{ll}
\delta r^l_i(\tau,x)=r^{l}_j(\tau,.)-r^{l-1}_j(l-1,.)=-\left( v^{r^{l-1},*,\rho,l,1}_j-v^{r^{l-1},*,\rho,l-1}_j(l-1,.)\right)\\
\\
+\int_{l-1}^{\tau}\left( -\frac{v_j^{r,\rho,l-1}}{C}(l-1,.)-\frac{r^{l-1}}{C^2}(l-1,.)\right) G_l(\tau-s,x-y)dyds.
\end{array}
\end{equation}
Having computed the control function $r^{l-1}_i(l-1,.)$ and the controlled velocity functions and assume again that
\begin{equation}\label{upperboundl-1}
{\big |}r^{l-1}_i(l-1,.){\big |}\leq C^2,~{\big |}v^{r,*,\rho,l-1}_i(l-1,.){\big |}\leq C
\end{equation}
for all $1\leq i\leq n$. 
Next compare this scheme with this control function with  schemes with the control function
  \begin{equation}\label{controll1incrementrep}
\begin{array}{ll}
\delta r^{l,0}_i(\tau,x)=r^{l,0}_j(\tau,.)-r^{l-1,0}_j(l-1,.)=-\left( v^{r^{l-1,0},*,\rho,l,1}_j-v^{r^{l-1,0},*,\rho,l-1}_j(l-1,.)\right)\\
\\
+\int_{l-1}^{\tau}\left( -\frac{v_j^{r_0,\rho,l-1}}{C}(l-1,.)\right) G_l(\tau-s,x-y)dyds.
\end{array}
\end{equation}
As we have global linear bounds for the controlled velocity functions  $v^{\rho,*,l}_i,~1\leq i\leq n$ and for the control functions $r^{l}_i,~1\leq i\leq n$ we have a global linear bound for the Navier Stokes solution function  
\begin{equation}
v^{*,\rho,l}_i=v^{r,*,\rho,l}_i-r^l_i=v^{r_0,*,\rho,l}_i-r^{l,0}_i,~1\leq i\leq n
\end{equation}
Note that these are two representations of a global regular solution function of the incompressible Navier Stokes equation. For the simpler scheme we have the upper bounds
\begin{equation}
{\big |}v^{r_0,*,\rho,l}_i(l,.){\big |}\leq C
\end{equation}
and (with the choice ${\big |}r^{0}_i(0,.){\big |}={\big |}r^{0,0}_i(0,.){\big |}={\Big |}\frac{h_i}{C}{\Big |}\leq 1$) we have 
\begin{equation}
{\big |}r^{l,0}_i(l,.){\big |}\leq 1+l.
\end{equation}
An alternative to obtain global uniform upper bounds is then by estimating the difference of both schemes.

\section{Final remarks}
Let us first summarize the ideas. In order to get a uniform global bound for the controlled velocity functions $v^{r,*,\rho,l}_i$ and a  linear bound in time for the control functions $r^l_i$ it is sufficient to define control functions recursively time step by time step
with the increment
\begin{equation}\label{controll1app}
\begin{array}{ll}
\delta r^l_i=r^{l}_j-r^{l-1}_j=
\int_{l-1}^{\tau}\frac{-v^{r,*,\rho,l-1}_i}{C}(l-1,y)G_l(\tau-s,x-y)dyds.
\end{array}
\end{equation}
These control functions ensure that the upper bound for the controlled velocity value functions is given by some $C$ where $r^1_i$ may be chosen proportional to the initial data (with the same signs, i.e. a positive proportionality constant for example), and the initial data have $C$ as an upper bound in suitable strong norms (depending on the regularity we want to achieve). It is then clear that the increments $\delta r^l_i$ in (\ref{controll1app}) are bounded by $\sim 1$ and we have a linear bound $\sim l$ of the control functions $r^l_i$. This is sufficient in order obtain global regular solutions for the incompressible Navier Stokes equation, where the regularity which can be obtained by the scheme equals the order $m$ of the local contraction result. Spatial dependence of the local solution functions for fixed time has a regularity in $H^m\cap C^m_0$, and this leads to classical regularity with respect to time as well. An alternative in order to obtain a global upper bound (linear with respect to the time step number $l$) are the control function increments of the form
\begin{equation}
\delta r^l_i=r^{l}_j-r^{l-1}_j=-\left( v^{*,\rho,l,1}_j-v^{*,\rho,l-1}_j(l-1,.)\right).
\end{equation}
In this case we get a linear bound for the controlled value functions and a linear bound for the control function. In addition extended control functions $r^l_i$ are defined recursively via the equation for the increment
\begin{equation}\label{controll1app1}
\begin{array}{ll}
\delta r^l_i=r^{l}_j-r^{l-1}_j=-\left( v^{*,\rho,l,1}_j-v^{*,\rho,l-1}_j(l-1,.)\right)\\
\\
+\int_{l-1}^{\tau}\phi^{l}_j(s,y)G_l(\tau-s,x-y)dyds.
\end{array}
\end{equation}
More explicitly we have, that
\begin{equation}\label{controll1extra}
\begin{array}{ll}
\delta r^{l}_{j}(l,x)=-\delta v^{r^{l-1},*,\rho,l,1}_j(l,x)\\
\\
+\int_{l-1}^{l}\left( -\frac{v_j^{r,*,\rho,l-1}(l-1,s)+\frac{1}{C}r^{l-1}_j(l-1,y) }{C}(l-1,y)\right) G_l(l-y,x-y)dyds.
\end{array}
\end{equation} 
In this definition of extended control functions the increment $$-\left( v^{*,\rho,l,1}_j-v^{*,\rho,l-1}_j(l-1,.)\right)$$ ensures that we can represent the local solution in the form
\begin{equation}
\begin{array}{ll}
v^{r,*,\rho,l}_i=v^{r,*,\rho,l-1}_i(l-1,.)+\sum_{j=1}^{\infty}\delta v^{r,*,\rho,l,k}_i\\
\\
=v^{r,*,\rho,l-1}_i(l-1,.)+\sum_{j=2}^{\infty}\delta v^{r,*,\rho,l,k}_i\\
\\
=v^{r,*,\rho,l-1}_i(l-1,.)+\sum_{j=2}^{\infty}\delta v^{*,\rho,l,k}_i\\
\\
:=v^{r,*,\rho,l-1}_i(l-1,.)+\sum_{j=2}^{\infty}\delta v^{r^{l-1},*,\rho,l,k}_i,
\end{array}
\end{equation}
where the definition of $\delta v^{*,\rho,l,k}_i=\delta v^{r^{l-1},*,\rho,l,k}_i$ reminds us that the increments depend on the initial data of the previous time step (which involves a control function), but satisfy equation for which we can apply the local analysis of the uncontrolled scheme.
This simplifies the analysis, since the definition of the control function does not depend on the substeps $k$ at each time step $l$. It follows that the representation of the approximative increments
\begin{equation}\label{deltafin}
v^{r,*,\rho,l,k}_i-v^{r,*,\rho,l-1}_i(l-1,.)=\sum_{j=2}^{k}\delta v^{*,\rho,l,m}_i.
\end{equation}
involves only product terms with factors of velocity value function and its spatial derivatives of the previous substeps. We mentioned that the latter equation involves a simplified notation and what we mean by (\ref{deltafin}) is 
\begin{equation}
v^{r,*,\rho,l,k}_i-v^{r,*,\rho,l-1}_i(l-1,.)=\sum_{j=2}^{k}\delta v^{r^{l-1},*,\rho,l,m}_i.
\end{equation}
This feature of the representation makes the inheritance of spatial polynomial decay of a certain order of the scheme easy to prove, and it shows that the scheme can be generalized to situations with more general type diffusions, especially variable viscosity and Navier Stokes equations on manifolds. The growth control function $\phi^l_i$ in (\ref{controll1app1}) controls the growth in time as we have seen in the previous section. However, if we do not add a growth control function then we can still obtain a global upper bound of the Leray projection term which grows linearly with respect to the time step number.
Finally, in this latest version of the paper we have added a more sophisticated control function with small foresight which is suitable in order to prove the existence of uniform global upper bounds, i.e., global upper bounds which do not depend on the time step number and hold for the controlled velocity function and for the control function as well. An analysis of many but finitely many cases shows that the control function  and the controlled velocity function have a tendency to decrease strictly in appropriate norms. This approach makes an analysis of long time behavior possible, which may be done elsewhere.

\footnotetext[1]{\texttt{{kampen@wias-berlin.de}, {kampen@mathalgorithm.de}}.}

\end{document}